\DeclareMathAlphabet{\mathpzc}{OT1}{pzc}{m}{it}
\definecolor{darkblue}{rgb}{0,0,0.7}
\title{A symmetric Bloch--Okounkov theorem}
\author{Jan-Willem M. van Ittersum%
\thanks{\emph{Email}: \href{mailto:j.w.m.vanittersum@uu.nl}{j.w.m.vanittersum@uu.nl}, \newline
Mathematisch Instituut, Universiteit Utrecht, Postbus 80.010, 3508 TA Utrecht, The Netherlands, \newline
Max-Planck-Institut f\"ur Mathematik, Vivatsgasse 7, 53111 Bonn, Germany.
}}
\newcommand{\sltwo}{\mathfrak{sl}_2}
\newcommand{\sltwoz}{\mathrm{SL}_2(\z)}
\newcommand{\one}{\mathbf{1}_n}
\DeclareMathOperator{\height}{ht}
\DeclareMathOperator{\BST}{BST}
\renewcommand{\vec}{\underline}
\newcommand{\n}{\mathbb{N}}
\newcommand{\z}{\mathbb{Z}}
\newcommand{\q}{\mathbb{Q}}
\renewcommand{\r}{\mathbb{R}}
\newcommand{\SA}{\mathcal{S}}
\newcommand{\TA}{\mathcal{T}}
\newcommand{\pdv}[2]{\frac{\partial #1}{\partial #2}}
\newcommand{\cp}{\,|\,}
\theoremstyle{plain} 
\newtheorem{thm}{Theorem}[section]
\newtheorem{lem}[thm]{Lemma} 
\newtheorem{cor}[thm]{Corollary} 
\newtheorem{prop}[thm]{Proposition}
\theoremstyle{definition} 
\newtheorem{defn}[thm]{Definition}
\newtheorem{exmp}[thm]{Example}
\theoremstyle{remark}
\newenvironment{remark}
  {\pushQED{\qed}\remarkx}
  {\popQED\endremarkx}
\newcommand{\coef}{\mathpzc{B}}
\newcommand{\Eis}{G}
\newcommand{\QtoP}{\q^{\partitions}}
\newcommand{\modular}{M}
\newcommand{\quasimodular}{\widetilde{M}}
\newcommand{\partitions}{\mathscr{P}}
\newcommand{\Faulhaber}{\mathpzc{F}}
\newcommand{\Sym}{\mathrm{Sym}}
\DeclareMathOperator*{\conv}{\ast}
\newcommand{\bigconv}{\mathop{\hspace{-0.0pt}\scalebox{2.4}{\raisebox{-0.3ex}{$\ast$}}}}
\newcommand{\blok}{\odot}
\newcommand{\bigblok}{\bigodot}
\newcommand{\mob}{\mu(\alpha,\mathbf{1})}
\renewcommand{\=}{\: =\: }
\newcommand{\defis}{\: :=\: }
\newcommand{\+}{\,+\,}
\newcommand{\meno}{\,-\,}
\renewcommand{\phi}{\varphi}
\DeclareRobustCommand{\stirling}{\genfrac\{\}{0pt}{}}
\newcommand{\moller}{\mathcal{M}}
\begin{document}
\maketitle
\begin{abstract}%
The algebra of so-called shifted symmetric functions on partitions has the property that for all elements a certain generating series, called the~$q$-bracket, is a quasimodular form. More generally, if a graded algebra~$A$ of functions on partitions has the property that the~$q$-bracket of every element is a quasimodular form of the same weight, we call~$A$ a quasimodular algebra. We introduce a new quasimodular algebra~$\TA$ consisting of symmetric polynomials in the part sizes and multiplicities. 
\end{abstract}

\section{Introduction}
Partitions of integers are related in interesting ways to modular forms, starting with the observation that the generating series of partitions is closely related to the Dedekind~$\eta$-function, i.e.
\[\sum_{\lambda \in \partitions} q^{|\lambda|}\=\prod_{n>0} (1-q^n)^{-1}\=q^{1/24}\eta(\tau)^{-1} \quad \quad (q=e^{2\pi i\tau}),\]
where~$\partitions$ denotes set of all partitions and~$|\lambda|$ denotes the integer~$\lambda$ is a partition of. Another example is the occurrence of modular forms in the proof of the partition congruences which go back to Ramanujan \cite{AO01}. 

More recently, partitions were connected to (quasi)modular forms via the~$q$-bracket. Given a function~$f:\partitions\to\q$, the~\emph{$q$-bracket} of~$f$ is defined as the following power series
\begin{equation}\label{eq:qbrac}
\langle f \rangle_q \= \frac{\sum_{\lambda\in\partitions}f(\lambda) q^{|\lambda|}}{\sum_{\lambda\in\partitions} q^{|\lambda|}} \in \q[[q]].
\end{equation}
Before continuing, note that it is not surprising at all that for a well-chosen function~$f$ the~$q$-bracket~$\langle f \rangle_q$ is a quasimodular form, since it is easily seen that the map~\eqref{eq:qbrac} from~$\q^\partitions$ to~$\q[[q]]$ is surjective. What \emph{is} surprising is that one can find graded subalgebras~$A$ of~$\q^\partitions$ which~(i) are `interesting' in the sense that they have an interpretation in combinatorics, enumerative geometry or another field of mathematics and~(ii) have the property that the~$q$-bracket of a homogeneous function~$f\in A$ is quasimodular of the same weight as~$f$. 
In this case we call~$A$ a \emph{quasimodular algebra}\label{defn:quasimodularalgebra}. 
Note that the~$q$-bracket is linear but not multiplicative, so in order to show that an algebra is quasimodular, it is not sufficient to show that the~$q$-brackets of the generators of such an algebra are quasimodular. The aim of this paper is to introduce new quasimodular algebras. 

The Bloch--Okounkov theorem \cite[Theorem 0.5]{BO00} provided the first quasimodular algebra~$\Lambda^*$. Write a partition~$\lambda$ as a non-increasing sequence~$(\lambda_1,\lambda_2,\ldots)$ of non-negative integers with~$|\lambda|=\sum_{i=1}^\infty \lambda_i$ finite. The~$\q$-algebra~$\Lambda^*$ is freely generated by the so-called \emph{shifted symmetric power sums}
\begin{equation}\label{eq:qk} Q_k(\lambda) \= c_k \+ \sum_{i=1}^\infty \bigl((\lambda_i-i+\tfrac{1}{2})^{k-1}-(-i+\tfrac{1}{2})^{k-1}\bigr) \quad \quad (k\geq 2),\end{equation}
where the~$c_k$ are constants given by~$\frac{1}{x}+\sum_{k} c_k \frac{x^{k-1}}{(k-1)!} = \frac{1}{2\sinh(x/2)}$
. The function~$Q_3$ naturally occurs  in the simplest case of the Gromov--Witten theory of an elliptic curve, as discovered by Dijk\-graaf \cite{Dij95} and for which quasimodularity was proven rigorously in \cite{KZ95}. Quasimodularity of~$\Lambda^*$ is used in many recent works in enumerative geometry \cite{CMZ16, BK18, CMSZ19, MIL19,GM16}. There are many other functions in invariants of partitions which turn out to be elements of~$\Lambda^*$, for example symmetric polynomials in de modified Frobenius coordinates \cite[Eqn~19]{Zag16}; the hook-length moments \cite[Thm.\ 13.5]{CMZ16} (see \S\S\ref{sec:hlm}); central characters of the symmetric group \cite[Prop.~3]{KO94} and symmetric polynomials in the content vector of a partition \cite[Proof of Thm. 4]{KO94}. 

Previously, the Bloch--Okounkov algebra~$\Lambda^*$ and some generalisations to higher levels (see e.g. \cite{EO06, Eng17}), were the only known quasimodular algebras. However, there are many examples of functions on partitions admitting a quasimodular~$q$-bracket (and in general not belonging to~$\Lambda^*$) \cite[section 9]{Zag16}, for example the M\"oller transformation of functions with quasimodular~$q$-bracket (defined by~\cite[Eqn~45]{Zag16} and recalled in Section~\ref{sec:application}), invariants~$\mathcal{A}_P$ for every even polynomial defined in terms of the arm- and leg-lengths of a partition and the moment functions \begin{align}\label{eq:defSk}
\hspace{75pt} S_k(\lambda) \= -\frac{B_k}{2k}\+\sum_{i=1}^\infty \lambda_i^{k-1} \quad\quad\quad (k \text{ even}, B_k = k\text{th Bernoulli number})\end{align}
that also occur in the study of so-called spin Hurwitz numbers in the algebra of \emph{supersymmetric polynomials}  \cite{EOP08} (in that reference, these functions are only evaluated at strict partitions --- partitions without repeated parts --- and quasimodularity is shown for a correspondingly adapted~$q$-bracket).

In this paper, we prove the stronger result that the algebra~$\SA$ generated by these moment functions~$S_k$ is quasimodular. Moreover, besides the pointwise product of functions on partitions, we define a second associative product~$\blok$, called the \emph{induced product} as it is inherited from the product of power series. The vector space~$\mathrm{Sym}^\blok(\SA)$ generated by the elements in~$\SA$ under the induced product is strictly bigger than~$\SA$, is a quasimodular algebra for either of the two products, and has a particularly nice description in terms of functions~$T_{k,l}$ depending not only on the parts of a partition, but also on their multiplicities. Here, the \emph{multiplicity}~$r_m(\lambda)$ of parts of size~$m$ in a partition~$\lambda$ is defined as the number of parts of~$\lambda$ of size~$m$. More precisely, let~$\Faulhaber_l$ be the \emph{Faulhaber polynomial} of positive integer degree~$l$, defined by $\Faulhaber_l(n)=\sum_{i=1}^n i^{l-1}$ for all $n\in\z_{>0}$. Then,~$T_{k,l}$ is given by
\begin{equation}\label{def:Tkl0}\hspace{50pt} T_{k,l}(\lambda) \=  C_{k,l}\+\sum_{m=1}^\infty m^k \Faulhaber_l(r_m(\lambda)) \quad\quad\quad (k\geq 0, l\geq 1, k+l \text{ even})\end{equation}
with~$C_{k,l}$ a constant equal to~$-\frac{B_{k+l}}{2(k+l)}$ if~${k=0}$ or~${l=1}$ and~$0$ else. Let~$\TA$ be the algebra generated by all these~$T_{k,l}$ under the pointwise product. 

We show that~$\mathrm{Sym}^\blok(\SA)$ and~$\TA$ are algebras for the pointwise product as well as for the induced product. In fact, the expression of elements of~$\mathrm{Sym}^\blok(\SA)$ in terms of the~$T_{k,l}$ implies that~$\mathrm{Sym}^\blok(\SA)$ is a strict subalgebra of~$\TA$ (with respect to both products). Our main result is the following:
\begin{thm}\label{thm:1}
The algebras~$\mathrm{Sym}^\blok(\SA)$ and~$\TA$ are quasimodular algebras with respect to the induced product.
\end{thm}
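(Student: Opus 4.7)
The plan hinges on the defining property of the induced product: $\blok$ is set up precisely so that the $q$-bracket is multiplicative, $\langle f\blok g\rangle_q=\langle f\rangle_q\langle g\rangle_q$. Consequently, the quasimodularity of a $\blok$-subalgebra is determined by that of its $\blok$-generators: if every generator has a quasimodular $q$-bracket of the correct weight, then so does every $\blok$-monomial, since products of quasimodular forms are quasimodular with weights adding.

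For $\mathrm{Sym}^\blok(\SA)$ this is essentially immediate. By definition each element is a $\blok$-polynomial in elements of $\SA$, and by the previous paragraph its $q$-bracket factors as a product of $\langle f\rangle_q$ for $f\in\SA$. Each such factor is quasimodular of weight $\wt(f)$ by the quasimodularity of $(\SA,\cdot)$, which is established separately in the paper, and the claim follows.

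For $\TA$ the argument splits into two parts. First, $(\TA,\blok)$ must be shown to be $\blok$-generated by the $T_{k,l}$; this amounts to re-expressing every pointwise monomial in the $T_{k,l}$ as a $\blok$-polynomial in the $T_{k,l}$, which should follow from the Möbius inversion over the lattice of set partitions relating the two products. Second, each $\langle T_{k,l}\rangle_q$ must be shown to be quasimodular of weight $k+l$. Since a partition is determined by its multiplicity sequence $(r_m(\lambda))_{m\geq 1}$, the numerator $\sum_\lambda T_{k,l}(\lambda)q^{|\lambda|}$ factorises through $\prod_m(1-q^m)^{-1}$, giving
\[\langle T_{k,l}\rangle_q\=C_{k,l}\+\sum_{m\geq 1}m^k(1-q^m)\sum_{r\geq 0}\Faulhaber_l(r)\,q^{mr}.\]
For $l=1$ the right-hand side is the Eisenstein series $G_{k+1}$, and for $l=2$ it equals $q\tfrac{d}{dq}G_k$; both are quasimodular of weight $k+l$. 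In general, expanding $\Faulhaber_l(r)$ via Bernoulli numbers and rewriting $\sum_r r^j q^{mr}$ in terms of $q\tfrac{d}{dq}$-derivatives of $(1-q^m)^{-1}$ should express $\langle T_{k,l}\rangle_q$ as a quasimodular form of weight $k+l$.

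The main obstacle is this last step in full generality: arranging the above $q$-series identity so that $\langle T_{k,l}\rangle_q$ is manifestly quasimodular of the correct weight for every admissible pair $(k,l)$. The calculation is transparent for small $l$, but the general case requires careful combinatorial bookkeeping of the Faulhaber/Bernoulli expansion, and this is where the bulk of the technical work is likely to lie.
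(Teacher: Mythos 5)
The step you single out as "the main obstacle" is in fact the easy part: the double sum one gets from your identity is $\langle T_{k,l}\rangle_q=C_{k,l}+\sum_{m,r\geq 1}m^kr^{l-1}q^{mr}$, which is literally $D^{l-1}G_{k-l+2}$ (or $D^kG_{l-k}$), a quasimodular form of weight $k+l$; no delicate Bernoulli bookkeeping is needed (\cref{prop:ap2}). The genuine gap is the step you dismiss in one sentence, namely that $(\TA,\blok)$ is $\blok$-generated by the $T_{k,l}$ and, dually, that $\TA$ is closed under $\blok$ at all (without closure, $(\TA,\blok)$ is not even an algebra). M\"obius inversion over $\Pi(n)$ only rewrites a pointwise monomial $T_{\vec{k},\vec{l}}$ as a sum of $\blok$-products of \emph{connected} products $T_{k_1,l_1}\cp\cdots\cp T_{k_m,l_m}$; it says nothing about whether these connected products lie in the span of the $T_{k,l}$, let alone with controlled weight. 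Establishing this is the technical heart of the paper: one shows $S_{k_1,f_1}\cp\cdots\cp S_{k_n,f_n}=S_{|\vec{k}|,g}$ with $g$ an explicit alternating sum of iterated discrete convolutions (\cref{prop:computephi}, which rests on the support/convolution analysis of \cref{lem:disjointsupport} and the hypergeometric vanishing of \cref{lem:hypergeometric}), and then that for Faulhaber inputs $g$ is again a polynomial of the correct degree and parity (\cref{lem:convispol}), so that the connected product is a linear combination of functions $T_{|\vec{k}|,j}$ of weight at most $|\vec{k}|+|\vec{l}|$. Together with the recursion of \cref{prop:connectedbracketrecursion} this gives closure under $\blok$ (\cref{thm:tclosed}) and the generation statement you need; none of this is supplied, or even flagged, in your plan.

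A second, related omission concerns the grading. A quasimodular algebra is by definition graded, and for $(\TA,\blok)$ the weight grading (weight $k+l$ on $T_{k,l}$, extended multiplicatively under $\blok$) is only well defined once one knows the $\blok$-monomials in the $T_{k,l}$ are linearly independent and span, i.e.\ \cref{thm:basis}, which again uses the machinery above. The same caveat applies to your "essentially immediate" treatment of $\Sym^\blok(\SA)$: your argument shows that any element \emph{presented} as a $\blok$-polynomial of total weight $w$ in homogeneous elements of $\SA$ has a $q$-bracket in $\quasimodular_w$ (using \cref{Sisquasimodular} and multiplicativity, which is a legitimate shortcut for that half), but without the independence statement the weight of an element of $\Sym^\blok(\SA)$ is not yet well defined, so the assertion "quasimodular algebra" is not yet proved. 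Once these structural results are granted, your concluding step—multiplicativity of the $q$-bracket under $\blok$ together with $\langle T_{k,l}\rangle_q\in\quasimodular_{k+l}$—is exactly how the paper finishes.
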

With respect to the pointwise product, these algebras are not quasimodular because of the following subtlety: the $q$-bracket of a homogeneous function~$f$ in~$\TA$ (with respect to the pointwise product) often is of mixed weight (i.e., a linear combination of quasimodular forms of weights bounded by the weight of~$f$). By making use of the induced product, one can explain these lower weight quasimodular forms, as we do in \cref{sec:2}. For example,
\[ \langle T_{0,2}^2 \rangle_q \= G_2^2 \+ \frac{5}{6}G_4 \+ \frac{1}{6}G_2 \+ \frac{1}{288},\]
where $G_2$ and $G_4$ are the Eisenstein series defined by~\eqref{def:eis}. The right-hand side is a quasimodular form of mixed weight, which is explained by the fact that
\[ T_{0,2}^2 \= T_{0,2}\blok T_{0,2} \+ \frac{5}{6}T_{0,4} \+ \frac{1}{6}T_{0,2} \+ \frac{1}{288},\]
is a linear combination of elements of~$\TA$ of different weights with respect to the induced product. 

A main theme throughout this paper is the principle to establish all identities in~$\q^\partitions$ or~$\TA$ \emph{before} taking the~$q$-bracket, instead of doing these computations in~$\q[[q]]$ or the space of quasimodular forms~$\widetilde{M}$. By doing so, we discover the algebraic structure of~$\TA$. Without having the induced product at one's disposal, for example when studying the shifted symmetric algebra~$\Lambda^*$, this seems impossible. See the following table for an overview of situations where the principle is applied:
\vspace{7pt}
\renewcommand{\arraystretch}{1.5}
\begin{savenotes}
\begin{center}\vspace{-10pt}
\begin{tabular}{l l l}
\textbf{previous definitions and results} &\textbf{definitions and results in this work} &\S\S \\ \hline
\emph{multiplication} in~$\q[[q]]$ &\emph{induced product}~$\blok$ on~$\q^\partitions$ &\ref{par:3.2} \\\hline
\emph{$q$-bracket}:~$\q^\partitions\to\q[[q]]$ &\emph{$\vec{u}$-bracket}:~$\q^\partitions \to \q[[u_1,u_2,\ldots]]$ &\ref{par:3.2} \\\hline
\emph{connected~$q$-bracket}:~$\Sym^\otimes(\q^\partitions)\to \q[[q]]$ &\emph{connected product}:~$\Sym^\otimes(\q^\partitions)\to \q^\partitions$  &\ref{par:3.2}\\\hline
\emph{derivative}~$q\frac{\mathrm{d}}{\mathrm{d}q}$ on~$\q[[q]]$ &\emph{derivative} on~$\q^\partitions$ &\ref{par:der} \\\hline
\emph{$\sltwo$-action} on~$\widetilde{M}$ &\emph{$\sltwo$-action} on~$\TA$ &\ref{par:eq}\\\hline
\emph{Rankin--Cohen brackets} on~$\widetilde{M}$ &\emph{Rankin--Cohen brackets} on~$\TA$ &\ref{par:rc} \\\hline
formula for~$\langle H_p f\rangle_q$ in \cite[Eqn~152]{CMZ16}\footnote{In that work the hook-length moment~$H_p$ (see also \S\S\ref{sec:hlm}) was denoted by~$T_{p-1}$.} &formula for~$ T_{k,l}f$ &\ref{par:Tklf} 
\\\hline		
\end{tabular} 
\end{center}
\end{savenotes}
\vspace{5pt}
\renewcommand{\arraystretch}{1}

A further main result of the paper is the following:
\begin{thm}\label{thm:2}
The~$q$-bracket is an equivariant mapping~$\TA\to \widetilde{M}$ with respect to~$\sltwo$-actions by derivations on both spaces. 
\end{thm}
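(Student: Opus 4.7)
The strategy is to reduce equivariance to the action of three Lie-algebra generators of $\sltwo$ on a set of algebra generators of $\TA$. By Theorem~\ref{thm:1} (and the defining property $\langle f\blok g\rangle_q=\langle f\rangle_q\langle g\rangle_q$ of the induced product), the $q$-bracket is a grading-preserving ring homomorphism $(\TA,\blok)\to(\quasimodular,\cdot)$. Since the $\sltwo$-action is by derivations on both algebras, the intertwining relation $\langle X\cdot f\rangle_q=X\cdot\langle f\rangle_q$ for a fixed $X\in\sltwo$ is preserved under taking products. Hence, fixing a standard triple $(E,F,H)$ of generators of $\sltwo$, it suffices to verify equivariance for each of $E$, $F$, $H$ applied to the $\blok$-algebra generators $T_{k,l}$.

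\textbf{Cartan and raising.} The element $H$ acts on both sides as multiplication by weight on homogeneous components, so equivariance is immediate from the weight-preservation statement in Theorem~\ref{thm:1}. The raising operator on $\quasimodular$ is the derivative $D=q\tfrac{\mathrm{d}}{\mathrm{d}q}$, and the corresponding derivation on $\TA$ introduced in~\S\ref{par:der} is constructed precisely to satisfy $\langle Df\rangle_q=D\langle f\rangle_q$. This is a direct consequence of the definitions: differentiating $\sum_\lambda f(\lambda)q^{|\lambda|}$ pulls down a factor of $|\lambda|$, and the derivation on $\TA$ is designed so that this operation corresponds to multiplication by $|\lambda|$ inside the sum. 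Verification on each $T_{k,l}$ is then routine using~\eqref{def:Tkl0}.

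\textbf{Lowering.} The main obstacle is equivariance for the lowering operator. Writing $\quasimodular=\modular[G_2]$ with $\modular$ the ring of modular forms, this operator is a constant multiple of the derivation $\partial/\partial G_2$, which sees only the $G_2$-dependence. To match it on $\TA$, one specifies the corresponding derivation on each generator $T_{k,l}$ as an explicit element of weight two lower and extends by the Leibniz rule with respect to $\blok$. Two verifications are then required: well-definedness of the extension (compatibility with all relations among the $T_{k,l}$ under $\blok$), and intertwining on the generators, which reduces to extracting the precise $G_2$-dependence of $\langle T_{k,l}\rangle_q$. Once this is checked, equivariance on all of $\TA$ follows from the reduction in the first paragraph, completing the proof of Theorem~\ref{thm:2}.
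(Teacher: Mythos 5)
Your reduction scheme (check equivariance generator-by-generator, using that $\langle\,\cdot\,\rangle_q$ is a homomorphism $(\TA,\blok)\to(\quasimodular,\cdot)$ and that all operators are derivations) is exactly the paper's strategy, and your treatment of the weight operator and of $D$ matches \S\ref{par:der} and \cref{thm:sl2bo}. One small inaccuracy there: differentiating does not simply "pull down $|\lambda|$", because the $q$-bracket has a denominator; the correct identity is $D\langle f\rangle_q=\langle S_2f\rangle_q-\langle S_2\rangle_q\langle f\rangle_q$, which is why the lift is $Df=S_2f-S_2\blok f=S_2\cp f$, and one must also know $D$ preserves $\TA$, i.e.\ $D\,T_{k,l}=T_{k+1,l+1}$, which comes from \cref{prop:computephi}.

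The genuine gap is the lowering operator, which is where the actual content of \cref{thm:2} sits. You never specify $\mathfrak{d}\,T_{k,l}$, never verify the intertwining on generators, and never check the $\sltwo$ commutation relations for your (unspecified) action; saying "once this is checked, equivariance follows" restates the theorem rather than proving it. Moreover, you cannot construct the lift by transporting $-\tfrac12\,\partial/\partial G_2$ through the $q$-bracket, because the $q$-bracket is not injective on $\TA$: for instance $\langle T_{m,m}\rangle_q=\langle T_{m-1,m+1}\rangle_q=D^{m-1}G_2$, and the remark in \S\ref{par:der} shows that natural derivations of $\TA$ (such as $f\mapsto S_m\cp f$ for $m\geq 4$) need not descend to $\widetilde{M}$ at all; the existence of a compatible derivation is precisely the nontrivial point. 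The paper resolves it by the explicit formula $\mathfrak{d}\,T_{k,l}=k(l-1)T_{k-1,l-1}-\tfrac12\delta_{k+l,2}$, a direct check of the $\sltwo$ relations, and the verification $\mathfrak{d}\langle T_{k,l}\rangle_q=\langle\mathfrak{d}\,T_{k,l}\rangle_q$ using the closed form $\langle T_{k,l}\rangle_q=D^{l-1}G_{k-l+2}$ (resp.\ $D^kG_{l-k}$) from \cref{prop:ap2} together with the commutator identity \eqref{eq:D^n}; only then does the Leibniz argument you describe finish the proof. (One point in your favour: the well-definedness you worry about is automatic, since by \cref{thm:basis} the algebra $(\TA,\blok)$ is free on the $T_{k,l}$, so any assignment on generators extends uniquely to a derivation.)
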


Motivated by the fact that many functions in invariants of partitions are elements of~$\Lambda^*$, in~\cref{sec:application} we describe many functions on partitions which are elements of~$\TA$ or are closely related. Among those are the border strip moments, generalizing the hook-length moments, which are defined in terms of the representation theory of the symmetric group. The corresponding space~$\mathcal{X}$ of border strip moments is the image of a space~$\mathcal{U}$ under the aforementioned M\"oller transform~$\mathcal{M}$,
\begin{wrapfigure}{r}{0.27\textwidth}
  \vspace{-16pt}
 \begin{flushright}
\includegraphics[scale=1]{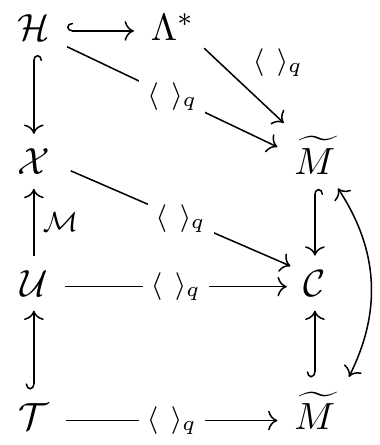}
 \end{flushright}
   \vspace{-16pt}
\end{wrapfigure}
 where $\mathcal{U}$ is generated by the double moment functions $T_{k,l}\in \mathcal{T}$ as well as the odd double moments functions (those for which $k+l$ is odd). The~$q$-brackets of these functions are contained in the space~$\mathcal{C}$ of so-called \emph{combinatorial Eisenstein series}, having the space of quasimodular forms as a subspace. Moreover, the space of hook-length moments~$\mathcal{H}$ is contained in both~$\Lambda^*$ and~$\mathcal{X}$---this contrasts the situation for~$\mathcal{T}$, which by \cref{rk:comparison} has a trivial intersection with~$\Lambda^*$. See the commutative diagram on the right for an overview of the spaces related to~$\TA$ with their corresponding mappings.

We hope that this work---besides advocating the notion of a `quasimodular algebra' by giving a new example of such an algebra and studying its algebraic structure---may serve as a tool for enumerative geometers trying to show that generating series are quasimodular forms. 

The contents of the paper are as follows. In \cref{sec:pre} we recall notions (known to the experts) related to quasimodular forms, partitions and special families of polynomials. Next, in \cref{sec:defn} we motivate all new notions in this work and prove quasimodularity of the algebra~$\SA$. A study of the symmetric algebra~$\TA$, including a proof of our main theorem can be found in \cref{sec:3}. The~$\sltwo$-action by differential operators, the proof of~\cref{thm:2}, and Rankin--Cohen brackets are the content of \cref{sec:sltwo}. In \cref{sec:2} further results that arise from comparing the two different products on~$\TA$ are given, and finally, in \cref{sec:application} we provide many examples of functions in or closely related to~$\TA$.  
%
\numberwithin{thm}{paragraph}
\section{Preliminaries}\label{sec:pre}
\paragraph{Quasimodular forms}\label{par:QMF}
Let~$\mathrm{Hol}_0(\mathfrak{H})$ be the ring of holomorphic functions~$\phi$ of moderate growth on the complex upper half plane~$\mathfrak{H}$, i.e., for all~$C>0$ one has~$\phi(x+iy)=O(e^{Cy})$ as~$y\to \infty$ and~$\phi(x+iy)=O(e^{C/y})$ as~$y\to 0$. 
A \textit{quasimodular form} of \textit{weight}~$k$ and \textit{depth} at most~$p$ for~$\sltwoz$ is a function~$\phi\in \mathrm{Hol}_0(\mathfrak{H})$ such that there exist~$\phi_0,\ldots, \phi_p\in \mathrm{Hol}_0(\mathfrak{H})$ so that for all~$\tau\in \mathfrak{H}$ and all~$\gamma = \left(\begin{smallmatrix} a & b \\ c & d\end{smallmatrix}\right) \in \sltwoz$ one has
\begin{align}\label{eq:modtrans}(c\tau+d)^{-k}\phi\Bigl(\frac{a\tau+b}{c\tau+d}\Bigr) \= \phi_0(\tau)\+\phi_1(\tau)\frac{c}{c\tau+d}\+\ldots\+\phi_p(\tau)\Bigl(\frac{c}{c\tau+d}\Bigr)^p.\end{align}
Equation~(\ref{eq:modtrans}) is called the \textit{quasimodular transformation property}. Note that if~$\phi$ is a quasimodular form, the functions~$\phi_0,\ldots, \phi_p$ are quasimodular forms uniquely determined by~$\phi$ (the function~$\phi_r$ has weight~$k-2r$ and depth~$\leq p-r$). For example, taking the identity~$I\in \Gamma$ yields~$\phi_0=\phi$. Quasimodular forms of depth~$0$ are called \emph{modular forms}. Besides the constant functions, the simplest examples are the \emph{Eisenstein series}
\[\label{def:eis}\hspace{40pt} G_k(\tau) \= -\frac{B_k}{2k}\+\sum_{r=1}^\infty\sum_{m=1}^\infty m^{k-1} q^{mr} \qquad (B_k = k\text{th Bernoulli number and } q=e^{2\pi i \tau}).\]
for positive even integers~$k$. For~$k>2$ the Eisenstein series are modular forms of weight~$k$. The Eisenstein series~$G_2$ is a quasimodular form of weight~$2$ and depth~$1$. 

Denote by~$\quasimodular_k^{(\leq p)}$ the vector space of quasimodular forms of weight~$k$ and depth at most~$p$. Often we omit the depth and/or weight and simply write~$\quasimodular_k$ for the vector space of all quasimodular forms of weight~$k$ or~$\quasimodular$ for the graded algebra of all quasimodular forms. Let~$\modular$ denote the graded algebra of modular forms. The quasimodular form~$G_2$ generates the algebra of quasimodular forms as an algebra over the subalgebra of modular forms, that is,~$\quasimodular=\modular[G_2]$. 

Often, when encountering an indexed collection of numbers or functions, we study its generating series. The generating series corresponding to the Eisenstein series is called the \emph{propagator} or the \emph{Kronecker--Eisenstein series of weight~$2$} and given by
\begin{equation}\label{eq:propagator}P(z,\tau)\=P(z)\defis \frac{1}{z^2} + 2\sum_{k=2}^\infty\Eis_k\frac{z^{k-2}}{(k-2)!} .\end{equation}
The propagator is closely related to the Weierstrass~$\wp$-function and Jacobi theta series
\begin{align}\label{eq:jacobitheta}
\wp(z,\tau)\,=\,\wp(z) \,:=\, \frac{1}{z^2}\,+\sum_{\substack{\omega \in \z \tau+ \z \\ \omega\neq 0}} \!\Bigl(\frac{1}{(z+\omega)^2}-\frac{1}{\omega^2}\Bigr),\qquad\theta(z)\,:=\,\sum_{\nu\in \z+{1\over 2}} (-1)^{\lfloor \nu\rfloor} e^{\nu z} q^{\nu^2/2}\end{align}
by
\[P(z)\=\frac{1}{2\pi i}\wp(\tfrac{z}{2\pi i},\tau)+2\Eis_2,\quad\quad P(z)\=-\pdv{}{z}\frac{\theta'(z)}{\theta(z)}.  \]
%

\paragraph{The action of~\texorpdfstring{$\sltwo$}{sl2} on quasimodular forms by derivations}
A way to produce examples of quasimodular forms is by taking derivatives of (quasi)modular forms under the differential operator~$D:\quasimodular_k^{(\leq p)} \to \quasimodular_{k+2}^{(\leq p+1)}$, given by
\[D\=\frac{1}{2\pi i} \frac{\mathrm{d}}{\mathrm{d}\tau} \= q\frac{\mathrm{d}}{\mathrm{d}q}.\]
In fact, every quasimodular form can uniquely be written as a linear combination of derivatives of modular forms and derivatives of~$G_2$. For more details, see \cite[p.~58--60]{Zag08}. It may happen that a polynomial in the derivatives of two modular forms~$f\in \modular_k$ and~$g\in \modular_l$ is actually modular. This is the case for the \emph{Rankin--Cohen brackets} of~$f$ and~$g$, defined by 
\[[f,g]_n \= \sum_{\substack{r,s\geq 0\\r+s=n}}(-1)^r\binom{k+n-1}{s}\binom{l+n-1}{r}D^r\!f\,D^s\!g \quad \quad (n\geq 0).\]
That is, for all~$f\in \modular_k, g\in \modular_l$ and~$n\geq 0$, one has that~$[f,g]_n$ is a modular form of weight~${k+l+2n}$. 

Besides the differential operator~$D$, an important differential operator on quasimodular forms is the operator~$\mathfrak{d}: \quasimodular_k^{(\leq p)} \to \quasimodular_{k-2}^{(\leq p-1)}$ defined by~$\phi\mapsto 2\pi i\phi_1$ (with~$\phi_1$ defined in the quasimodular transformation property~(\ref{eq:modtrans})). For example~${\mathfrak{d}G_2=-\frac{1}{2}}$ and in fact this property together with the fact that~$\mathfrak{d}$ annihilates modular forms defines~$\mathfrak{d}$ completely since~$\mathfrak{d}$ is a derivation and~${\quasimodular=\modular[G_2]}$.

 Let~$W$ be the weight operator, which multiplies a quasimodular form by its weight. The triple~$(D, \mathfrak{d}, W)$ forms an~\emph{$\sltwo$-triple} with respect to the commutator bracket~${[A,B]=AB-BA}$:
\begin{defn} A triple~$(X,Y,H)$ of operators is called an~\emph{$\sltwo$-triple} if
\[[H,X]\=2X, \quad [H,Y]\=-2Y, \quad [Y,X]\=H.\]
\end{defn}
\begin{remark} By these commutation relations, for all~$n\geq 1$ one has
\begin{equation}\label{eq:D^n} [\mathfrak{d},D^n] \=  n(W-n+1)D^{n-1},\end{equation}
which turns out to be useful later. 
\end{remark}

Following a suggestion of Zagier, we make the following definition:
\begin{defn}\label{def:g-alg}
Given a Lie algebra~$\mathfrak{g}$, a~$\mathfrak{g}$-algebra is an algebra~$A$ together with a Lie homomorphism~$\mathfrak{g}\to \mathrm{Der}(A).$
\end{defn}
As~$D,\mathfrak{d}$ and~$W$ satisfy the Leibniz rule, the algebra~$\widetilde{M}$ becomes an~$\sltwo$-algebra.

\paragraph{Partitions as a partially ordered set}
Given~$n\in \z_{\geq 0}$, let~$\partitions(n)$ denote the set of all integer partitions of~$n$ and~$\Pi(n)$ the set of all partitions of the set~$[n]:=\{1,2,\ldots, n\}$. Let~$\partitions=\bigcup_{n\in \z_{\geq 0}} \partitions(n)$ and~$\Pi=\bigcup_{n\in \z_{\geq 0}} \Pi(n)$ be the sets of all such partitions. Given~$\lambda \in \partitions(n)$ we write~$\lambda=(\lambda_1,\lambda_2,\ldots)$ with~$\lambda_1\geq \lambda_2\geq \ldots$ and~$|\lambda|:=\sum_{i=1}^\infty \lambda_i=n$. The largest index~$k$ such that~$\lambda_k>0$ is called the length of~$\lambda$, denoted by~$\ell(\lambda)$. Similarly, for~$\alpha\in\Pi(n)$ we write~$\ell(\alpha)$ for the cardinality of~$\alpha$. 
Moreover, for~$\lambda\in\partitions$ we let~$r_m(\lambda)$ denote the number of parts of~$\lambda$ equal to~$m$, i.e.~$r_m(\lambda)=\#\{i\mid \lambda_i=m\}$, and denote by~$\lambda'$ the conjugate partition of~$\lambda$. We call a partition~$\lambda$ \emph{strict} if there are no repeated parts, i.e.~$r_m(\lambda)\in \{0,1\}$ for all~$m$. For two partitions~$\kappa,\lambda$ we write~$\kappa\cup \lambda$ for the union of~$\kappa$ and~$\lambda$ as multisets, i.e.~${r_m(\kappa\cup\lambda)=r_m(\kappa)+r_m(\lambda)}$ for all~$m\in \n$. 

Both~$\partitions$ and~$\Pi(n)$ form a locally finite partially ordered set, i.e., a partially ordered set~$P$ for which for all~$x,z\in P$ there exists finitely many~$y\in P$ such that~$x\leq y\leq z$. Namely, on~$\partitions$ we define a partial order by~$\kappa\leq \lambda$ if~$r_m(\kappa)\leq r_m(\lambda)$ for all~$m\geq 1$. The ordering on~$\Pi(n)$ is given by~$\alpha\leq \beta$ if for all~$A\in \alpha$ there exists a~$B\in \beta$ such that~$A\subseteq B$. For instance, we have~$\alpha\leq \one$ for all~$\alpha\in\Pi(n)$, where~$\one=\{[n]\}$.

Recall that on a locally finite partially ordered set~$P$ the M\"obius function $\mu:P^2\to\z$ is defined recursively by (see for example \cite{R64}):
$\mu(x,z) \=  -\sum_{x\leq y\leq z} \mu(x,y)$ if ~$x<z$ with initial conditions~$\mu(x,x)=1$ and~$\mu(x,z)=0$ else. For the above partial order on~$\mathscr{P}$ the value of~$\mu(\kappa,\lambda)$ depends on whether the difference of~$\kappa$ and~$\lambda$ considered as multisets, denoted by~$\lambda- \kappa$, is a strict partition. That is,
\begin{equation}\label{eq:partitionmobius}\mu(\kappa,\lambda) \= \begin{cases}(-1)^{\ell(\lambda)-\ell(\kappa)} & \lambda- \kappa \text{ is a strict partition} \\ 0 & \text{else.}\end{cases}\end{equation}
The M\"obius function~$\mu(\alpha,\beta)$ of two elements $\alpha,\beta\in \Pi(n)$ is given by 
\[\mu(\alpha,\beta)\=\prod_{B\in\beta}(-1)^{\ell(\alpha_B)-1}(\ell(\alpha_B)-1)!\, ,\]
 where~$\alpha_B$ for $B\subset [n]$ is the partition on~$B$ induced by~$\alpha$. 
A M\"obius function satisfies the following two properties:
\begin{thm}\label{thm:mobius}
Let~$f,g$ be functions on a partially ordered set~$P$. Then
\begin{enumerate}[\upshape(i)]
\item\label{it:rdm} $\displaystyle\sum_{\alpha\leq \gamma\leq \beta} \mu(\alpha,\gamma) \= \delta_{\alpha,\beta} \= \sum_{\alpha\leq \gamma\leq \beta} \mu(\gamma,\beta)$\quad for all $\alpha,\beta\in P$;  
\item\label{it:miv} $\displaystyle f(\alpha)=\sum_{\gamma\leq\alpha}g(\gamma) \quad \forall \alpha \in P \quad \iff \quad g(\beta)=\sum_{\gamma\leq\beta}\mu(\gamma,\beta)\,f(\gamma) \quad \forall \beta \in P.$ 
\end{enumerate}
\end{thm}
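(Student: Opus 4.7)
The plan is to prove part~(i) first and then deduce~(ii) from it via a standard double-sum manipulation.

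For the first identity in~(i), the case $\alpha=\beta$ is immediate from $\mu(\alpha,\alpha)=1$. For $\alpha<\beta$, the defining recursion rearranges at once to
\[\sum_{\alpha\leq\gamma\leq\beta}\mu(\alpha,\gamma)\=\mu(\alpha,\beta)\+\sum_{\alpha\leq\gamma<\beta}\mu(\alpha,\gamma)\=0,\]
which is $\delta_{\alpha,\beta}$ as required.

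The second identity is the nontrivial half, since the Möbius recursion is asymmetric in its two arguments. My preferred route is through the \emph{incidence algebra} of $P$: the $\q$-algebra of functions $h:P\times P\to\q$ vanishing off the relation~$\leq$, equipped with the convolution product
\[(h_1*h_2)(\alpha,\beta)\=\sum_{\alpha\leq\gamma\leq\beta}h_1(\alpha,\gamma)\,h_2(\gamma,\beta),\]
which is well defined precisely because $P$ is locally finite. Writing $\zeta(\alpha,\beta)=1$ whenever $\alpha\leq\beta$, the first identity reads $\mu*\zeta=\mathrm{id}$. Because $\mu(\alpha,\alpha)=1$ for every $\alpha$, a symmetric recursion applied from the right produces a right inverse of $\mu$; two-sided invertibility then forces $\zeta*\mu=\mathrm{id}$, which is precisely the desired second identity. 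An equally good alternative is induction on the length of the longest chain in the interval $[\alpha,\beta]$, again relying on local finiteness to guarantee termination.

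For part~(ii), assuming $f(\alpha)=\sum_{\gamma\leq\alpha}g(\gamma)$ for all $\alpha\in P$, I would substitute and interchange the order of summation:
\[\sum_{\gamma\leq\beta}\mu(\gamma,\beta)\,f(\gamma)\=\sum_{\delta\leq\beta}g(\delta)\sum_{\delta\leq\gamma\leq\beta}\mu(\gamma,\beta)\=g(\beta),\]
where the second identity of~(i) collapses the inner sum to $\delta_{\delta,\beta}$. The reverse implication is entirely analogous, using the first identity of~(i) in place of the second. The principal obstacle is thus the second identity in~(i); once this is in hand, part~(ii) is a brief bookkeeping exercise, and local finiteness of $P$ is indispensable both for convergence of all sums involved and for terminating the recursion defining the right inverse of $\mu$.
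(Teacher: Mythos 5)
The paper itself offers no proof of this statement: it is recalled as a standard property of the M\"obius function of a locally finite poset (with Rota cited for the underlying theory), so there is no in-paper argument to measure yours against; I therefore judge the proposal on its own. It is the classical proof and is essentially correct: the first identity in (i) is indeed just the defining recursion rearranged, the second identity is correctly singled out as the nontrivial half, the incidence-algebra route (or the chain-length induction you mention) is the standard way to obtain it, and (ii) is the usual interchange of summation collapsed by (i), with the two implications using the two halves of (i) as you say. One sentence should be tightened: since $\mu*\zeta=\delta$ already exhibits $\zeta$ as a right inverse of $\mu$, what the ``symmetric recursion'' must produce is a \emph{left} inverse $\nu$ of $\mu$ (equivalently a right inverse of $\zeta$); then associativity gives $\nu=\nu*(\mu*\zeta)=(\nu*\mu)*\zeta=\zeta$, whence $\zeta*\mu=\delta$. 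As written (``a right inverse of $\mu$ \dots two-sided invertibility then forces'') the handedness is off, and possessing a right inverse alone does not yield two-sided invertibility; the intended general fact---every element of the incidence algebra whose diagonal values are invertible has both one-sided inverses by recursion and hence a unique two-sided inverse---is true and repairs the sentence immediately, so this is a slip of wording rather than a gap. A last small point: for the sums in (ii) to be defined one needs each down-set $\{\gamma\le\alpha\}$ to be finite (as it is for the posets $\partitions$ and $\Pi(n)$ used in the paper); local finiteness by itself only controls the interval sums in (i).
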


\paragraph{The connected~\texorpdfstring{$q$}{q}-bracket}
The~$q$-bracket defined in the introduction (Eqn~(\ref{eq:qbrac})) is a map~$\q^{\partitions}\to \q[[q]]$. In this section we define the connected~$q$-bracket following \cite[p.~55--57]{CMZ16}, which naturally arises in enumerative geometric when counting \textit{connected} coverings. In our setting, the connected~$q$-bracket turns out to be easier to compute than the usual~$q$-bracket.

For $A\subset [n]$ we denote $f_A = \prod_{a\in A} f_a$. 
\begin{defn}\label{def:qbrac} Given an integer $n\geq 1$, the \emph{connected~$q$-bracket} is defined as the multilinear map \[\langle\ \rangle_q:\underbrace{\QtoP \otimes\cdots \otimes \QtoP}_{n}\to \q\] extending the~$q$-bracket such that for all $f,f_1,\ldots, f_n\in \QtoP$ any of the following two equivalent conditions hold:
\begin{enumerate}[\upshape (i)]
\item \label{eq:conbrac}$\displaystyle \langle f_1 \otimes \cdots \otimes f_n\rangle_q \= \sum_{\alpha \in \Pi(n)} \mu(\alpha,\one)\prod_{A\in \alpha}\langle f_A\rangle_q\,;$
\item \label{eq:conbraciii}$\langle f_1\otimes \cdots \otimes f_n\rangle_q$ is the coefficient of~$x_1\cdots x_n$ in~$\log\langle \exp(\sum_{i=1}^n x_i f_i)\rangle_q\, .$
\end{enumerate}
\end{defn}
By invoking the M\"obius inversion formula (Theorem~\ref{thm:mobius}(\ref{it:miv})) condition~(\ref{eq:conbrac}) in \cref{def:qbrac} implies that
\begin{align}\label{eq:connectedbracket}\prod_{B\in \beta}\langle \otimes_{b\in B}f_b \rangle_q &\= \sum_{\alpha\leq\beta} \mu(\alpha,\beta)\prod_{A\in \alpha}\langle f_A\rangle_q\, , \qquad
\prod_{A\in \alpha}\langle f_A\rangle_q\= \sum_{\beta\leq\alpha} \prod_{B\in \beta}\langle \otimes_{b\in B}f_b \rangle_q \,.
\end{align}
For example,
\begin{align}
\langle f\otimes g\rangle_q &\= \langle fg\rangle_q-\langle f\rangle_q\langle g\rangle_q\, , \\
\langle f\otimes g\otimes h\rangle_q & \= \langle fgh\rangle_q-\langle f\rangle_q\langle gh\rangle_q-\langle g\rangle_q\langle fh\rangle_q-\langle h\rangle_q\langle fg\rangle_q+2\langle f\rangle_q\langle g\rangle_q\langle h\rangle_q\, , 
\end{align}
and
\begin{align}
\langle fg\rangle_q &\= \langle f\otimes g\rangle_q+\langle f\rangle_q\langle g\rangle_q \, ,  \\
\langle fgh \rangle_q & \= \langle f\otimes g\otimes h\rangle_q+\langle f\rangle_q\langle g\otimes h\rangle_q+\langle g\rangle_q\langle f \otimes h\rangle_q+\langle h\rangle_q\langle f \otimes g\rangle_q+\langle f\rangle_q \langle g\rangle_q \langle h\rangle_q\, . 
\end{align}

We often make use of the fact that the connected~$q$-bracket of functions~$f_1,\ldots, f_n$  vanishes if one of the~$f_i$ is constant.
\begin{lem}\label{lem:constantconnected}
For all $f_1,\ldots,f_n\in \QtoP$ one has
\[\langle 1 \otimes f_1\otimes \cdots \otimes f_n \rangle_q = 0.\]
\end{lem}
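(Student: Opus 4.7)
The plan is to invoke characterization~(ii) in \cref{def:qbrac}. Introduce an extra formal variable $x_0$ corresponding to the constant function $f_0 := 1$, so that the quantity $\langle 1\otimes f_1\otimes\cdots\otimes f_n\rangle_q$ we wish to show is zero is precisely the coefficient of $x_0 x_1\cdots x_n$ in
\[
\log\bigl\langle \exp\bigl(x_0 \+ \textstyle\sum_{i=1}^{n} x_i f_i\bigr)\bigr\rangle_q.
\]

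The key observation is that $e^{x_0}$ is a scalar---it does not depend on the partition $\lambda\in\partitions$---so by linearity of the $q$-bracket it pulls out. Taking the logarithm then yields
\[
\log\bigl\langle \exp\bigl(x_0 \+ \textstyle\sum_{i=1}^{n} x_i f_i\bigr)\bigr\rangle_q \= x_0 \+ \log\bigl\langle \exp\bigl(\textstyle\sum_{i=1}^{n} x_i f_i\bigr)\bigr\rangle_q.
\]
The first summand contributes only to pure powers of $x_0$, while the second summand is free of $x_0$ entirely. Hence the coefficient of the monomial $x_0 x_1\cdots x_n$ vanishes, as required.

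The argument is essentially a one-line manipulation once formulation~(ii) is available, so there is no real obstacle. As an alternative one could argue directly from~(i): sum $\mu(\alpha,\mathbf{1}_{n+1})\prod_{A\in\alpha}\langle f_A\rangle_q$ over $\alpha\in\Pi(n+1)$ (indexing the constant function by $0$), pair each $\alpha$ in which $\{0\}$ forms a singleton block with the partitions obtained by merging $0$ into one of the other blocks, and observe that $\langle 1\cdot f_A\rangle_q=\langle f_A\rangle_q$ together with the multiplicative formula for $\mu$ on $\Pi(n+1)$ makes the paired contributions cancel. This combinatorial version is noticeably more cumbersome, so the generating-series approach via~(ii) is what I would write up.
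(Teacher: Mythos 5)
Your argument is correct, but it is not the route the paper takes. The paper works directly from characterization~(i) of \cref{def:qbrac}: writing $f_{n+1}=1$, it observes that $\prod_{A\in\alpha}\langle f_A\rangle_q$ depends only on the restriction of $\alpha\in\Pi(n+1)$ to $[n]$, and then sums $\mu(\alpha,\mathbf{1})$ over the partitions with a fixed restriction, getting the explicit cancellation $a\cdot(-1)^{a-1}(a-1)!+(-1)^a a!=0$ --- this is exactly the ``cumbersome'' combinatorial alternative you sketch in your last paragraph. Your main write-up instead uses characterization~(ii): since $e^{x_0}$ is independent of $\lambda$ it pulls out of the $q$-bracket by linearity, the logarithm splits as $x_0+\log\langle\exp(\sum_{i=1}^n x_i f_i)\rangle_q$, and for $n\geq 1$ the monomial $x_0x_1\cdots x_n$ has zero coefficient in both summands. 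This is shorter and cleaner, but note what each version buys: your argument leans on the asserted (unproved in the paper) equivalence of~(i) and~(ii), whereas the paper's computation is self-contained from the M\"obius-sum definition and makes the coefficient cancellation explicit, a pattern of argument that recurs later (compare \cref{lem:hypergeometric} and the proof of \cref{prop:connectedbracketrecursion}). Also be aware that the statement implicitly requires $n\geq 1$ (for $n=0$ one has $\langle 1\rangle_q=1$); your proof handles this correctly since the coefficient of $x_0x_1\cdots x_n$ in the term $x_0$ vanishes only when $n\geq 1$, but it is worth saying so.
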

\begin{proof} Write $f_{n+1}=1$.
Observe that~$\prod_{A\in \alpha}\langle f_A\rangle_q$ takes the same value for all ${\alpha\in\Pi(n+1)}$ which agree on~$[n]$ (but differ in the subset~$A$ of~$\alpha$ containing~${n+1}$).  Then, summing~$\mu(\alpha,\one)$ over all such~$\alpha$ yields
\[a\cdot (-1)^{a-1}(a-1)! + (-1)^{a}a! = 0\]
as there are~$a$ choices for~$\alpha$ for which~$\{n+1\}$ is not a subset of $\alpha$, where~$a$ is the length of such an~$\alpha$, and there is only one choice for~$\alpha$ for which~$\{n+1\}$ is a subset. By \cref{def:qbrac}(\ref{eq:conbrac}) the result follows. 
\end{proof}
We will use the second condition in \cref{def:qbrac} in our proof that~$\SA$ is a quasimodular algebra.

\paragraph{The discrete convolution product and Faulhaber polynomials}
Let~$\n$ denote the set of strictly positive integers. Given $f,g:\n\to \q$ we denote by~$f\cdot g$ or~$fg$ the pointwise product of~$f$ and~$g$. We define the \emph{discrete convolution product} of~$f$ and~$g$ by
\[(f*g)(n)\=\sum_{i=1}^{n-1} f(i)\,g(n-i)\]
and denote the convolution product of functions~$f_1,\ldots, f_n$ by
\begin{equation}\label{eq:conv}\bigconv_{i=1}^n f_i \= f_1 * \cdots * f_n.\end{equation}
Let the \emph{discrete derivative}~$\partial$ of $f:\n\to \q$ be defined by $\partial f(n) = f(n)-f(n-1)$ for $n\geq 2$ and $\partial f(1)=f(1)$ and denote by~$\mathrm{id}$ the identity function $\n\to\n\subset\q$. Observe that
\begin{align}\label{convleibniz}
\partial (f*g) &\= (\partial f)*g\=f*(\partial g), \\
\partial(fg) &\= \partial(f)\,g+f\,\partial(g)-\partial(f)\,\partial(g), \label{prodleibniz}\\
\mathrm{id}\cdot(f*g) &\=(\mathrm{id}\cdot f)*g+f*(\mathrm{id}\cdot g), \label{eq:id(f*g)}\\
\partial^2(f*\mathrm{id}) &\=f-\partial f. \label{eq:f*id}
\end{align}
The \emph{Faulhaber polynomials}~$\Faulhaber_l$ for $l\ge1$ are defined as the unique polynomials with vanishing constant term satisfying $\partial\Faulhaber_l(n)=n^{l-1}$ for all $n\in \n$, or equivalently by $\Faulhaber_l(n)=\sum_{i=1}^n i^{l-1}$. The first four are given by
\[\Faulhaber_1(x)=x, \quad \Faulhaber_2(x)=\frac{x(x+1)}{2}, \quad \Faulhaber_3(x) = \frac{x(x+1)(2x+1)}{6}, \quad \Faulhaber_4(x)=\frac{x^2(x+1)^2}{4}.\]
Note that these polynomials are related to the Bernoulli polynomials~$B_n(x)$, the unique family of polynomials satisfying $\int_{x}^{x+1}B_n(u)\,\mathrm{d}u = x^n$, by the formula~$l\Faulhaber_l(x) = B_{l}(x+1)-B_{l}$. Hence, the Faulhaber polynomials admit the symmetry
\begin{equation}\label{eq:faulhabersym}\Faulhaber_l(x) \= (-1)^l\Faulhaber_l(-x-1) \quad\quad (l \geq 2),\end{equation}
which can also be deduced directly from the definition.
The generating series~$\mathpzc{F}\!(n)$ of the Faulhaber polynomials equals
\begin{equation}\label{def:gsfaulhaber}
\mathpzc{F}\!(n)\defis\sum_{l=1}^\infty \Faulhaber_l(n)\frac{z^{l-1}}{(l-1)!} \= e^z\frac{1-e^{nz}}{1-e^z}.
\end{equation}

\section{The moment functions, their~\texorpdfstring{$q$}{q}-bracket and a second product}\label{sec:defn}
\paragraph{Three proofs of the quasimodularity of the moment functions}\label{sec:3proofs}
The~$q$-bracket of the moment function~$S_k$ defined in~(\ref{eq:defSk}) equals the Eisenstein series~$G_k$. To motivate the results in the rest of this work, we provide three different proofs---and three generalizations---of this statement using three different approaches. In the first approach we motivate the definition of the~$T_{k,l}$ (see~(\ref{def:Tkl0})), the second approach gives an interpretation for these functions, and the last approach gives an example of our main principle of establishing all identities before taking the~$q$-bracket. 

\subparagraph{First approach} 
The key observation in this first proof is that~$S_k$ can be rewritten as
\begin{align} \label{def:sk1}
S_k(\lambda)\= -\frac{B_k}{2k}+\sum_{m=1}^\infty m^{k-1} r_m(\lambda).
\end{align}
More generally, for $k> 0$ and $f:\n\to \q$ we set $f(0)=0$ and we let
\begin{equation}\label{def:Tkf}
S_{k,f}(\lambda) \= -\frac{B_{k+1}}{2(k+1)}\delta_{f,\mathrm{id}} \+\sum_{m=1}^\infty m^{k} f(r_m(\lambda)).
\end{equation}
In case when~$f$ is the identity, $S_{k,f}=S_{k+1}$. Our first method of proof gives the following more general statement:
\begin{prop}\label{prop:faulhaber}
Let~$f$ be a polynomial of degree~$l$ without constant term and~$k$ a positive integer satisfying $k\equiv l \bmod 2$. Then, 
\begin{enumerate}[\upshape (i)]
\item if~$f$ equals a Faulhaber polynomial~$\Faulhaber_l$, then~$\langle S_{k,f}\rangle_q$ equals
\[ -\frac{B_{k+1}}{2(k+1)}\delta_{l,1}+\sum_{m,r\geq 1} m^{k}r^{l-1} q^{mr} \= \begin{cases} D^{l-1}G_{k-l+2} & k-l\geq 0, \\ D^k G_{l-k} & k-l\leq 2;\end{cases}\]
\item\label{it:prop2} if~$\langle S_{k,f}\rangle_q$ is a quasimodular form, then~$f$ is a multiple of the Faulhaber polynomial~$\Faulhaber_l$. 
\end{enumerate}
\end{prop}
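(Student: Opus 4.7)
The plan is to prove~(i) by a direct computation on the level of~$q$-series and to prove~(ii) by expanding~$f$ in the Faulhaber basis and arguing that any contribution of~$\Faulhaber_j$ with~$j\neq l$ is incompatible with~$\langle S_{k,f}\rangle_q$ being a quasimodular form of pure weight~$k+l$.

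For~(i), the key identity is~$\Faulhaber_l(r_m(\lambda))=\sum_{i=1}^{r_m(\lambda)} i^{l-1}$, which allows one to rewrite
\[
S_{k,\Faulhaber_l}(\lambda) \= -\frac{B_{k+1}}{2(k+1)}\delta_{l,1} \+ \sum_{m,i\geq 1} m^{k}\, i^{l-1}\,\mathbf{1}_{\{r_m(\lambda)\geq i\}}.
\]
Now~$\langle \mathbf{1}_{\{r_m(\lambda)\geq i\}}\rangle_q = q^{mi}$, by the bijection~$\lambda \mapsto \lambda\setminus(m^i)$ from partitions with~$r_m(\lambda)\geq i$ to all partitions, which shifts~$|\lambda|$ by~$-mi$; hence numerator and denominator in~\eqref{eq:qbrac} differ by the factor~$q^{mi}$. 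Interchanging summations and using linearity of the~$q$-bracket yields~$\sum_{m,i\geq 1}m^k i^{l-1} q^{mi}$; applying~$D^nq^{mi}=(mi)^nq^{mi}$ together with the Fourier expansion of the Eisenstein series then identifies this with~$D^{l-1}G_{k-l+2}$ (the constant term of~$G_{k-l+2}$ being annihilated by~$D^{l-1}$ when~$l\ge 2$, and giving the~$\delta_{l,1}$ correction when~$l=1$). The alternative form~$D^k G_{l-k}$ follows from the~$(m,i)\leftrightarrow (i,m)$ symmetry of the double sum.

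For~(ii), I would expand~$f=\sum_{j=1}^{l} b_j \Faulhaber_j$ in the Faulhaber basis of polynomials without constant term, with~$b_l\neq 0$ since~$\deg f=l$. Linearity and~(i) give
\[
\langle S_{k,f}\rangle_q \= (\text{const}) \+ \sum_{j=1}^{l} b_j \sum_{m,i\geq 1} m^{k}\, i^{j-1}\, q^{mi}.
\]
The general fact, obtained by reversing the computation in~(i), is that~$\sum_{m,i\geq 1} m^{a}i^{b}q^{mi}$ is (up to a constant) equal to~$D^{b}G_{a-b+1}$, or symmetrically to~$D^{a}G_{b-a+1}$, and is a quasimodular form precisely when~$a-b$ is \emph{odd}, i.e.\ when one of~$G_{a-b+1}$,~$G_{b-a+1}$ is a genuine Eisenstein series of even positive weight. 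With~$a=k$,~$b=j-1$ and the parity hypothesis~$k\equiv l\pmod 2$, this condition becomes~$j\equiv l\pmod 2$. Terms with~$j\not\equiv l\pmod 2$ therefore contribute non-quasimodular series which, by their distinct~$q$-expansions, are linearly independent modulo~$\quasimodular=\q[G_2,G_4,G_6]$; hence the corresponding~$b_j$ must vanish. The remaining terms with~$j\equiv l\pmod 2$ and~$j<l$ are quasimodular but of pure weight~$k+j<k+l$, so requiring~$\langle S_{k,f}\rangle_q$ to be a quasimodular form of weight~$k+l$ forces those~$b_j=0$ as well, leaving~$f=b_l\Faulhaber_l$.

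The main obstacle is the linear-independence assertion invoked in~(ii): that distinct ``formal odd-weight Eisenstein'' Lambert series~$\sum_n\sigma_s(n)q^n$ (for even~$s\ge 0$, including the divisor function~$\sigma_0=d$ arising when~$k=j-1$) together with their~$D$-derivatives remain independent modulo~$\quasimodular$. I would verify this by combining the explicit monomial basis of~$\quasimodular_{k+l}$ in~$G_2,G_4,G_6$ with a direct inspection of the leading Fourier coefficients of these series, or by using the depth-lowering derivation~$\mathfrak{d}$ from~\S\S\ref{par:QMF} to peel off contributions one weight at a time.
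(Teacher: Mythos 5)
Your proposal is correct, and part (i) takes a genuinely different route from the paper. The paper first computes $\langle x^{r_m}\rangle_q=\frac{1-q^m}{1-xq^m}$ from the product expansion of the partition generating function over multiplicities, and then extracts $\Faulhaber_l(r_m)$ via the exponential generating series of the Faulhaber polynomials (setting $x=e^z$, applying $\bigl(x\pdv{}{x}\bigr)^{l-1}$ and putting $z=0$); you instead use the identity $\Faulhaber_l(r_m(\lambda))=\sum_{i\ge1}i^{l-1}\mathbf{1}_{\{r_m(\lambda)\ge i\}}$ together with the bijection $\lambda\mapsto\lambda\setminus(m^i)$, which gives $\langle \mathbf{1}_{\{r_m\ge i\}}\rangle_q=q^{mi}$ directly. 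Your version is more elementary and purely combinatorial (the termwise interchange of sums is harmless since only finitely many $(m,i)$ with $mi\le N$ contribute to each $q^N$-coefficient), while the paper's generating-series computation packages all $l$ at once and feeds naturally into its later $n$-point-function formalism. For part (ii) both arguments are the same in essence---expand $f$ in the Faulhaber basis and invoke (i)---but the paper's proof is a single sentence, whereas you correctly isolate the actual content: that the series $\sum_{m,r}m^{k}r^{j-1}q^{mr}$ with $k+j$ odd (derivatives of odd-weight Eisenstein-type Lambert series) are not quasimodular and remain linearly independent modulo $\quasimodular$, and that the even-parity lower-degree terms contribute quasimodular forms of strictly smaller weight, which cannot cancel by the weight grading. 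One small point: you phrase the last step as requiring $\langle S_{k,f}\rangle_q$ to have weight $k+l$, while the statement only asserts it is a quasimodular form; but since $b_l\neq0$ the weight-$(k+l)$ component is nonzero, so graded independence of $\quasimodular$ forces the pure weight to be $k+l$ and your conclusion stands. The independence fact you flag as the main obstacle is genuine but standard (it is implicit in the paper and in its reference to the space of combinatorial Eisenstein series), and your suggested verification via leading Fourier coefficients or the depth-lowering operator $\mathfrak{d}$ is a reasonable way to pin it down.
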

\begin{proof}
Let $|x|\leq 1$ and $m\geq 1$. We compute
\begin{align}\label{eq:qbracxrm}
\langle x^{r_m}\rangle_q \= \frac{\sum_{\lambda \in \partitions} x^{r_m(\lambda)}q^{|\lambda|}}{\sum_{\lambda \in \partitions} q^{|\lambda|}}.
\end{align}
Observe that the multiplicities~$r_1(\lambda), r_2(\lambda), \ldots$ uniquely determine the partition~$\lambda$. Hence, for $|q|<1$ we have that
\begin{align}
\sum_{\lambda \in \partitions} x^{r_m(\lambda)}q^{|\lambda|} &\= \sum_{r_1,r_2,\ldots \geq 0} x^{r_m} q^{r_1+2r_2+\ldots + m r_m + \ldots} \\
&\= \left(\sum_{r_m=0}^\infty x^{r_m}q^{m r_m} \right)\prod_{i\neq m} \left(\sum_{r_i=0}^\infty q^{i r_i}\right) \\
&\= \frac{1}{1-x q^m} \prod_{i\neq m} \frac{1}{1-q^i}\, .
\end{align}
Substituting this result in the numerator of~(\ref{eq:qbracxrm}), we obtain
\begin{align}
\langle x^{r_m}\rangle_q \= \frac{1-q^m}{1-xq^m}\, . 
\end{align}

Hence,
\begin{align}\label{eq:qbracsumrm}\left\langle \tfrac{x}{1-x}(1-x^{r_m})\right\rangle_q \= \frac{xq^m}{1-xq^m}\, .\end{align}
Observe that applying~$x\pdv{}{x}$ to the right-hand side of~(\ref{eq:qbracsumrm}) has the same effect as applying~$\frac{1}{m}D$, where~$D$ is defined in §\ref{par:QMF}. 
After setting $x=e^z$, we find that~$\frac{x}{1-x}(1-x^{r_m})$ equals~$\mathpzc{F}\!(r_m)$ (see~(\ref{def:gsfaulhaber})). Hence, by taking~${l-1}$ derivatives~$x\pdv{}{x}=\pdv{}{z}$  and setting $z=0$, it follows that 
\begin{align}\langle S_{k,\Faulhaber_l} \rangle_q +\frac{B_{k+1}}{2(k+1)}\delta_{l,1} &\= \sum_{m\geq 0} m^{k} \langle \Faulhaber_l(r_m)\rangle_q \\
&\=\sum_{m\geq 0} m^k \left(x\pdv{}{x}\right)^{l-1} \frac{xq^m}{1-xq^m}\Big|_{x=1} \\
&\=\sum_{m\geq 0} m^{k} \left(\frac{1}{m}D\right)^{l-1} \frac{q^m}{1-q^m} \\
&\=\sum_{m,r\geq 1} m^{k}r^{l-1} q^{mr}.
\end{align}
Part~(\ref{it:prop2}) of the statement follows by writing~$f$ as a linear combination of Faulhaber polynomials. 
\end{proof}

\subparagraph{Second approach} 
The \emph{double moment functions}~$T_{k,l}$ (see~(\ref{def:Tkl0})) are by definition equal to~$S_{k,\Faulhaber_l}$ if $k>0$. Given a partition~$\lambda$, let $c_i(\lambda) = \#\{j\leq i \mid \lambda_i=\lambda_j\}$. Then, one has
\begin{equation}\label{def:Tkl}T_{k,l}(\lambda) \= -\frac{B_{k+l}}{2(k+l)}(\delta_{l,1}+\delta_{k,0}) + \sum_{i=1}^\infty \lambda_i^k c_i(\lambda)^{l-1}.\end{equation}
In this section we give a direct proof for the quasimodularity of the~$q$-brackets of~$T_{k,l}$:

\begin{prop}\label{prop:ap2} For all $k\geq 0,l\geq 1$ and~${k+l}$ even, one has
\[\langle T_{k,l}\rangle_q \= \begin{cases} D^{l-1}G_{k-l+2} & \text{if } k-l\geq 0, \\ D^k G_{l-k} & \text{if } k-l\leq 2.\end{cases}\]
\end{prop}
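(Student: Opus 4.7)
The plan is to reduce to Proposition~\ref{prop:faulhaber} whenever possible and to handle the one case it does not cover directly.

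First I would observe that for $k\geq 1$, the double moment function~$T_{k,l}$ coincides exactly with~$S_{k,\Faulhaber_l}$. Indeed, their nonconstant parts agree by definition, and the constants match as well: when $l=1$ we have $\Faulhaber_1=\mathrm{id}$, so both constants equal~$-B_{k+1}/(2(k+1))$; when $l\geq 2$, we have $\Faulhaber_l\neq\mathrm{id}$, and $C_{k,l}=0$ since $k\neq 0$ and $l\neq 1$. Hence for all $k\geq 1$, the claim is immediate from Proposition~\ref{prop:faulhaber}(i).

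For the remaining case $k=0$, I would repeat the generating-series argument of Proposition~\ref{prop:faulhaber} verbatim: using $\langle x^{r_m}\rangle_q = (1-q^m)/(1-xq^m)$ together with the generating function $\mathpzc{F}(r_m)=\sum_{l\geq 1}\Faulhaber_l(r_m)z^{l-1}/(l-1)!$ in~\eqref{def:gsfaulhaber}, one obtains $\langle \mathpzc{F}(r_m)\rangle_q = e^z q^m/(1-e^z q^m)$. Summing over $m\geq 1$ (now without the factor $m^k$) and extracting the coefficient of $z^{l-1}/(l-1)!$ gives
\[
\langle T_{0,l}\rangle_q \= -\frac{B_l}{2l} \+ \sum_{m,r\geq 1} r^{l-1}q^{mr} \= -\frac{B_l}{2l}\+\sum_{n\geq 1}\sigma_{l-1}(n)q^n \= G_l,
\]
which is precisely $D^k G_{l-k}$ at $k=0$.

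Finally, I would verify that the $q$-series $\sum_{m,r\geq 1}m^k r^{l-1}q^{mr}$ produced in both cases matches the piecewise formula: when $k\geq l$, rewrite $m^k r^{l-1} = (mr)^{l-1}m^{k-l+1}$ and set $n=mr$ to recognize the series as $D^{l-1}\sum_n \sigma_{k-l+1}(n)q^n$, which equals $D^{l-1}G_{k-l+2}$ up to the constant term of $G_{k-l+2}$; when $k\leq l$, rewrite symmetrically $m^k r^{l-1} = (mr)^k r^{l-1-k}$ to get $D^k\sum_n \sigma_{l-k-1}(n)q^n$. The constant term $C_{k,l}$ is nonzero exactly when $l=1$ (first case) or $k=0$ (second case), which are precisely the situations in which the derivative $D^{l-1}$ resp.\ $D^k$ does not annihilate the constant term of the corresponding Eisenstein series; in all other cases both the constant $C_{k,l}$ and the constant of $D^{l-1}G_{k-l+2}$ (resp.\ $D^k G_{l-k}$) vanish. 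There is no real obstacle here; the only nontrivial work is the bookkeeping on constants across the boundary cases $k=0$ and $l=1$, which is the reason the somewhat awkward definition of~$C_{k,l}$ in~\eqref{def:Tkl0} is the right one.
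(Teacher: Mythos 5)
Your proof is correct, but it takes a genuinely different route from the paper. You establish the statement by reduction: for $k\geq 1$ you identify $T_{k,l}$ with $S_{k,\Faulhaber_l}$ (including the constant terms, which you check correctly) and invoke \cref{prop:faulhaber}(i), and for the missing case $k=0$ you rerun the generating-series computation from that proposition's proof with the factor $m^k$ dropped, obtaining $\langle T_{0,l}\rangle_q=G_l$; your constant-term bookkeeping across the boundary cases $l=1$ and $k=0$ is also right. The paper instead gives a self-contained second proof, independent of \cref{prop:faulhaber}: it uses the rewriting $T_{k,l}(\lambda)=-\tfrac{B_{k+l}}{2(k+l)}(\delta_{l,1}+\delta_{k,0})+\sum_i \lambda_i^k c_i(\lambda)^{l-1}$ with $c_i(\lambda)=\#\{j\leq i\mid \lambda_i=\lambda_j\}$, packages this into the two-variable generating function $W(X,Y)(\lambda)=\sum_i X^{\lambda_i}Y^{c_i(\lambda)}$, and computes $\langle W(X,Y)\rangle_q=\sum_{a,b\geq 0}X^aY^bq^{ab}$ from the single combinatorial observation that the number of partitions of $n$ with at least $b$ parts of size $a$ is $p(n-ab)$. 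What each approach buys: yours is shorter given what is already proved, but it inherits the restriction $k\geq1$ of \cref{prop:faulhaber} (hence the separate $k=0$ patch) and yields no new information; the paper's argument is uniform in $k\geq 0$, genuinely independent of the first approach (which is the point of presenting ``three proofs''), and provides the interpretation of $T_{k,l}$ via the counters $c_i(\lambda)$. One cosmetic remark: in your final verification the rewriting $m^kr^{l-1}=(mr)^kr^{l-1-k}$ only makes sense for $k\leq l-1$; at $k=l$ you should simply use the other branch (as the statement's two cases overlap there only formally), but this is not a gap in the argument.
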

\begin{proof}
Denote by $T_{k,l}^0(\lambda) = \sum_{i=1}^\infty \lambda_i^k c_i(\lambda)^{l-1}$. 
The generating series of~$T_{k,l}^0$ is given by
\[W(X,Y)(\lambda) \= \sum_{i=1}^\infty X^{\lambda_i}Y^{c_i(\lambda)},\]
that is,~$T_{k,l}^0(\lambda)$ is the coefficient of~$\frac{x^ky^{l-1}}{k!(l-1)!}$ in~$W(e^x,e^y)(\lambda)$. 
Consider 
\begin{equation}\label{eq:Wtilde}
\sum_{\lambda \in \partitions} W(X,Y)(\lambda)\, q^{|\lambda|}\=\sum_{\lambda \in \partitions}\sum_{i=1}^\infty X^{\lambda_i}Y^{c_i(\lambda)}q^{|\lambda|}.\end{equation}
Given $a,b,n\in \z_{\geq 0}$, denote by~$C_{a,b}(n)$ the coefficient in front of~$X^{a}Y^bq^{n}$ in~(\ref{eq:Wtilde}), that is
\[\sum_{\lambda \in \partitions}\sum_{i=1}^\infty X^{\lambda_i}Y^{c_i(\lambda)}q^{|\lambda|} \;=:\; \sum_{a,b,n\geq 0} C_{a,b}(n) X^{a}Y^{b}q^{n}.\]
Let~$p(n)$ denote the number of partitions of~$n$. The coefficient~$C_{a,b}(n)$ equals the number of partitions of~$n$ with at least~$b$ parts of size~$a$, i.e., $C_{a,b}(n)=p(n-ab)$. Hence, writing $m=n-ab$ we obtain
\[\sum_{\lambda \in \partitions}\sum_{i=1}^\infty X^{\lambda_i}Y^{c_i(\lambda)}q^{|\lambda|} \= \left(\sum_{m=0}^\infty p(m)q^{m}\right)\left(\sum_{a,b\geq 0}X^{a}Y^{b} q^{ab}\right).\]
In other words,
\[\langle W(X,Y)\rangle_q \=\sum_{a,b\geq 0} X^{a}Y^{b}q^{ab},\]
so that expanding this equation for $X=e^x$ and $Y=e^y$ yields 
\[\langle T_{k,l}^0 \rangle_q \= \sum_{a,b\geq 0}a^kb^{l-1} q^{ab}.\]
As $T_{k,l}(\lambda) = -\frac{B_{k+l}}{(k+l)}(\delta_{l,1}+\delta_{k,0}) + T_{k,l}^0(\lambda)$ we obtain the desired result. 
\end{proof}

\subparagraph{Third approach} In this last proof we start with the observation that one can rewrite the~$q$-bracket as
\begin{equation}\label{eq:qbrac2}\langle f\rangle_q \= \frac{\sum_{\lambda\in \partitions} f(\lambda)\, u_{\lambda_1} u_{\lambda_2} \cdots }{\sum_{\lambda \in \partitions} u_{\lambda_1} u_{\lambda_2} \cdots}\Big|_{u_i=q^{i}}\, . \end{equation}
In contrast to the previous two proofs, it is only in the last step of this proof that we take the~$q$-bracket: first we rewrite~(\ref{eq:qbrac2}) considering~$u_1,u_2,\ldots$ to be formal variables, and in the last step we let $u_i=q^i$. We start with the denominator, where we encounter the M\"obius function on partitions also defined in \cite{Sch16}.

\begin{prop}\label{prop:mobius} There exists a function $\mu:\partitions \to \{-1,0,1\}$ defined by any one of the following three equivalent definitions: \begin{enumerate}[\upshape (i)]
\item $\mu(\lambda)$ is given by the M\"obius function~$\mu(\emptyset, \lambda)$ on the partial order on the set of partitions in \upshape(\ref{eq:partitionmobius});
\item $\mu(\lambda) \= \begin{cases} (-1)^{\ell(\lambda)} & \lambda \text{ is a strict partition} \\ 0 & \text{else;}\end{cases}$
\item $\displaystyle \frac{1}{\sum_{\lambda \in \partitions} u_{\lambda_1} u_{\lambda_2} \cdots} \= \sum_{\lambda \in \partitions} \mu(\lambda)\, u_{\lambda_1} u_{\lambda_2} \cdots.$
\end{enumerate}
\end{prop}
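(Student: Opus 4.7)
The plan is to establish the three pairwise equivalences, and I expect the proof to be essentially a bookkeeping exercise with no real obstacle: the only substantive ingredient is the standard factorization of the generating series $\sum_\lambda u_{\lambda_1} u_{\lambda_2}\cdots$ as an infinite product over multiplicities.

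First I would show (i) $\Leftrightarrow$ (ii) by specializing the formula (\ref{eq:partitionmobius}) for the M\"obius function on the partition poset at $\kappa = \emptyset$. Since $\lambda - \emptyset = \lambda$ and $\ell(\emptyset) = 0$, the formula collapses to $\mu(\emptyset,\lambda) = (-1)^{\ell(\lambda)}$ when $\lambda$ is strict and $0$ otherwise, which is precisely (ii).

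For (ii) $\Leftrightarrow$ (iii), the idea is to rewrite both the partition generating series and the proposed signed series as infinite products. Grouping each partition by its multiplicity sequence and using $u_{\lambda_1} u_{\lambda_2} \cdots = \prod_{m \geq 1} u_m^{r_m(\lambda)}$, I would obtain
\[
\sum_{\lambda \in \partitions} u_{\lambda_1} u_{\lambda_2} \cdots \= \prod_{m \geq 1} \sum_{r \geq 0} u_m^r \= \prod_{m \geq 1} \frac{1}{1 - u_m}.
\]
Restricting to strict partitions gives each $r_m \in \{0,1\}$ and each chosen part contributes a sign, so that the candidate series in (ii) satisfies
\[
\sum_{\lambda \in \partitions} \mu(\lambda)\, u_{\lambda_1} u_{\lambda_2} \cdots \= \sum_{\lambda \text{ strict}} (-1)^{\ell(\lambda)} \prod_{m \geq 1} u_m^{r_m(\lambda)} \= \prod_{m \geq 1}(1 - u_m).
\]
Multiplying the two product expansions yields $1$, which is exactly the identity in (iii), so (ii) $\Rightarrow$ (iii). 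Conversely, since the denominator $\sum_\lambda u_{\lambda_1}u_{\lambda_2}\cdots$ has constant term $1$ (from $\lambda = \emptyset$), its formal inverse in $\q[[u_1,u_2,\ldots]]$ is unique, so the coefficient function in (iii) is uniquely determined and must equal the $\mu$ from (ii). This closes the loop.

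The only point requiring mild care is the interchange of sums in the product expansion, but this is harmless in the formal power series ring $\q[[u_1,u_2,\ldots]]$ because for any monomial $u_1^{a_1} u_2^{a_2}\cdots$ (with finitely many nonzero $a_i$) only finitely many partitions contribute. No genuine obstacle arises.
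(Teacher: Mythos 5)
Your argument is correct, and for (i)$\Leftrightarrow$(ii) it is the same specialization of~(\ref{eq:partitionmobius}) at~$\kappa=\emptyset$ as in the paper; but for (ii)$\Leftrightarrow$(iii) you take a genuinely different route. The paper reduces (iii) to the convolution identity $\sum_{\alpha\cup\beta=\lambda}\mu(\alpha)=\delta_{\lambda,\emptyset}$ and deduces it from M\"obius inversion on the partition poset (Theorem~\ref{thm:mobius}(ii)) together with the translation property $\mu(\gamma,\beta)=\mu(\emptyset,\beta-\gamma)$, whereas you factor both series as infinite products, $\sum_{\lambda}u_{\lambda_1}u_{\lambda_2}\cdots=\prod_{m\geq 1}(1-u_m)^{-1}$ and $\sum_{\lambda\ \mathrm{strict}}(-1)^{\ell(\lambda)}u_{\lambda_1}u_{\lambda_2}\cdots=\prod_{m\geq 1}(1-u_m)$, multiply them, and appeal to uniqueness of the inverse of a power series with constant term~$1$ in $\q[[u_1,u_2,\ldots]]$. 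Your proof is more elementary and self-contained: it needs no poset machinery, only the Euler-product factorization (the same device the paper uses elsewhere, e.g.\ in the first proof of \cref{prop:faulhaber}), and the uniqueness remark cleanly gives the converse direction. The paper's proof buys coherence with the framework it has just set up: it exhibits $\mu(\lambda)$ as a genuine instance of the poset M\"obius function and exercises the inversion formula that is reused throughout (e.g.\ in \cref{prop:defblok2} and \cref{prop:phibrac}), at the cost of invoking that general theorem. Your handling of the formal-series convergence (each monomial receives only finitely many contributions) is the right justification and closes the only delicate point.
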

\begin{proof}
The first two definitions clearly coincide using \upshape(\ref{eq:partitionmobius}). For the latter, it suffices to show that
\[\sum_{\alpha\cup\beta=\lambda} \mu(\alpha) \= \delta_{\lambda,\emptyset}\, .\]
Let $f(\lambda)=1$ and $g(\lambda)=\delta_{\lambda,\emptyset}$ for $\lambda \in \partitions$. Then, $f(\alpha)=\sum_{\gamma\leq \alpha}g(\gamma)$ for all $\alpha\in \partitions$, so that by M\"obius inversion and by using $\mu(\gamma,\beta) = \mu(\emptyset,\beta-\gamma)$ the last definition is equivalent. 
\end{proof}

The fact that $\langle S_k\rangle_q=G_k$ follows directly from the following proposition:
\begin{prop}\label{prop:phibrac} For all $m\geq 1$ and $f:\n\to \q$ extended by $f(0)=0$, one has
\[\frac{\sum_{\lambda \in \partitions} f(r_m(\lambda))\, u_{\lambda_1} u_{\lambda_2} \cdots}{\sum_{\lambda \in \partitions} u_{\lambda_1}u_{\lambda_2}\cdots} \= \sum_{r=1}^\infty \partial f(r)\, u_m^r.\]
\end{prop}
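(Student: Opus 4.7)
The plan is to exploit the fact that a partition $\lambda$ is uniquely determined by its multiplicities $r_1(\lambda), r_2(\lambda), \ldots$, together with the identity $u_{\lambda_1} u_{\lambda_2} \cdots = \prod_{i=1}^\infty u_i^{r_i(\lambda)}$. This turns the sums over partitions on the left-hand side into products over the positive integers, at which point everything collapses to a single telescoping sum in the variable $u_m$.

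First I would compute the denominator in closed form. Switching from the sum over $\lambda\in \partitions$ to the sum over multiplicity vectors yields
\[
\sum_{\lambda\in\partitions} u_{\lambda_1} u_{\lambda_2}\cdots \= \sum_{r_1,r_2,\ldots\geq 0}\prod_{i=1}^\infty u_i^{r_i} \= \prod_{i=1}^\infty \frac{1}{1-u_i},
\]
viewed as an identity in $\q[[u_1, u_2, \ldots]]$. Next I would isolate the $m$-th factor of the numerator by exactly the same reorganisation, separating out the dependence on $r_m$:
\[
\sum_{\lambda\in\partitions} f(r_m(\lambda))\,u_{\lambda_1}u_{\lambda_2}\cdots \= \Bigl(\sum_{r\geq 0} f(r)\,u_m^r\Bigr)\prod_{i\neq m}\frac{1}{1-u_i}.
\]

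Dividing the two displays, the factors with $i\neq m$ cancel and the ratio becomes $(1-u_m)\sum_{r\geq 0} f(r)u_m^r$. Expanding this product gives
\[
\sum_{r\geq 0} f(r)\,u_m^r \,-\,\sum_{r\geq 1} f(r-1)\,u_m^r \= f(0)\+\sum_{r\geq 1}\bigl(f(r)-f(r-1)\bigr)u_m^r,
\]
and since $f(0)=0$ by the convention introduced in \eqref{def:Tkf}, the coefficient of $u_m^r$ for $r\geq 1$ is exactly $\partial f(r)$ (noting that $\partial f(1)=f(1)=f(1)-f(0)$ matches the paper's definition). This yields the claimed formula.

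There is no real obstacle here: the entire argument is a direct rearrangement of formal power series, and the only thing to check carefully is that the convention $f(0)=0$ reconciles the case $r=1$ of $\partial f$ with the definition $\partial f(1)=f(1)$. Because the identity holds in $\q[[u_1,u_2,\ldots]]$ rather than only after the specialisation $u_i=q^i$, it also illustrates the paper's guiding principle of performing computations before passing to the $q$-bracket.
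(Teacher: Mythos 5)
Your argument is correct, but it is not the route the paper takes for this proposition. You factor both numerator and denominator via the Euler product in $\q[[u_1,u_2,\ldots]]$, namely $\sum_\lambda u_\lambda=\prod_i(1-u_i)^{-1}$ and $\sum_\lambda f(r_m(\lambda))u_\lambda=\bigl(\sum_{r\ge0}f(r)u_m^r\bigr)\prod_{i\neq m}(1-u_i)^{-1}$, cancel the factors with $i\neq m$, and read off the telescoping identity $(1-u_m)\sum_{r\ge0}f(r)u_m^r=\sum_{r\ge1}\partial f(r)u_m^r$ using $f(0)=0$. This is essentially the paper's \emph{first} approach (the proof of \cref{prop:faulhaber}) lifted from the specialization $u_i=q^i$ to formal variables, and it is perfectly valid; the only point worth making explicit is that the infinite products and the inversion of the denominator are legitimate in $\q[[u_1,u_2,\ldots]]$ because every monomial involves only finitely many variables, so all coefficient computations are finite. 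The paper's own proof of \cref{prop:phibrac} instead multiplies by the M\"obius series of \cref{prop:mobius}, $\sum_\lambda\mu(\lambda)u_\lambda$, and shows the coefficient $C(\lambda)=\sum_{\alpha\cup\beta=\lambda}(-1)^{\ell(\beta)}f(r_m(\alpha))$ vanishes via a sign-reversing involution whenever $\lambda$ has a part $m'\neq m$, leaving only partitions with all parts equal to $m$, where the coefficient is $\partial f(r_m(\lambda))$. What your route buys is brevity and a closed-form computation; what the paper's route buys is practice with the M\"obius function $\mu(\lambda)$ and with coefficient-extraction arguments of exactly the kind reused later (e.g.\ in \cref{prop:defblok2} and \cref{lem:disjointsupport}), which is why the paper frames this as its ``third approach''.
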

\begin{proof}
Fix $m\geq 1$. By the previous proposition, we have 
\[\frac{\sum_{\lambda \in \partitions} f(r_m(\lambda))\, u_{\lambda_1} u_{\lambda_2} \cdots}{\sum_{\lambda \in \partitions} u_{\lambda_1}u_{\lambda_2}\cdots} \= \left(\sum_{\lambda \in \partitions} f(r_m(\lambda))\, u_{\lambda_1} u_{\lambda_2} \cdots\right)\left(\sum_{\lambda \in \partitions} \mu(\lambda)\, u_{\lambda_1} u_{\lambda_2} \cdots\right).\]
Denote by~$C(\lambda)$ the coefficient of~$u_{\lambda_1}u_{\lambda_2}\cdots$ after expanding the right-hand side of above equation. Observe that
\[C(\lambda) \= \sum_{\alpha\cup\beta=\lambda} (-1)^{\ell(\beta)} f(r_m(\alpha)),\]
where~$\alpha\cup \beta$ denotes the union of~$\alpha$ and~$\beta$ considered as multisets and it is understood that~$\beta$ is a strict partition. Suppose~$\lambda$ admits a part equal to $m'\neq m$. Then, define an involution~$\omega$ on all pairs~$(\alpha,\beta)$ satisfying that $\alpha\cup\beta=\lambda$ and~$\beta$ is strict by 
\[\omega(\alpha,\beta) \= 
\begin{cases} 
(\alpha\backslash\{m'\},\beta\cup \{m'\}) & \text{if } r_{m'}(\beta)=0, \\ 
(\alpha\cup\{m'\},\beta\backslash \{m'\}) & \text{if } r_{m'}(\beta)=1.
\end{cases}\]
As~$\omega$ changes the sign of~$(-1)^{\ell(\beta)}f(r_m(\alpha))$, it follows that $C(\lambda)=0$. 

Observe that $C(\emptyset)=0$ and that in case $\lambda=(m,m,\ldots)$ consists of a strictly positive number of parts all equal to~$m$ one has
\[C(\lambda) \= f(r_m(\lambda))-f(r_m(\lambda)-1)\=\partial f(r_m(\lambda)).\]
Therefore, the desired result follows. 
\end{proof}

\paragraph{The induced and connected product}\label{par:3.2}
Motivated by the last of the three approaches in the previous section, we define the~\emph{$\vec{u}$-bracket} of a function $f\in \q^\partitions$ by 	
\[\langle f\rangle_{\vec{u}} \= \frac{\sum_{\lambda\in \partitions} f(\lambda)\, u_\lambda }{\sum_{\lambda \in \partitions} u_\lambda} \quad\quad\quad (u_\lambda=u_{\lambda_1} u_{\lambda_2}\cdots).\]
Then, for all $f\in \QtoP$ one has~$\langle f \rangle_q=\langle f \rangle_{(q, q^2, q^3,\ldots)}$.
Observe that the~$\vec{u}$-bracket defines an isomorphism of vector spaces 
\[\QtoP \xrightarrow{\sim} \q[[u_1,u_2,u_3,\ldots]], \quad \quad f \mapsto \langle f \rangle_{\vec{u}}\, .\]
 We now use the algebra structure of~$\q[[u_1,u_2,u_3,\ldots]]$ to define a product on~$\QtoP$. 
\begin{defn}\label{def:inducedprod} Given $f,g\in \QtoP$ we define their \emph{induced product}~${f\blok g}$ by
\[\langle f\blok g \rangle_{\vec{u}}\= \langle f\rangle_{\vec{u}}\langle g\rangle_{\vec{u}}\, ,\]
where the product of~$\langle f\rangle_{\vec{u}}$ and~$\langle g\rangle_{\vec{u}}$ is the usual product of power series.
\end{defn}
\begin{remark} Observe that~$\QtoP$ is a commutative algebra with the constant function~$1$ as the identity for both the pointwise and the induced product. This observation should be compared with the~$q$-bracket arithmetic in \cite{Sch16}.
\end{remark}
The following proposition gives an alternative definition for the induced product. 
\begin{prop}\label{prop:defblok2} For all~$\lambda \in \partitions$, one has
\[(f\blok g)(\lambda) \;= \sum_{\alpha\cup\beta\cup\gamma=\lambda} f(\alpha)\,g(\beta)\,\mu(\gamma).\]
\end{prop}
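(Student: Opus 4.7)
The plan is to unwind the definition of the induced product and reduce everything to a straightforward identity of formal power series in the variables $u_1,u_2,\ldots$, using the explicit formula for the inverse of $\sum_\lambda u_\lambda$ provided by Proposition on the Möbius function.

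First I would note that the $\vec{u}$-bracket gives an isomorphism of vector spaces $\q^\partitions \xrightarrow{\sim} \q[[u_1,u_2,\ldots]]$, and in particular a function $h\in \q^\partitions$ is uniquely determined by the coefficients of $\langle h\rangle_{\vec{u}}$. Hence, to verify the claimed formula for $f\blok g$, it suffices to compute the $\vec{u}$-bracket of the proposed right-hand side and check that it equals $\langle f\rangle_{\vec{u}}\langle g\rangle_{\vec{u}}$, which is the defining property of the induced product.

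Next I would multiply out. By Proposition on the Möbius function,
\[\frac{1}{\sum_{\lambda\in\partitions} u_\lambda} \= \sum_{\gamma\in\partitions} \mu(\gamma)\, u_\gamma.\]
Therefore,
\[\langle f\rangle_{\vec{u}}\,\langle g\rangle_{\vec{u}} \= \frac{1}{\sum_\lambda u_\lambda}\Bigl(\sum_{\alpha} f(\alpha)\,u_\alpha\Bigr)\Bigl(\sum_{\beta} g(\beta)\,u_\beta\Bigr)\Bigl(\sum_{\gamma}\mu(\gamma)\,u_\gamma\Bigr).\]
Since $u_{\alpha} u_{\beta} u_{\gamma} = u_{\alpha\cup\beta\cup\gamma}$ (as $\alpha\cup\beta\cup\gamma$ is precisely the multiset of all parts appearing), collecting the coefficient of $u_\lambda$ in the numerator gives $\sum_{\alpha\cup\beta\cup\gamma=\lambda}f(\alpha)\,g(\beta)\,\mu(\gamma)$. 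Thus the right-hand side of the desired identity is exactly the unique $h\in \q^\partitions$ with $\langle h\rangle_{\vec{u}}=\langle f\rangle_{\vec{u}}\langle g\rangle_{\vec{u}}$, that is, $h=f\blok g$.

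I do not anticipate a serious obstacle; the only point requiring minor care is that the monomials $u_\lambda$ indexed by partitions $\lambda\in\partitions$ are genuinely a basis of $\q[[u_1,u_2,\ldots]]$ (viewed as the ring of power series in commuting variables, where each monomial corresponds uniquely to the multiset of its indices), so the coefficient-by-coefficient comparison is legitimate and no convergence issue arises since all sums over $\alpha\cup\beta\cup\gamma=\lambda$ are finite.
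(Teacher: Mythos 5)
Your proposal is correct and follows essentially the same route as the paper: unwind the definition of $\blok$ via the $\vec{u}$-bracket, invert the denominator using \cref{prop:mobius}, and compare coefficients of $u_\lambda$ after expanding the triple product. No gaps.
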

\begin{proof}
By definition
\[\sum_{\lambda\in \partitions}(f\blok g)(\lambda)\,u_\lambda \= \frac{\left(\sum_{\lambda\in \partitions} f(\lambda)\, u_\lambda \right)\left(\sum_{\lambda\in \partitions} g(\lambda)\, u_\lambda \right)}{\sum_{\lambda \in \partitions} u_\lambda}.\]
By \cref{prop:mobius} this equals 
\[\left(\sum_{\lambda\in \partitions} f(\lambda)\, u_\lambda \right)\left(\sum_{\lambda\in \partitions} g(\lambda)\, u_\lambda \right)\left(\sum_{\lambda\in \partitions} \mu(\lambda)\, u_\lambda \right).\]
The result follows by expanding the products. 
\end{proof}

Analogous to the connected~$q$-bracket, we define the connected product. For a set~$S$ and functions $f_s\in \QtoP$ for all $s\in S$, we denote $f_S = \prod_{s\in S}f_s\, .$
\begin{defn}For~$f_1,\ldots, f_n\in \QtoP$, define the \emph{connected product}~$f_1\,|\,\ldots| f_n$ to be the following function~$\partitions \to \q$:
\begin{align}\label{def:connectedprod}f_1\cp\ldots\cp f_n\defis\sum_{\alpha\in\Pi(n)} \mob \bigblok_{A\in\alpha} f_A\, .\end{align}
\end{defn}
For example, for~$f,g,h\in\QtoP$ one has
\begin{align}
f\cp g &\=  fg- f\blok g, \\
 f\cp g\cp h &\=  fgh- f \blok gh- g \blok fh- h \blok fg+2 f\blok g \blok h.
\end{align}

The induced and connected product allow us to establish many identities before taking the~$q$-bracket, as follows from the following result. 
\begin{prop}\label{prop:prod} For all~$f_1,\ldots, f_n\in \QtoP$ one has
\begin{itemize}
\item $\displaystyle \langle f_1\blok f_2 \blok \cdots \blok f_n\rangle_q \= \langle f_1\rangle_q \langle f_2\rangle_q \cdots \langle f_n\rangle_q\, $;
\item $\displaystyle \langle f_1\cp \ldots\cp  f_n\rangle_q \= \langle f_1 \otimes \cdots \otimes f_n\rangle_q\,$. 
\end{itemize}
\end{prop}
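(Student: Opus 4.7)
\medskip

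The plan is to prove the two bullet points in order, treating the first as essentially a restatement of the definition and the second as a bookkeeping exercise that combines the first with the definition of the connected product.

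For the first identity, I would start from \cref{def:inducedprod}, which by construction gives $\langle f \blok g\rangle_{\vec{u}} = \langle f\rangle_{\vec{u}}\langle g\rangle_{\vec{u}}$ as an identity in $\q[[u_1,u_2,\ldots]]$. Since the $q$-bracket is obtained from the $\vec{u}$-bracket by the substitution $u_i = q^i$, and this substitution is an algebra homomorphism $\q[[u_1,u_2,\ldots]] \to \q[[q]]$, the identity $\langle f \blok g\rangle_q = \langle f\rangle_q\langle g\rangle_q$ follows immediately. The general $n$-fold statement then follows by an easy induction on $n$, using associativity of $\blok$ (which itself is inherited from associativity of multiplication of power series).

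For the second identity, I would simply unfold the definition~\eqref{def:connectedprod} of the connected product and apply linearity of the $q$-bracket:
\[
\langle f_1 \cp \cdots \cp f_n\rangle_q
\= \sum_{\alpha \in \Pi(n)} \mu(\alpha,\mathbf{1}_n)\, \Bigl\langle \bigblok_{A\in\alpha} f_A \Bigr\rangle_q.
\]
By the first bullet point, already proved, each $\vec{u}$-bracket $\langle \bigblok_{A\in\alpha} f_A\rangle_q$ equals $\prod_{A\in\alpha} \langle f_A\rangle_q$. Substituting this in yields exactly the right-hand side of \cref{def:qbrac}(\ref{eq:conbrac}), which is the connected $q$-bracket $\langle f_1 \otimes \cdots \otimes f_n\rangle_q$.

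There is no real obstacle here; the work has been done upstream. The only thing to be slightly careful about is that both sides of each bullet point are multilinear in $f_1,\ldots,f_n$, so no convergence or well-definedness issues arise once one checks that the sums over $\Pi(n)$ appearing in the definition of the connected product are finite sums of elements of $\q^{\partitions}$ (which they are, since $\Pi(n)$ is finite). In some sense this proposition is the whole point of introducing $\blok$ and $\cp$: they are designed to make the $q$-bracket multiplicative (respectively, to realize the connected $q$-bracket at the level of functions on partitions), so the proof reduces to checking that the definitions match up.
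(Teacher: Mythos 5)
Your proposal is correct and follows essentially the same route as the paper's own proof: the first bullet is obtained from \cref{def:inducedprod} together with the fact that the substitution $u_i=q^i$ is multiplicative (plus induction), and the second follows by expanding \eqref{def:connectedprod}, applying the first bullet, and matching the result with \cref{def:qbrac}(\ref{eq:conbrac}). No changes needed.
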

\begin{proof}
\sloppy
Both statements follow directly from the definitions. For the first, note that for all~${f,g\in \QtoP}$ one has
\[\langle f\blok g\rangle_q \= \langle f\rangle_{\vec{u}}\langle g\rangle_{\vec{u}}|_{u_i=q^i} \= \langle f\rangle_{\vec{u}}|_{u_i=q^i}\langle g\rangle_{\vec{u}}|_{u_i=q^i} \= \langle f\rangle_q\langle g\rangle_q,\]
so that the statement follows inductively. The second follows from the first, as
\begin{align}\displaystyle \langle f_1\cp \ldots\cp  f_n\rangle_q &\= \sum_{\alpha\in\Pi(n)} \mob \prod_{A\in\alpha} \langle f_A\rangle_q \= \langle f_1 \otimes \cdots \otimes f_n\rangle_q\, .\qedhere\end{align}
\end{proof}
\begin{remark}\label{rk:r} Let~$\mathcal{R}$ be the space of functions having a quasimodular form as~$q$-bracket, i.e.~${\mathcal{R}=\langle\, \cdot\,\rangle_q^{-1}(\widetilde{M})}$. Then,~$\mathcal{R}$ is a graded algebra with multiplication given by the induced product. Namely, if~$f\in \mathcal{R}$ and~$\langle f \rangle_q\in \quasimodular_k$, we define the weight of~$f$ to be equal to~$k$. Note that if~$f,g\in \mathcal{R}$ and~$\langle f \rangle_q$ and~$\langle g \rangle_q$ are quasimodular forms of weight~$k$ and~$l$ respectively, then~$\langle f \blok g\rangle_q = \langle f\rangle_q\langle g\rangle_q$ is a quasimodular form of weight~$k+l$. 
\end{remark}

When establishing identities on the level of functions on partitions (before taking the $q$-bracket), it turns out to be very useful to express the connected product of pointwise products of elements of~$\QtoP$ in terms of connected and induced products. This can be done recursively using the following result. 
\begin{prop}\label{prop:connectedbracketrecursion}
For all~$f_1,\ldots f_n\in \QtoP$ one has
\begin{align}\label{eq:recursion}f_1f_2\cp f_3\cp f_4\cp \ldots\cp  f_n \=  f_1&\cp f_2\cp \ldots\cp  f_n \; \+\\
&\sum_{A\sqcup B=\{3,\ldots, n\}} (f_1\cp f_{A_1}\cp f_{A_2}\cp \ldots)\blok(f_2\cp f_{B_1}\cp f_{B_2}\cp \ldots),\end{align}
where $A_1,A_2,\ldots$ enumerate the elements of~$A$ (and similarly for~$B$). 
\end{prop}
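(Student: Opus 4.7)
The plan is to translate the identity via the $\vec u$-bracket, which is a linear isomorphism $\q^{\partitions}\xrightarrow{\sim}\q[[u_1,u_2,\ldots]]$, and then recognise the resulting statement as the standard Leibniz-type cumulant identity, which I would prove by the $\log/\exp$ generating function formalism.

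\textbf{Step 1 (translation to $\vec u$-brackets).} Define the connected $\vec u$-bracket analogously to Definition~\ref{def:qbrac}(i) by
\[\langle f_1\otimes\cdots\otimes f_n\rangle_{\vec u}\defis\sum_{\alpha\in\Pi(n)}\mob\prod_{A\in\alpha}\langle f_A\rangle_{\vec u}.\]
Since $\langle f\blok g\rangle_{\vec u}=\langle f\rangle_{\vec u}\langle g\rangle_{\vec u}$ by Definition~\ref{def:inducedprod}, the argument of Proposition~\ref{prop:prod} applied verbatim yields $\langle f_1\cp\cdots\cp f_n\rangle_{\vec u}=\langle f_1\otimes\cdots\otimes f_n\rangle_{\vec u}$. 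As $\langle\cdot\rangle_{\vec u}$ is a vector-space isomorphism, the proposition is equivalent to the scalar (in fact $\q[[u_1,u_2,\ldots]]$-valued) identity
\[\langle f_1 f_2\otimes f_3\otimes\cdots\otimes f_n\rangle_{\vec u}\=\langle f_1\otimes f_2\otimes\cdots\otimes f_n\rangle_{\vec u}\+\sum_{A\sqcup B=\{3,\ldots,n\}}\langle f_1\otimes f_A\rangle_{\vec u}\,\langle f_2\otimes f_B\rangle_{\vec u},\]
where $\langle f_1\otimes f_A\rangle_{\vec u}$ abbreviates $\langle f_1\otimes\bigotimes_{a\in A}f_a\rangle_{\vec u}$ and similarly for $B$.

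\textbf{Step 2 (cumulant Leibniz).} Introduce formal variables $t_1,\ldots,t_n$ and set
\[Z(t)\defis\Bigl\langle\exp\!\Bigl(\sum_{i=1}^n t_i f_i\Bigr)\Bigr\rangle_{\vec u},\qquad W(t)\defis\log Z(t),\]
which is a well-defined element of $\q[[u_1,u_2,\ldots]][[t_1,\ldots,t_n]]$ since $Z(0)=1$. The $\vec u$-analog of Definition~\ref{def:qbrac}(ii) identifies $\langle f_{i_1}\otimes\cdots\otimes f_{i_k}\rangle_{\vec u}$ with the coefficient of $t_{i_1}\cdots t_{i_k}$ in $W$; its proof is purely formal, using only the M\"obius recursion and $\log\circ\exp=\mathrm{id}$, and carries over from the $q$-bracket case. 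Starting from the elementary identity
\[\frac{\partial_{t_1}\partial_{t_2}Z}{Z}\=\partial_{t_1}\partial_{t_2}W\+(\partial_{t_1}W)(\partial_{t_2}W),\]
I apply $\partial_{t_3}\cdots\partial_{t_n}$ and evaluate at $t=0$. The left-hand side becomes $\langle f_1 f_2\otimes f_3\otimes\cdots\otimes f_n\rangle_{\vec u}$ (since $\partial_{t_1}\partial_{t_2}Z=\langle f_1 f_2\exp(\sum t_i f_i)\rangle_{\vec u}$), the first summand on the right becomes $\langle f_1\otimes\cdots\otimes f_n\rangle_{\vec u}$, and the Leibniz rule distributes the derivatives $\partial_{t_3},\ldots,\partial_{t_n}$ across $(\partial_{t_1}W)(\partial_{t_2}W)$ to yield exactly $\sum_{A\sqcup B}\langle f_1\otimes f_A\rangle_{\vec u}\langle f_2\otimes f_B\rangle_{\vec u}$.

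\textbf{Main obstacle.} The real content lies not in either step individually but in verifying the $\vec u$-version of Definition~\ref{def:qbrac}(ii): this is the bridge between the M\"obius-style definition of the connected bracket and the generating-function identity that does the work. A more direct combinatorial alternative is to expand both sides of the proposition using Proposition~\ref{prop:defblok2} for the induced product and then invoke M\"obius inversion on $\Pi(n)$; I expect this to produce the same cancellation, but with considerably more bookkeeping than the generating-function route.
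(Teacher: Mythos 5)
Your argument is correct, but it takes a genuinely different route from the paper. The paper proves the identity entirely inside~$\q^{\partitions}$: it expands both sides as linear combinations of terms~$\bigblok_{C\in\gamma}f_C$ over set partitions~$\gamma\in\Pi(n)$ and compares coefficients, treating separately the case where~$1$ and~$2$ lie in the same block (both coefficients equal the relevant M\"obius value) and the case where they lie in different blocks (the right-hand coefficient collapses to $\mu(\gamma,\mathbf{1})+\sum_i\binom{\ell-2}{i}(-1)^i i!\,(-1)^{\ell-i-2}(\ell-i-2)!=0$, matching the vanishing on the left). You instead transport the statement through the~$\vec{u}$-bracket isomorphism and prove the resulting cumulant Leibniz identity by the $\log/\exp$ formalism; this is shorter and conceptually transparent (it is literally the identity $\partial_1\partial_2\log Z=\partial_1\partial_2 Z/Z-(\partial_1\log Z)(\partial_2\log Z)$ hit with~$\partial_3\cdots\partial_n$), but it rests on the $\vec{u}$-analogue of the equivalence of Definition~\ref{def:qbrac}(i) and~(ii), i.e.\ the moment--cumulant relation, which the paper only asserts (without proof) for the~$q$-bracket; as you note, that equivalence is purely formal and does carry over, so this is a legitimate dependency rather than a gap. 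Two points worth tightening if you write this up: the identification of the left-hand side with~$\langle f_1f_2\otimes f_3\otimes\cdots\otimes f_n\rangle_{\vec u}$ is cleanest if you introduce an auxiliary variable~$s$ coupled to the single function~$g=f_1f_2$ and observe that $[\partial_{t_1}\partial_{t_2}Z/Z]|_{t_1=t_2=0}$ coincides with $\partial_s\log\langle\exp(sg+\sum_{i\geq 3}t_if_i)\rangle_{\vec u}|_{s=0}$, so that the further derivatives extract exactly the connected bracket with~$g$ in one slot; and you should note explicitly that the sum over $A\sqcup B=\{3,\ldots,n\}$ produced by the Leibniz rule includes the empty parts, matching the statement. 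What the paper's coefficient-comparison buys in exchange is that it never leaves the partition-lattice combinatorics and needs no generating-function apparatus, which fits its stated principle of working in~$\q^{\partitions}$ before any bracket is taken.
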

\begin{proof}
Observe that both sides of the equation in the statement are a linear combination of terms of the form~$\bigblok_{C\in \gamma}f_C$ over~$\gamma\in \Pi(n)$. We determine the coefficient of such a term on both sides of the equation.

First of all, assume~$\gamma$ is such that~$\{1,2\}\subset C$ for some~$C\in \gamma$. Then, on the right hand side such a term only occurs in~$f_1\cp \ldots\cp  f_n$ with coefficient~$\mu(\gamma,\mathbf{1})$. Moreover, let~$\tilde\gamma\in \Pi(n-1)$ be given by~$\gamma\cap {\{2,\ldots,n\}}$ subject to replacing~$i$ by~$i-1$ for all~$i=2,\ldots, n$. Note that the coefficient on the left-hand side equals~$\mu(\tilde\gamma,\mathbf{1})$. As~$\ell(\tilde\gamma)=\ell(\gamma)$, the coefficients on both sides agree. 

Next, assume~$C_1,C_2\in \gamma$ with~$1\in C_1$ and~$2\in C_2\,$. Then, the coefficient of~$\bigblok_{C\in \gamma}f_C$ on right-hand side of~(\ref{eq:recursion}) equals
\begin{align}\label{eq:doublemobius}\mu(\gamma,\mathbf{1})+\sum \mu(\gamma|_{A},\mathbf{1})\mu(\gamma|_B,\mathbf{1}),\end{align}
where the sum is over all~$I\subset \{2,3,\ldots,\ell(\gamma)\}$ and~$A$ and~$B$ are given by~$A=C_1\cup\bigcup_{i\in I}C_i$ and~$B=C_2\cup\bigcup_{i\in I^c}C_i$. Letting~$i$ be the number of elements of~$I$, we find that~(\ref{eq:doublemobius}) equals
\begin{align}&\mu(\gamma,\mathbf{1})+\sum_{i=0}^{\ell(\gamma)-2} \binom{\ell(\gamma)-2}{i}\cdot(-1)^{i}i!\cdot(-1)^{\ell(\gamma)-i-2}(\ell(\gamma)-i-2)!\\
\=&\mu(\gamma,\mathbf{1})+\sum_{i=0}^{\ell(\gamma)-2} (\ell(\gamma)-2)!(-1)^{\ell(\gamma)-2}\\
\=&\mu(\gamma,\mathbf{1})-\mu(\gamma,\mathbf{1})=0.
\end{align}
Correspondingly, the coefficient of~$\bigblok_{C\in \gamma}\vec{f_C}$  on the left-hand side of~(\ref{eq:recursion}) vanishes if there are~$C_1,C_2\in \gamma$ with~$1\in C_1$ and~$2\in C_2\,$. 
\end{proof}

\paragraph{Quasimodularity of pointwise products of moment functions}
Not only do the moment functions~$S_k$ admit quasimodular~$q$-brackets, but also the homogeneous polynomials in the moment functions admit quasimodular~$q$-brackets; here, each moment function~$S_k$ has weight~$k$ in accordance with the fact that~$\langle S_k\rangle_q$ has weight~$k$. Given a tuple~$\vec{k}=(k_1,...,k_n)$ of even integers, we write~$S_{\vec{k}} = S_{k_1}\cdots S_{k_n}.$ Note that, as a vector space,~$\SA$ is spanned by these functions~$S_{\vec{k}}$. We provide two approaches to proving the quasimodularity of the~$q$-brackets of the~$S_{\vec{k}}$. First, we give a direct proof of the statement in \cref{Sisquasimodular}, after which, in accordance with our main principle of establishing all identities before taking the~$q$-bracket, we prove a more general result which will be used frequently in the next section. 
\begin{thm}\label{Sisquasimodular}
The algebra~$\SA$ is a quasimodular algebra. More precisely, for $\vec{k}\in (2\n)^n$ one has
\begin{align}
\langle S_{\vec{k}}\rangle_q &\= \sum_{\alpha \in \Pi(n)}\prod_{A\in \alpha} D^{\ell(A)-1}G_{|\vec{k}_A|-2\ell(A)+2}\, . \label{eq:conSk2}\end{align}
\end{thm}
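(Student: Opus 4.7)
The plan is to reduce the pointwise computation to the connected $q$-bracket, and then to compute the connected bracket directly via the exponential generating function characterization in Definition~\ref{def:qbrac}(\ref{eq:conbraciii}). By the second family of identities following Definition~\ref{def:qbrac}, we have
\[
\langle S_{\vec{k}}\rangle_q \= \sum_{\alpha\in\Pi(n)}\prod_{A\in\alpha}\langle \otimes_{a\in A} S_{k_a}\rangle_q,
\]
so it suffices to prove that for each $n\geq 1$ and tuple $\vec{k}=(k_1,\ldots,k_n)$ of positive even integers,
\[
\langle S_{k_1}\otimes\cdots\otimes S_{k_n}\rangle_q \= D^{n-1}G_{|\vec{k}|-2n+2},
\]
where $|\vec{k}|=k_1+\cdots+k_n$. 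Substituting this into the set-partition sum then yields the claimed formula.

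To compute the connected bracket, I would exploit the rewriting \eqref{def:sk1} of $S_k$ as a linear function of the multiplicities, together with the $\vec{u}$-bracket. Writing $y_m:=\sum_i x_i m^{k_i-1}$ and using that a partition is determined by its multiplicities,
\[
\sum_{\lambda\in\partitions}\exp\Bigl(\sum_{i=1}^n x_i S_{k_i}(\lambda)\Bigr)u_{\lambda} \= \exp\Bigl(-\sum_i x_i\tfrac{B_{k_i}}{2k_i}\Bigr)\prod_{m\geq 1}\frac{1}{1-e^{y_m}u_m},
\]
while $\sum_{\lambda}u_\lambda=\prod_m(1-u_m)^{-1}$. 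Taking the logarithm of the resulting ratio gives
\[
\log\langle \exp(\textstyle\sum_i x_iS_{k_i})\rangle_{\vec{u}}
\=\,-\sum_i x_i\tfrac{B_{k_i}}{2k_i}\,+\,\sum_{m\geq 1}\log\frac{1-u_m}{1-e^{y_m}u_m}.
\]
By Definition~\ref{def:qbrac}(\ref{eq:conbraciii}) one extracts the coefficient of $x_1\cdots x_n$ and specializes $u_m=q^m$. The Bernoulli term contributes only for $n=1$. For $n\geq 1$, expanding $-\log(1-e^{y_m}u_m)=\sum_{r\geq 1}e^{ry_m}u_m^r/r$ and using that the coefficient of $x_1\cdots x_n$ in $e^{ry_m}$ equals $r^n m^{|\vec{k}|-n}$, one obtains
\[
\langle S_{k_1}\otimes\cdots\otimes S_{k_n}\rangle_q \= \sum_{m,r\geq 1} m^{|\vec{k}|-n}\,r^{n-1}\,q^{mr}.
\]

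The final step is to recognize this as $D^{n-1}G_{|\vec{k}|-2n+2}$. Applying $D=q\frac{d}{dq}$ to the Eisenstein series defined in~\eqref{def:eis} gives $D^{n-1}G_k=\sum_{m,r\geq 1}m^{k-1}(mr)^{n-1}q^{mr}$ (the Bernoulli constant is killed for $n\geq 2$, and for $n=1$ it reappears exactly so as to match $\langle S_{k_1}\rangle_q=G_{k_1}$), and setting $k=|\vec{k}|-2n+2$ reproduces the displayed sum. The main obstacle is the bookkeeping in the generating-function manipulation---keeping track of the Bernoulli constants so that the $n=1$ and $n\geq 2$ cases are uniformly captured, and verifying the $r^n$ combinatorial factor cleanly---but once this is done the result follows immediately. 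As an added benefit, this computation reveals that every $\langle S_{k_1}\otimes\cdots\otimes S_{k_n}\rangle_q$ is a single derivative of a single Eisenstein series, which makes quasimodularity of $\mathcal{S}$ manifest.
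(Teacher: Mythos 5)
Your proposal is correct and follows essentially the same route as the paper: reduce to the connected bracket $\langle S_{k_1}\otimes\cdots\otimes S_{k_n}\rangle_q = D^{n-1}G_{|\vec{k}|-2n+2}$ via the partition-sum identity, compute it from the logarithm of the Euler-type product using Definition~\ref{def:qbrac}(\ref{eq:conbraciii}), and recognize the resulting double sum as a derivative of an Eisenstein series. The only (harmless) differences are bookkeeping: you keep the Bernoulli constants and use variables $x_i$ indexed by the factors from the start, whereas the paper works with the constant-free $S_k^0$, invokes \cref{lem:constantconnected} for $n\geq 2$ and the $n=1$ propositions separately, and handles repeated $k_i$ by introducing extra variables.
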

\begin{proof}
Observe that it suffices to show that
\begin{equation}\label{eq:conSk}\left\langle \bigotimes\nolimits_{k\in \vec{k}} S_{k}\right\rangle_q \= D^{n-1} G_{|\vec{k}|-2n+2}  \end{equation}
as~(\ref{eq:conSk2}) follows from~(\ref{eq:conSk}) by M\"obius inversion. Recall that~$\langle f_1\otimes \cdots \otimes f_n\rangle_q$ is the coefficient of~${x_1\cdots x_n}$ in~$\log\langle \exp(\sum_{i=1}^n x_i f_i)\rangle_q$ (see \cref{def:qbrac}(\ref{eq:conbraciii})). Consider~$S_k^0(\lambda)=\sum_{i=1}^\infty \lambda_i^{k-1}$ for all positive even~$k$. Euler's formula for the generating series of partitions
\[\sum_{\lambda\in \partitions} q^{|\lambda|} \= \prod_{m=1}^\infty (1-q^m)^{-1}\]
follows from writing $|\lambda|=\sum_{m\geq 1} m r_m(\lambda)$ and summing over all possible values of $r_1(\lambda), r_2(\lambda)$, etc. By the same idea, we find
\begin{align}\label{eq:expsk}\sum_{\lambda\in \partitions} \exp\left(\sum\nolimits_{k} S^0_{k}(\lambda)\, x_{k}\right)q^{|\lambda|} \= \prod_{m=1}^\infty \left(1-\exp\left(\sum\nolimits_{k} m^{k-1}x_{k}\right)q^m\right)^{-1}.\end{align}
The logarithm of this expression equals
\begin{equation}\label{eq:logof}\sum_{m,r=1}^\infty \exp\left(r\sum\nolimits_{k} m^{k-1}x_{k}\right)\frac{q^{mr}}{r}.\end{equation}
Now, assume all parts of~$\vec{k}$ are distinct. In the expansion of~(\ref{eq:logof}) the coefficient of~$x_{k_1}\cdots x_{k_n}$ equals
\[\sum_{m,r=1}^\infty  m^{|\vec{k}|-n} r^{n-1} q^{mr} \= D^{n-1}G_{|\vec{k}|-2n+2}\, .\]
Hence,
\[\left\langle \bigotimes\nolimits_{k\in \vec{k}} S_{k}^0\right\rangle_q \= D^{n-1} G_{|\vec{k}|-2n+2}.\]
By introducing distinct variables in~\cref{eq:expsk} for each repeated part of~$\vec{k}$ we obtain the same result if not all parts of~$\vec{k}$ are distinct.  

Note that if~$n\geq 2$, by \cref{lem:constantconnected} both sides of the equation do not change if one replaces~$S_k^0$ by~$S_k$. In case~$n=1$ we have established~(\ref{eq:conSk}) in \cref{prop:faulhaber} or in \cref{prop:ap2}. Hence,~(\ref{eq:conSk}) holds and~(\ref{eq:conSk2}) is then implied by M\"obius inversion.
\end{proof}

 Denoting 
\[ p_k(z) \= \begin{cases} \frac{z^{k-2}}{(k-2)!} & k\geq 0 \\ \frac{z^{-2}}{2} & k=0\end{cases}\]
and setting~$S_0(\lambda) \equiv 1$, one has the following expression for the generating series of the~$q$-bracket of the generators of~$\mathcal{S}$:
\begin{cor}
\[\sum_{k_1,\ldots,k_n\geq 0} \langle S_{k_1}\cdots S_{k_n}\rangle_q \, p_{k_1}(z_1)\cdots p_{k_n}(z_n) \= \sum_{\alpha\in \Pi(n)}\prod_{A\in \alpha} D^{|A|-1}\frac{P^{\text{even}}(\tau;z_A)}{2},\]
where~$z_A=\sum_{a\in A}z_a$ and 
\[P^{\text{even}}(\tau;z_1,\ldots, z_n) \= \frac{1}{2^n}\sum_{s\in \{-1,1\}^n} P(\tau,s_1z_1+\ldots+s_nz_n)\]
is the totally even part of the propagator in \upshape(\ref{eq:propagator}).
\end{cor}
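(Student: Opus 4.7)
The plan is to derive the corollary from \cref{Sisquasimodular} by taking generating series, with care given to the $k_i=0$ cases (where $S_0\equiv 1$ is not covered by the theorem directly).

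First I would split the outer sum by $T\=\{i\in[n]:k_i>0\}$, using $\langle S_{\vec{k}}\rangle_q\=\langle S_{\vec{k}_T}\rangle_q$ to reduce to the case $\vec{k}_T\in(2\n)^{|T|}$ where \cref{Sisquasimodular} applies. Substituting the theorem and distributing the sum across the blocks of each $\alpha\in\Pi(T)$ (which is legal because the $\vec{k}_A$'s are independent), the problem reduces to evaluating, for each block size $a$, the generating series
\[f_a(z_1,\ldots,z_a) \defis \sum_{\vec{k}\in(2\n)^a}G_{|\vec{k}|-2a+2}\prod_{i=1}^a p_{k_i}(z_i).\]

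The key technical lemma is the closed form
\[f_a(z_1,\ldots,z_a) \= \frac{P^{\mathrm{even}}(\tau;z_1,\ldots,z_a)}{2}\,-\,\frac{1}{2^a}\sum_{s\in\{\pm 1\}^a}p_0(s_1z_1+\ldots+s_az_a).\]
To prove it I would substitute $m_i\=k_i-2$ and observe that $\sum_{\vec{k}\in(2\n)^a,\,|\vec{k}|=K}\prod_i p_{k_i}(z_i)$ equals $\frac{1}{2^a}\sum_s p_{K-2a+2}(s_1z_1+\ldots+s_az_a)$, since the sign-average projects the multinomial expansion onto its totally even part. Summing this against $G_{K-2a+2}$ over even $K\geq 2a$ and invoking the defining expansion $\tfrac{1}{2}P(w)\=\sum_{k\geq 0,\text{ even}}G_kp_k(w)$ (with the convention $G_0\=1$) yields the claim, the correction being exactly the $k\=0$ contribution that is absent from $f_a$.

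Because $p_0$ is independent of $\tau$, the operator $D^{|A|-1}$ kills the correction term in $f_{|A|}$ whenever $|A|\geq 2$, so $D^{|A|-1}f_{|A|}(z_A)\=D^{|A|-1}\tfrac{P^{\mathrm{even}}(z_A)}{2}$ for every non-singleton block. For singleton blocks $\{i\}\in\alpha$ the correction $-p_0(z_i)$ survives and combines with the factor $\prod_{i\notin T}p_0(z_i)$ produced by the first step. Expanding $\tfrac{P(z_i)}{2}-p_0(z_i)$ over the singletons of $\alpha$ and indexing by the subset $T_2$ of singletons on which the $-p_0$ choice is made, the aggregated $p_0$-factors run over $U\defis T_2\cup([n]\setminus T)$; the elementary identity $\sum_{T_2\subset U}(-1)^{|T_2|}\=\delta_{U,\emptyset}$ then forces $U\=\emptyset$, i.e.\ $T\=[n]$ and $T_2\=\emptyset$. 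What remains is naturally indexed by a partition $\beta\in\Pi([n])$ whose singleton blocks correspond to indices where the $P/2$ choice was made and whose larger blocks come from the non-singleton blocks of $\alpha$, yielding precisely $\sum_{\beta}\prod_{B\in\beta}D^{|B|-1}\tfrac{P^{\mathrm{even}}(z_B)}{2}$ via $P^{\mathrm{even}}(z_i)/2\=P(z_i)/2$ for $|B|\=1$.

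The hard part will be the combinatorial identification in this last step: correctly matching the three-level sum over $T_2\subset T_1\subset T\subset[n]$ (with its signs) to the partition sum over $\beta$, while tracking the $D$-operators and the singleton versus non-singleton block structures simultaneously.
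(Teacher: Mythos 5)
Your proposal is correct and takes the route the paper intends: the corollary is stated without a written proof as the generating-series form of \cref{Sisquasimodular}, and your steps (splitting off the indices with $k_i=0$, the sign-average identity giving $f_a=\tfrac12 P^{\mathrm{even}}-\tfrac{1}{2^a}\sum_s p_0(s_1z_1+\cdots+s_az_a)$, the observation that $D$ kills the $\tau$-independent correction on non-singleton blocks, and the alternating sum over $T_2$ forcing $T=[n]$, $T_2=\emptyset$) are exactly the bookkeeping needed to pass from the theorem to the stated identity. The regrouping in your final step is sound---for a fixed monomial profile the triples $(T,\alpha,T_2)$ contributing to it are parametrized precisely by the subsets $T_2$ of the $p_0$-index set $U$, so the cancellation argument closes the proof.
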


\paragraph{Intermezzo: surjectivity of the~\texorpdfstring{$q$}{q}-bracket}
We deduce from \cref{Sisquasimodular} the surjectivity of the~$q$-bracket: every quasimodular form is the~$q$-bracket of some~$f\in \SA$. 

\begin{thm}\label{thm:surj}
The~$q$-bracket~$\langle\, \cdot\,\rangle_q:\SA\to \quasimodular$ is surjective. 
\end{thm}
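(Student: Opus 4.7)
The plan is to prove surjectivity by induction on the weight $w$, leveraging the explicit formula of \cref{Sisquasimodular}. The base cases $w\in\{0,2\}$ are immediate from $1=\langle 1\rangle_q$ and $G_2=\langle S_2\rangle_q$.

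For the inductive step at weight $w>2$, assume every quasimodular form of weight less than $w$ lies in the image. Since $\quasimodular=\q[G_2,G_4,G_6]$, it suffices to show that every monomial $G_2^a G_4^b G_6^c$ with $2a+4b+6c=w$ lies in the image. Applying \cref{Sisquasimodular} to the tuple $\vec{k}=(2,\dots,2,4,\dots,4,6,\dots,6)$ (with $a$ twos, $b$ fours, $c$ sixes) yields
\[
\langle S_2^a S_4^b S_6^c\rangle_q \= G_2^a G_4^b G_6^c \+ \sum_{\alpha\neq\mathrm{discrete}} \prod_{A\in\alpha} D^{\ell(A)-1} G_{|\vec{k}_A|-2\ell(A)+2}.
\]
Hence $G_2^a G_4^b G_6^c$ equals $\langle S_2^a S_4^b S_6^c\rangle_q$ (which lies in the image) minus a ``correction'' built from products of derivatives of Eisenstein series of total weight $w$. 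Expanding each $D^jG_k$ as a polynomial in $G_2,G_4,G_6$ via Ramanujan's identities (e.g.\ $DG_2=-2G_2^2+\tfrac{5}{6}G_4$, $DG_4=-8G_2G_4+\tfrac{7}{10}G_6$, $DG_6=-12G_2G_6+\tfrac{400}{7}G_4^2$), every correction term becomes a $\q$-linear combination of other weight-$w$ monomials in $G_2,G_4,G_6$.

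Collecting these equations for all $(a,b,c)$ with $2a+4b+6c=w$ gives a square linear system relating the elements $\langle S_2^a S_4^b S_6^c\rangle_q$ to the monomial basis $\{G_2^a G_4^b G_6^c\}$ of $\quasimodular_w$. To finish the induction, one verifies that this system has full rank (equivalently, that the transition matrix is invertible) via a combinatorial ordering argument exploiting the graded structure of the formula in the length $n=a+b+c$ of $\vec{k}$. The main obstacle will be controlling the coarsest-partition correction $D^{n-1}G_{w-2n+2}$, which lies in the same weight $w$ as the main term and hence cannot be reduced by the weight-induction hypothesis alone: its contribution to the diagonal entries of the transition matrix must be traced carefully through the Ramanujan identities to confirm nonvanishing, as illustrated by examples such as $\langle S_2^2\rangle_q=-G_2^2+\tfrac{5}{6}G_4$ where the diagonal coefficient $-1$ arises precisely from the interplay of $G_2^2$ and $DG_2$.
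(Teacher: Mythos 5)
There is a genuine gap at the heart of your argument: the invertibility of the transition matrix between the brackets $\langle S_2^aS_4^bS_6^c\rangle_q$ and the monomial basis $G_2^aG_4^bG_6^c$ of $\quasimodular_w$ is exactly the content of the theorem in your formulation, and you do not prove it --- you only assert that ``one verifies'' full rank via a combinatorial ordering, and you yourself flag the diagonal contributions as the main obstacle. Note also that your induction on the weight does no work here: every correction term $\prod_A D^{\ell(A)-1}G_{|\vec{k}_A|-2\ell(A)+2}$ has weight exactly $w$, so the inductive hypothesis is never invoked and the whole burden falls on the unproven full-rank claim. Moreover, the hoped-for triangularity fails in general: already in weight $8$ one has $\langle S_2S_6\rangle_q = G_2G_6+DG_6$ and $\langle S_4^2\rangle_q = G_4^2+DG_6$ with $DG_6=-12\,G_2G_6+\tfrac{400}{7}G_4^2$, so both rows involve both monomials $G_2G_6$ and $G_4^2$; one is forced to control genuine blocks (and, on the diagonal, cancellations such as the $-1$ in $\langle S_2^2\rangle_q=-G_2^2+\tfrac56 G_4$ could a priori produce a zero) for arbitrary weight, and no mechanism for doing so is given.

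For comparison, the paper sidesteps exactly this difficulty. It first uses $(D+G_2)\langle f\rangle_q=\langle S_2f\rangle_q$ together with the decomposition $\quasimodular_k^{(\leq p)}=\bigoplus_{r=0}^p\vartheta_x^rM_{k-2r}$ for $\vartheta_x=D+2xG_2$ with $x=\tfrac12\notin 2\z_{\geq 0}$ (\cref{prop:20}) to reduce surjectivity onto $\quasimodular$ to surjectivity onto the modular forms $M$. Then, since every modular form is a polynomial of degree at most two in Eisenstein series, it only needs to invert the finite system $\langle S_kS_l\rangle_q=G_kG_l+DG_{k+l-2}$ for $k+l=m$, whose determinant is computed explicitly to be $1-2^{m-3}\neq 0$. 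If you want to salvage your route, you would need either to prove the full-rank statement for the complete weight-$w$ system (a stronger and harder claim than surjectivity), or to perform a reduction of the kind the paper uses so that only a tractable subsystem has to be inverted.
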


Note that this is not obvious since the~$q$-bracket is not an algebra homomorphism. Denote by~$\vartheta_k:M_k\to M_{k+2}$ the Serre derivative, given by~$\vartheta_k=D+2kG_2$. Extend this notation by letting~$\vartheta_x:\quasimodular\to\quasimodular$ for~$x\in \q$ be given by~$\vartheta_x=D+2xG_2$. 
\begin{prop}\label{prop:20} Let~$x\in \q\backslash 2\z_{\geq 0}$. Then,
\[\quasimodular_k^{(\leq p)} \= \bigoplus_{r=0}^p \vartheta_x^r M_{k-2r}\, .\]
\end{prop}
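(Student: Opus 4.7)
The strategy is to reduce the direct sum decomposition to the injectivity of the natural summation map
\[ \Phi \colon \bigoplus_{r=0}^p M_{k-2r} \,\longrightarrow\, \quasimodular_k^{(\leq p)}, \qquad (f_0, \ldots, f_p) \,\longmapsto\, \sum_{r=0}^p \vartheta_x^r f_r, \]
and then to identify its kernel via the lowering operator $\mathfrak{d}$. Because $\quasimodular = M[G_2]$, the space $\quasimodular_k^{(\leq p)}$ is freely generated as an $M$-module by $1, G_2, \ldots, G_2^p$ placed in the appropriate weights, so $\dim \quasimodular_k^{(\leq p)} = \sum_{r=0}^p \dim M_{k-2r}$. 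This matches the dimension of the domain of $\Phi$, so it will suffice to prove $\Phi$ injective. The inclusion $\vartheta_x^r M_{k-2r} \subseteq \quasimodular_k^{(\leq r)}$ is immediate from the fact that $\vartheta_x = D + 2xG_2$ raises weight by $2$ and depth by at most $1$.

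To establish injectivity, I would first compute the commutator $[\mathfrak{d}, \vartheta_x]$. Since $\mathfrak{d}$ is a derivation with $\mathfrak{d}(G_2) = -\tfrac{1}{2}$, the commutator of $\mathfrak{d}$ with multiplication by $G_2$ is the scalar $-\tfrac12$; combined with $[\mathfrak{d}, D] = W$ from the $\sltwo$-triple, this gives $[\mathfrak{d}, \vartheta_x] = W - x$. Iterating via the commutator Leibniz rule, and using that $\vartheta_x^j \phi$ has weight $w + 2j$ when $\phi$ has weight $w$, yields
\[ [\mathfrak{d}, \vartheta_x^r] \;=\; r\bigl(W + r - 1 - x\bigr)\,\vartheta_x^{r-1} \qquad \text{on weight-homogeneous forms.} \]
Since $\mathfrak{d}f = 0$ for $f \in M_m$, induction on $r$ then gives the key identity
\[ \mathfrak{d}^r \vartheta_x^r f \;=\; r!\,\prod_{j=0}^{r-1}(m + j - x)\,f \qquad (f \in M_m,\ m = k - 2r). \]

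Finally, suppose a relation $\sum_r \vartheta_x^r f_r = 0$ with not all $f_r$ zero, and let $r_0$ be the largest index with $f_{r_0} \neq 0$. Applying $\mathfrak{d}^{r_0}$ to both sides kills every term with $r < r_0$, because $\vartheta_x^r f_r$ lies in depth at most $r$, while $\mathfrak{d}^{r_0}$ sends $\quasimodular^{(\leq r)}$ into $\quasimodular^{(\leq r - r_0)} = 0$. Only the $r_0$-term remains, yielding $r_0!\,\prod_{j=0}^{r_0-1}(k - 2r_0 + j - x)\, f_{r_0} = 0$. The hypothesis on $x$ guarantees that this scalar is nonzero whenever $M_{k-2r_0}$ is nontrivial, forcing $f_{r_0} = 0$ and contradicting maximality.

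The main obstacle in this plan is the commutator calculation leading to the explicit formula for $\mathfrak{d}^r \vartheta_x^r$; once this identity is in hand, the remainder is a dimension count combined with a triangularity argument with respect to the depth filtration, both of which follow formally from the filtered structure of $\quasimodular$.
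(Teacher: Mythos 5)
Your strategy is genuinely different from the paper's: the paper argues by induction on the depth, peeling off the top transformation coefficient $\phi_p\in \modular_{k-2p}$ of $\phi\in\quasimodular_k^{(\leq p)}$ and subtracting a multiple of $\vartheta_x^p\phi_p$, whereas you do a dimension count for surjectivity and prove injectivity with the lowering operator. Your computations are essentially right: $[\mathfrak{d},\vartheta_x]=W-x$, and for $f\in\modular_m$ one indeed gets $\mathfrak{d}^r\vartheta_x^rf=r!\prod_{j=0}^{r-1}(m+j-x)f$ (your displayed iterated commutator should read $r(W-r+1-x)\vartheta_x^{r-1}$ if, as in~(\ref{eq:D^n}), $W$ is applied after $\vartheta_x^{r-1}$, but this does not affect the key identity). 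The triangularity with respect to depth and the count $\dim\quasimodular_k^{(\leq p)}=\sum_{r=0}^p\dim \modular_{k-2r}$ are fine.

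The gap is the final claim that ``the hypothesis on $x$ guarantees that this scalar is nonzero''. The product $\prod_{j=0}^{r_0-1}(k-2r_0+j-x)$ runs over the \emph{consecutive} integers $k-2r_0,\dots,k-r_0-1$, which include odd values once $r_0\geq 2$, while $x\in\q\setminus 2\z_{\geq 0}$ still allows $x$ to be a positive odd integer. Concretely, take $x=1$, $k=4$, $p=2$: then $\vartheta_1^2(1)=\vartheta_1(2G_2)=2DG_2+4G_2^2=\tfrac53G_4$ is modular, so $\modular_4+\vartheta_1\modular_2+\vartheta_1^2\modular_0=\modular_4$ is one-dimensional and neither exhausts $\quasimodular_4^{(\leq 2)}$ (which contains $G_2^2$) nor is the sum direct. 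So this is not merely a hole in your argument: \cref{prop:20} as stated fails for such $x$, and your computation pinpoints the correct hypothesis, namely $x\notin\{k-2r,\dots,k-r-1\}$ for every $r\leq p$ with $\modular_{k-2r}\neq 0$ --- in particular any $x\in\q\setminus\z_{\geq 0}$ works, which covers the only case the paper uses ($x=\tfrac12$, in the proof of \cref{thm:surj}). Note that the paper's own proof has the same blind spot: it verifies that $\vartheta_x$ raises the depth only when applied to a \emph{modular} form, which does not suffice for the iterates $\vartheta_x^r$.
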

\begin{proof}
Let~$f\in \modular_k$ with~$f\neq 0$. 
Observe that~$\vartheta_xf$ is modular precisely if~$k=x$. By our assumption on~$x$, this is not the case. Hence,~$\vartheta_x$ increases the depth strictly by one. The result follows by induction on~$p$ by the same argument as in \cite[Proposition 20]{Zag08}. Namely, if~$\phi\in \quasimodular_k^{\leq p}$, then the last coefficient~$\phi_p$ in the quasimodular transformation~(\ref{eq:modtrans}) is a modular form of weight~$k-2p$. Hence,~$\phi$ is a linear combination of~$\vartheta_x^p \phi_p$ and a quasimodular form of depth strictly smaller than~$p$. 
\end{proof}

\begin{proof}[Proof of \cref{thm:surj}] First observe that~$(D+\Eis_2)\langle f\rangle_q = \langle S_2f\rangle_q\,$. As~$D+\Eis_2$ is not a Serre derivative, by Proposition~\ref{prop:20} it follows that it suffices to show that the~$q$-bracket is surjective on modular forms. Every modular form can be written as a polynomial of degree at most~$2$ in Eisenstein series, see \cite[Section 5]{Zag77}.
Hence, we show that the~$q$-bracket is surjective on polynomials of degree at most~$2$ in all Eisenstein series, possibly involving the quasimodular Eisenstein series~$\Eis_2$. 

Eisenstein series are in the image of the~$q$-bracket by \Cref{Sisquasimodular}. Note that~$D\Eis_{k}$ can be written a polynomial of degree~$2$ in Eisenstein series, explicitly:
\[D\Eis_k \= \frac{k+3}{2(k+1)}G_{k+2}-\sum_{\substack{0<j<k \\ j\equiv 1\, (2)}}\binom{k}{j} G_{j+1}G_{k+1-j}\, .\]
Also, we have an explicit formula for the~$q$-bracket of~$S_kS_l$:
\begin{equation}\label{eq:SkSl}\langle S_{k}S_{l}\rangle_q \= \Eis_k\Eis_l+D\Eis_{k+l-2}\, ,\end{equation}
 so that this~$q$-bracket is expressible as a polynomial of degree at most~$2$ in the Eisenstein series. 
 
 Now fix an integer~$m\geq 4$. We consider the equations~(\ref{eq:SkSl}) for all~$k+l=m$. It suffices to show that we can invert these equations, i.e., write~$\Eis_k\Eis_l$ as a linear combination of~$q$-brackets of products of at most two~$S_i$. A direct computation shows that the determinant of the matrix corresponding to the equations above equals 
\[1-\sum_{\substack{0<j<m \\ j\equiv 1\, (2)}}\binom{m}{j}\=1-2^{m-3}\;<\;0.\] 
Hence, the~$q$-bracket is surjective.
\end{proof}
\begin{remark}
Only the last step of above proof uses the explicit formula~(\ref{eq:SkSl}) for the derivative of Eisenstein series. The author expects one could conclude the proof by an abstract argument, but he is not aware of such an argument. 
\end{remark}

\paragraph{The connected product of moment functions}

In the second approach we compute the connected product~$S_{k_1}\cp \ldots\cp S_{k_n}$, which by \cref{prop:prod} yields the left-hand side of~(\ref{eq:conSk2}) after taking the~$q$-bracket. The result is formulated in \cref{prop:computephi} below and depends on two technical lemma's which we state first. 

In order to do so, we start by introducing the following notation. For a partition~$\lambda$ and a subset~$A$ of~$\n$, we write~$\lambda|_A$ for the partition where a part of size~$m$ occurs~$r_m(\lambda)$ times if~$m\in A$ and does not occur if~$m\not \in A$. For example,~$(5,4,3,3,1,1,1)|_{\{4,1\}}=(4,1,1,1).$ 
\begin{defn}
We say~$f:\partitions\to \q$ is \emph{supported on}~$A$ if~$f(\lambda)=f(\lambda|_A)$ for all partitions~$\lambda$. 
\end{defn} 

The first lemma expresses the induced product of two functions~$F$ and~$G$ supported on disjoint sets as the \emph{pointwise product} of these functions, and of two functions~$F$ and~$G$ supported on the same singleton set as a \emph{convolution product} of functions. 

\begin{lem}\label{lem:disjointsupport} Suppose~$X$ and~$Y$ are subsets of~$\n$ and~$F,F',G,G':\partitions\to\q$ are supported on~$X, X, Y$ and~$Y$, respectively. 
Then
\begin{enumerate}[\upshape (i)]
\item $F\blok F'$ is supported on~$X$;
\item \label{case:i} If~$X$ and~$Y$ are disjoint, then 
\[FG\blok F'G'\=(F\blok F')(G\blok G'), \quad\quad \text{in particular} \quad \quad F\blok G \=FG;\] 
\item \label{case:ii} If~$X=Y=\{m\}$, then
	\[(F\blok G)(\lambda)\=\partial (f\conv g)(r_m(\lambda)),\]
where~$f$ and~$g$ are such that~$F(\lambda)=f(r_m(\lambda))$,~$G(\lambda)=g(r_m(\lambda))$. 
\end{enumerate}

\end{lem}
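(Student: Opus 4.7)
The plan is to argue everything through the $\vec u$-bracket and its multiplicativity $\langle F\blok G\rangle_{\vec u}=\langle F\rangle_{\vec u}\langle G\rangle_{\vec u}$. The basic input is that for any $X\subset\n$, each partition decomposes uniquely as $\lambda=\lambda|_X\cup\lambda|_{X^c}$ with $u_\lambda=u_{\lambda|_X}u_{\lambda|_{X^c}}$, so $\sum_{\lambda\in\partitions}u_\lambda$ factors across the split. Consequently, when $F$ is supported on $X$, the same factorization shows that $\langle F\rangle_{\vec u}$ is a power series in the variables $\{u_m:m\in X\}$ only, namely
\begin{equation*}
\langle F\rangle_{\vec u}\=\frac{\sum_{\nu}F(\nu)\,u_\nu}{\sum_{\nu}u_\nu},
\end{equation*}
where both sums run over partitions $\nu$ with all parts in $X$.

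For (i), $\langle F\blok F'\rangle_{\vec u}=\langle F\rangle_{\vec u}\langle F'\rangle_{\vec u}$ is again a power series in $\{u_m:m\in X\}$ alone, and inverting the $\vec u$-bracket (an isomorphism $\QtoP\xrightarrow{\sim}\q[[u_1,u_2,\ldots]]$) shows that $(F\blok F')(\lambda)$ depends only on $\lambda|_X$. For (ii), when $X\cap Y=\emptyset$ the analogous triple factorization $\lambda=\lambda|_X\cup\lambda|_Y\cup\lambda|_{(X\cup Y)^c}$ gives $\langle FG\rangle_{\vec u}=\langle F\rangle_{\vec u}\langle G\rangle_{\vec u}$, whence $F\blok G=FG$. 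The general identity then drops out: both sides of $FG\blok F'G'=(F\blok F')(G\blok G')$ have $\vec u$-bracket equal to $\langle F\rangle_{\vec u}\langle F'\rangle_{\vec u}\langle G\rangle_{\vec u}\langle G'\rangle_{\vec u}$, provided one invokes part (i) to know that $F\blok F'$ and $G\blok G'$ retain disjoint supports so that the $F\blok G=FG$ step applies to them.

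For (iii), specializing the formula above to $X=Y=\{m\}$ yields
\begin{equation*}
\langle F\rangle_{\vec u}\=(1-u_m)\sum_{r\geq 0}f(r)u_m^r\=\sum_{r\geq 1}\partial f(r)\,u_m^r,
\end{equation*}
using the convention $f(0)=0$, and likewise for $G$. Multiplying gives $\langle F\blok G\rangle_{\vec u}=\sum_{n\geq 2}(\partial f\conv\partial g)(n)\,u_m^n$; writing $(F\blok G)(\lambda)=h(r_m(\lambda))$, which is justified by (i), one reads off $h(0)=0$ together with $\partial h=\partial f\conv\partial g$. The main step is identifying this $h$ with $\partial(f\conv g)$: the convolution Leibniz rule \eqref{convleibniz} applied twice gives $\partial f\conv\partial g=\partial^2(f\conv g)$, and summing from $k=1$ to $n$, together with the vanishing $\partial(f\conv g)(1)=0$, recovers $h(n)=\partial(f\conv g)(n)$. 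The only delicate point is the bookkeeping of boundary values at $n=0,1$, which pins down the integration constant.
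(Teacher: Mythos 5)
Your proof is correct, but it runs along a genuinely different track from the paper's. The paper proves all three parts directly on the partition side, using the M\"obius-function formula $(F\blok G)(\lambda)=\sum_{\alpha\cup\beta\cup\gamma=\lambda}F(\alpha)G(\beta)\mu(\gamma)$ of \cref{prop:defblok2}: the sums are factored according to $\lambda|_X$, $\lambda|_Y$ and the complement, and part (iii) is a two-line computation setting $i=r_m(\alpha)$, $j=r_m(\beta)$, which produces $\partial(f\conv g)(r_m(\lambda))$ with no detour through generating series. You instead work entirely on the $\vec u$-bracket side: support on $X$ translates into ``power series in $\{u_m:m\in X\}$ only'' via the factorization of $\sum_\lambda u_\lambda$ over the split $\lambda=\lambda|_X\cup\lambda|_{X^c}$, multiplicativity of $\langle\,\cdot\,\rangle_{\vec u}$ on $\blok$ plus injectivity then give (i) and (ii), and (iii) reduces to multiplying the series $\sum_r\partial f(r)u_m^r$ (in effect re-deriving \cref{prop:phibrac}) and integrating back using \eqref{convleibniz}. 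Both routes are sound; yours makes (i) and (ii) essentially transparent, at the cost of needing the converse observation in (i) that a series involving only the $X$-variables pulls back under the $\vec u$-bracket to a function supported on $X$ --- this does follow from the same factorization of $\sum_\lambda u_\lambda$, but state it explicitly rather than leaving it inside ``inverting the bracket''. Two further small remarks: your boundary bookkeeping in (iii) is fine (the telescoping $h(n)=\sum_{k\le n}\partial h(k)$ together with $\partial^2(f\conv g)=\partial f\conv\partial g$ pins $h=\partial(f\conv g)$ down), and your explicit use of the convention $f(0)=g(0)=0$ is exactly the convention the paper's own proof uses implicitly (without it the identity in (iii) fails, e.g.\ for $F=G=1$), so making it visible is a point in your favour rather than a gap.
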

\begin{proof} By \cref{prop:defblok2}, we have
\[(F\blok F')(\lambda) \= \sum_{\alpha\cup \beta\cup\gamma=\lambda}(-1)^{\ell(\gamma)}\,F(\alpha)\,F'(\beta),\]
where it is understood that~$\gamma$ is a strict partition. We have that
\begin{align}(F\blok F')(\lambda) &\= \Bigl(\,\sum_{\alpha\cup \beta\cup\gamma=\lambda|_{X}}(-1)^{\ell(\gamma)}\,F(\alpha)\,F'(\beta)\Bigr)\Bigl(\,\sum_{\alpha\cup \beta\cup\gamma=\lambda|_{X^c}}(-1)^{\ell(\gamma)}\Bigr) \\
&\= (F\blok F')(\lambda|_{X})\cdot(1\blok 1)(\lambda|_{X^c}).\end{align}
Recall~$f\blok 1=f$ for all functions~$f$, hence~$(F\blok F')(\lambda) = (F\blok F')(\lambda|_{X})$, which is the first statement. 

Next, we have that
\[(FG\blok F'G')(\lambda) \= \sum_{\alpha\cup \beta\cup\gamma=\lambda} (-1)^{\ell(\gamma)}\, (FG)(\alpha)\, (F'G')(\beta),\]
where again it is understood that~$\gamma$ is a strict partition. Using the fact that~$F,F',G$ and~$G'$ are supported on~$X,X,Y$ and~$Y$, respectively, we obtain
\begin{equation}\label{eq:FGblokF'G'} (FG\blok F'G')(\lambda) \= \sum_{\alpha\cup \beta\cup\gamma=\lambda} (-1)^{\ell(\gamma|_X)+\ell(\gamma|_Y)+\ell(\gamma|_{Z})}\,F(\alpha|_X)\,G(\alpha|_Y)\,F'(\beta|_X)\,G'(\beta|_Y),\end{equation}
where~$Z$ denotes the complement of~$X\cup Y$ in~$\n$. We factor the right-hand side of~(\ref{eq:FGblokF'G'}) as
\[\Bigl(\,\sum_{\alpha\cup \beta\cup\gamma=\lambda|_X} (-1)^{\ell(\gamma)}\,F(\alpha)\,F'(\beta)\Bigr)\Bigl(\,\sum_{\alpha\cup \beta\cup\gamma=\lambda|_Y} (-1)^{\ell(\gamma)}\,G(\alpha)\,G'(\beta)\Bigr)\Bigl(\sum_{\alpha\cup \beta\cup\gamma=\lambda|_{Z}} (-1)^{\ell(\gamma)}\Bigr).\]
By definition of the product~$\blok$, we conclude 
\[(FG\blok F'G')(\lambda) \= (F\blok F')(\lambda|_X)\,(G\blok G')(\lambda|_Y)\,(1\blok 1)(\lambda|_{Z})\=(F\blok F')(\lambda)\,(G\blok G')(\lambda).\] 

By taking~$F'$ and~$G$ to be the constant function~1 (which is supported on every~$X$ and~$Y$), we see that~$F\blok G' =FG'$ is implied by~$FG\blok F'G'=(F\blok F')(G\blok G')$.

Next, for~\ref{case:ii} we have
\begin{align}
(F\blok G)(\lambda) &= \sum_{\alpha\cup\beta\cup \gamma=\lambda}  (-1)^{\ell(\gamma)} f(r_m(\alpha))\,g(r_m(\beta))\\
&\= \sum_{\alpha\cup\beta\cup \gamma=\lambda|_{\{m\}}} (-1)^{\ell(\gamma)} f(r_m(\alpha))\,g(r_m(\beta)) \end{align}
Letting~$i=r_m(\alpha)$ and~$j=r_m(\beta)$, we have
\begin{align} (F\blok G)(\lambda)&= \sum_{i+j=r_m(\lambda)} f(i)\,g(j)-\sum_{i+j+1=r_m(\lambda)}f(i)\,g(j)\\
&\= (f\conv g)(r_m(\lambda))-(f\conv g)(r_m(\lambda)-1)\\
&\= \partial (f\conv g)(r_m(\lambda)). \qedhere
\end{align}
\end{proof}

The second lemma is concerned with the vanishing of certain sums of the M\"obius functions of \emph{set} partitions. Given~$\alpha\in \Pi(n)$ and a subset~$Z$ of~$[n]$, we let 
\[\alpha|_Z=\{A\cap Z \mid A\in \alpha \text{ s.t.~} A\cap Z\neq \emptyset\} \in \Pi(Z),\]
 where~$\Pi(Z)$ denotes the set of all partitions of the set~$Z$. Observe that \[\ell(\alpha)\=\ell(\alpha|_Z)+|\{A\in \alpha\mid A\cap Z=\emptyset\}|,\] in particular~$\ell(\alpha|_Z)\leq \ell(\alpha)$. 
Given~$Z\subset[n]$, define an equivalence relation on~$\Pi(n)$ by writing~$\alpha\sim \beta$ if
\begin{equation}\label{eq:equivalence} \alpha|_Z=\beta|_Z \quad \text{and} \quad \alpha|_{Z^c} = \beta|_{Z^c}.\end{equation}
\begin{lem}\label{lem:hypergeometric} Let $Z\subseteq [n]$. If~$Z\neq \emptyset$ and $Z\neq [n]$, then for all~$\beta\in \Pi(n)$ we have
\[\sum_{\alpha\sim\beta}\mob=0.\]
\end{lem}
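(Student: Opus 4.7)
The plan is to make the equivalence class $\{\alpha\in\Pi(n):\alpha\sim\beta\}$ completely explicit and then identify the resulting M\"obius sum as the $x^py^q$-coefficient of $\log(1+x+y+xy)$, whose vanishing for $p,q\geq 1$ will follow at once from the factorisation $1+x+y+xy=(1+x)(1+y)$.

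First I would write $\beta|_Z=\{A_1,\ldots,A_p\}$ and $\beta|_{Z^c}=\{B_1,\ldots,B_q\}$; since $Z$ and $Z^c$ are both non-empty, $p,q\geq 1$. For any $\alpha\sim\beta$ and any block $C\in\alpha$, if $C\cap Z$ is non-empty then it is a block of $\alpha|_Z=\beta|_Z$ and therefore equals some $A_i$, and analogously for $C\cap Z^c$. Since the blocks of $\alpha$ are pairwise disjoint, each $A_i$ (and each $B_j$) arises as such an intersection for a \emph{unique} block of $\alpha$. Consequently $\alpha$ is encoded by a partial matching: subsets $I\subseteq[p]$ and $J\subseteq[q]$ with $|I|=|J|=k$ together with a bijection $\sigma\colon I\to J$, the blocks of $\alpha$ then being $A_i\cup B_{\sigma(i)}$ for $i\in I$, $A_i$ for $i\notin I$, and $B_j$ for $j\notin J$. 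In particular $\ell(\alpha)=p+q-k$ and the number of such $\alpha$ with matching size $k$ is $\binom{p}{k}\binom{q}{k}k!$, so
\[
\sum_{\alpha\sim\beta}\mob\=\sum_{k=0}^{\min(p,q)}(-1)^{p+q-k-1}(p+q-k-1)!\,\binom{p}{k}\binom{q}{k}\,k!.
\]

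Next I would recognise this sum as a single logarithm coefficient. A multinomial expansion of $G(x,y):=x+y+xy$ gives $[x^py^q]G^\ell=\frac{\ell!}{(\ell-p)!(\ell-q)!(p+q-\ell)!}$ for $\max(p,q)\leq\ell\leq p+q$; substituting $k=p+q-\ell$ in $\log(1+G)=\sum_{\ell\geq 1}\frac{(-1)^{\ell-1}}{\ell}G^\ell$ and multiplying by $p!\,q!$ shows that the right-hand side above equals $p!\,q!\,[x^py^q]\log(1+x+y+xy)$. But $1+x+y+xy=(1+x)(1+y)$, so
\[
\log(1+x+y+xy)\=\log(1+x)\+\log(1+y)
\]
is a sum of two univariate series and therefore has vanishing $x^py^q$-coefficient as soon as $p\geq 1$ and $q\geq 1$, which is exactly our situation.

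The main obstacle is the first step: I need to verify carefully that the assignment $(I,J,\sigma)\mapsto\alpha$ is a bijection onto $\{\alpha\sim\beta\}$, using disjointness of distinct blocks of $\alpha$ to rule out configurations in which a single block would contain two different $A_i$'s or two different $B_j$'s. Once this parametrisation is in place, the remainder of the proof is a routine coefficient computation combined with the elementary factorisation of $1+x+y+xy$.
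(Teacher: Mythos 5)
Your proof is correct, and it follows the paper's combinatorial setup while finishing with a genuinely different evaluation. The first half coincides with the paper's argument: you parametrise the equivalence class of $\beta$ by partial matchings between the $p$ blocks of $\beta|_Z$ and the $q$ blocks of $\beta|_{Z^c}$, arriving at the same sum $\sum_{k=0}^{\min(p,q)}(-1)^{p+q-k-1}(p+q-k-1)!\,\binom{p}{k}\binom{q}{k}\,k!$ that appears in the paper (there with $a=p$, $b=q$ and $a\leq b$ assumed without loss of generality). Where you diverge is in showing this sum vanishes: the paper rewrites it as $(-1)^{p+q-1}(p+q-1)!$ times the value ${}_2F_1(-p,-q;-p-q+1;1)$ and invokes Gauss's summation theorem, whereas you identify it as $p!\,q!$ times the coefficient of $x^py^q$ in $\log(1+x+y+xy)$ and conclude from the factorisation $1+x+y+xy=(1+x)(1+y)$ that every mixed coefficient with $p,q\geq 1$ vanishes. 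Your finish is more elementary and self-contained (only the multinomial expansion and the logarithm of a product are used, no hypergeometric identity), and it makes the vanishing conceptually transparent: the ``connected'' count dies because the structure splits over $Z$ and $Z^c$. The paper's route is shorter to state once one is willing to quote Gauss's theorem. The one step you flag as needing care, the bijectivity of $(I,J,\sigma)\mapsto\alpha$, is handled by exactly the observation you sketch: each block of $\alpha$ meets $Z$ in either $\emptyset$ or a single block of $\beta|_Z$ (since $\alpha|_Z=\beta|_Z$), and disjointness of the blocks of $\alpha$ forces each $A_i$, and each $B_j$, to occur in exactly one block; this is no less detailed than the counting step in the paper's own proof.
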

\begin{proof}
Observe that~$\alpha\sim\beta$ precisely if for all~$A\in \alpha$ we have ($A\cap Z=\emptyset$ or~$A\cap Z\in\beta|_Z$) and similarly we have ($A\cap Z^c=\emptyset$ or~$A\cap Z^c\in\beta|_{Z^c}$). Hence, every~$A\in \alpha$ is the union of some~$A_1\in \alpha|_Z\cup \{\emptyset\}$ and~$A_2\in \alpha|_{Z^c}\cup \{\emptyset\}$ with not both~$A_1=\emptyset$ and~$A_2=\emptyset$. Write~$a=\ell(\beta|_{Z})$,~$b=\ell(\beta|_{Z^c})$, and assume without loss of generality that~$a\leq b$. Write~$k$ for the number of~$A\in \alpha$ for which both~$A_1\neq \emptyset$ and~$A_2\neq \emptyset$. Now,~$\ell(\alpha)=a+b-k$. Moreover, given~$k$,~$Z$ and~$\beta$, there are 
\[\binom{a}{k}\binom{b}{k}k!\]
ways to choose~$\alpha \sim \beta$ with~$\ell(\alpha)=a+b-k$. Hence, we find 
\begin{align}
\sum_{\alpha\sim \beta}\mob &\= \sum_{k=0}^a (-1)^{a+b-k-1}(a+b-k-1)!\, \binom{a}{k}\binom{b}{k}k!\\
&\=(-1)^{a+b-1}(a+b-1)!\sum_{k=0}^a \frac{(-a)_k(-b)_k}{(-a-b+1)_k(1)_k},
\end{align}
where~$(d)_k=\prod_{i=0}^{k-1} (d+i)$ is the rising Pochhammer symbol. This expression equals up to the constant~$(-1)^{a+b-1}(a+b-1)!$ the special value~${}_2 F_1(-a,-b,-a-b+1;1)$ of the hypergeometric function~${}_2 F_1(-a,-b,-a-b+1;z)$, which vanishes by Gauss's theorem subject to $a,b>0$. As $Z\neq \emptyset$, we have~$a>0$. Also, $b>0$ as $Z\neq[n]$.
\end{proof}

The following result not only computes the connected product of the moment functions~$S_k$, but also is one of the main technical results needed to prove \cref{thm:1}. 
\begin{thm} \label{prop:computephi} Let~$k_i,f_i$ for~$i=1,\ldots, n$ be such that {\upshape(\ref{def:Tkf})} defines~$S_{k_i,f_i}$. Then,
\begin{enumerate}[\upshape(i)]
\item\label{prop:computephi-1} There exists a function~$g:\n \to \q$ such that 
\[ S_{k_1,f_1}\cp \ldots\cp  S_{k_n,f_n}\=S_{|\vec{k}|,g}.\] 
In fact, 
\[g\=\sum_{\alpha\in\Pi(n)} \mob \ \partial^{\ell(\alpha)-1}\bigconv_{A\in \alpha} f_A,\]
 where~$f_A = \prod_{a\in A} f_{a}$ and~$\bigconv$ denotes the convolution product {\upshape(\ref{eq:conv})}.
\item\label{prop:computephi-i} If~$f_1(x)=x$, then~$\partial g = f_1\, \partial \tilde{g}$ with~$\tilde{g}$ given by~$S_{k_2,f_2}\cp \ldots\cp  S_{k_n,f_n}=S_{|\vec{k}|,\tilde{g}}.$
\end{enumerate}
\end{thm}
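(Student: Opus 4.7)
The plan is to expand the connected product via its definition~\eqref{def:connectedprod} and to index the resulting double sum by ``position assignments'' of each factor to an element of~$\n\cup\{0\}$. Write $S_{k_a,f_a}=c_a+\sum_{m\geq 1}F_m^{(a)}$ with $c_a=-\tfrac{B_{k_a+1}}{2(k_a+1)}\delta_{f_a,\mathrm{id}}$ and $F_m^{(a)}(\lambda)=m^{k_a}f_a(r_m(\lambda))$ supported on the singleton~$\{m\}$. Expanding each pointwise product $\prod_{a\in A}S_{k_a,f_a}$ as a sum over maps $m\colon A\to\n\cup\{0\}$, a term with $m(a)=0$ contributes the constant $c_a$, while factors at a common position $n\in \n$ multiply pointwise to give the single-support function $n^{k_{A_n}}f_{A_n}(r_n)$ with $A_n=m^{-1}(n)\cap A$. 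Then \cref{lem:disjointsupport}(\ref{case:i}) splits the induced product across $A\in\alpha$ into factors of disjoint positions, while \cref{lem:disjointsupport}(\ref{case:ii}), iterated, collapses the factors at each common position $n$ into $n^{k_{f_n}}\partial^{\ell(\alpha|_{f_n})-1}\bigconv_{B\in\alpha|_{f_n}}f_B\,\big|_{r_n}$, where $f_n=m^{-1}(n)$, $k_{f_n}=\sum_{a\in f_n}k_a$, $f_B=\prod_{b\in B}f_b$, and $\alpha|_{f_n}$ denotes the restriction of $\alpha$ to $f_n$.

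The key step, and also the main obstacle, is the M\"obius cancellation: after summing over $\alpha$ with weight $\mu(\alpha,\mathbf{1})$, only the \emph{constant} position assignments $m\equiv n_0\in\n\cup\{0\}$ should survive. For a non-constant $m$ one picks $n_0\in\mathrm{Im}(m)$ with $Z:=f_{n_0}$ satisfying $\emptyset\neq Z\subsetneq [n]$. The contribution for fixed $m$ and $\alpha$ depends on $\alpha$ only through the family of restrictions $(\alpha|_{f_n})_n$, so in particular it is insensitive to how blocks of $\alpha$ merge across distinct fibres of $m$ inside~$Z^c$. Because these fibre-restrictions do not determine $\alpha|_{Z^c}$ as a partition of $Z^c$, \cref{lem:hypergeometric} cannot be applied directly; one first sums $\mu(\alpha,\mathbf{1})$ over all $\alpha|_{Z^c}$ compatible with the prescribed fibre-restrictions, and only then invokes \cref{lem:hypergeometric} on the inner sum, which vanishes by the choice of $Z$. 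The surviving constant-$m$ contributions then reassemble: $m\equiv n_0\geq 1$ gives $\sum_{n_0\geq 1}n_0^{|\vec k|}g(r_{n_0})$ with $g$ as claimed, while $m\equiv 0$ contributes $\prod_a c_a\cdot\sum_\alpha\mu(\alpha,\mathbf{1})=\prod_a c_a\cdot\delta_{n,1}$ (by \cref{thm:mobius}(\ref{it:rdm})), which vanishes for $n\geq 2$ and reproduces the constant term of $S_{k_1,f_1}$ when $n=1$.

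For part~(ii), I would split the sum defining $g$ according to whether $\{1\}$ is a singleton block of $\alpha$ or whether $1$ lies in a block of size at least two, reindexing by the induced partition $\alpha'$ of $\{2,\ldots,n\}$. In the singleton case $f_{\{1\}}=f_1=\mathrm{id}$ and the summand equals $(-1)^{\ell(\alpha')}\ell(\alpha')!\,\partial^{\ell(\alpha')}(\mathrm{id}\conv F_{\alpha'})$ with $F_{\alpha'}=\bigconv_{A\in\alpha'}f_A$; in the merged case, summing over the block of $\alpha'$ which absorbs the element~$1$ and applying the iterated form of~\eqref{eq:id(f*g)} collapses the sum to $(-1)^{\ell(\alpha')-1}(\ell(\alpha')-1)!\,\partial^{\ell(\alpha')-1}(\mathrm{id}\cdot F_{\alpha'})$. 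Taking a further $\partial$ and using the Leibniz identity
\[ \mathrm{id}\cdot\partial^\ell F\meno\partial^\ell(\mathrm{id}\cdot F)\=-\ell\,\partial^{\ell+1}(\mathrm{id}\conv F),\]
which one proves by induction on $\ell$ from $\partial\mathrm{id}=\mathbf{1}$ together with~\eqref{convleibniz} and~\eqref{prodleibniz}, the two cases combine precisely into $\mathrm{id}\cdot \partial\tilde g$, yielding $\partial g=f_1\,\partial\tilde g$ as required.
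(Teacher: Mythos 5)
Your proposal is correct and follows essentially the same route as the paper: expansion over assignments of part sizes to the factors, factorization via \cref{lem:disjointsupport}, cancellation of the non-constant assignments via \cref{lem:hypergeometric}, collapse of the surviving diagonal terms into $\partial^{\ell(\alpha)-1}\bigconv_{A}f_A$, and for part (ii) the same singleton-versus-merged split, with your commutator identity $\mathrm{id}\cdot\partial^{\ell}F-\partial^{\ell}(\mathrm{id}\cdot F)=-\ell\,\partial^{\ell+1}(\mathrm{id}\conv F)$ an equivalent repackaging of the paper's cancellation of the $(1-\partial)$-terms. One small remark: your concern that \cref{lem:hypergeometric} ``cannot be applied directly'' is unfounded --- since the contribution depends on $\alpha$ only through the fibre restrictions, it is a fortiori constant on the classes where both $\alpha|_Z$ and $\alpha|_{Z^c}$ are fixed, which is exactly where the paper applies the lemma, and your two-step summation amounts to the same grouping.
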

\begin{remark}
We extend~$g$ by~$g(0)=0$. Here and later in this work, we usually omit the dependence of~$g$ on~$f_1,\ldots, f_n$ in the notation.
\end{remark}
\begin{proof}
For the first part, we let~$\vec{m}^{\vec{k_A}} \, \vec{f_A} \circ r_{\vec{m}}$  denote~$\prod_{i}m_i^{k_{A_i}} \cdot f_{A_i}\circ r_{m_i},$ where~$r_{m_i}$ is considered as a function~$\partitions \to \q$. In case~$n=1$ the result~(\ref{prop:computephi-1}) is trivially true, so we assume $n\geq 2$. By definition of the connected product and~$S_{k,f}$ (see~(\ref{def:connectedprod}) and~(\ref{def:Tkf}) respectively) we have
\begin{align}\
S_{k_1,f_1}\cp \ldots\cp  S_{k_n,f_n} &\=\sum_{\alpha\in\Pi(n)} \mob \bigblok_{A\in\alpha} \,\Bigl(\, \sum_{\vec{m}\in \n^{\ell(A)}} \vec{m}^{\vec{k_A}} \, \vec{f_A} \circ r_{\vec{m}}\Bigr)\\ 
\label{eq:conprodcomp} &\=\sum_{\vec{m}\in\n^n}\sum_{\alpha\in\Pi(n)} \mob \bigblok_{A\in\alpha} \vec{m_A}^{\vec{k_A}}\, \vec{f_A} \circ r_{\vec{m}}\, .
\end{align}
For all $m\geq 0$, the function $r_{m}:\partitions\to\q$ is supported on $\{m\}$. Having \cref{lem:disjointsupport} in mind, we aim to factor the functions in~\eqref{eq:conprodcomp} as a product of functions supported on a singleton set. Given~$\vec{m}\in \n^n$, we start by all functions supported on $\{m_1\}$, that is, we let~$Z(\vec{m})=\{i \mid m_i=m_1\}\subset[n]$. Note that~$Z(\vec{m})$ determines all~$i$ for which the support of $r_{m_i}$ contains $m_1$. Denote by~$E(\vec{m})$ the set of equivalence classes of~$\Pi(n)$ for this choice of~$Z=Z(\vec{m})$. We split the sum over~$\alpha\in \Pi(n)$ in~(\ref{eq:conprodcomp}) as a sum over the elements of~$E(\vec{m})$, i.e.
\begin{align}\label{eq:disjointsupport}
S_{k_1,f_1}\cp \ldots\cp  S_{k_n,f_n} &\=\sum_{\vec{m}\in\n^n}\sum_{[\beta]\in E(\vec{m})}\sum_{\alpha\in [\beta]} \mob \bigblok_{A\in\alpha} \vec{m_A}^{\vec{k_A}} \, \vec{f_A}\circ r_{\vec{m_A}}.
\end{align}
Then, given~$\vec{m}\in\n^n$,~$Z=Z(\vec{m})$ and~$A\in \alpha|_Z$, the function~$\lambda \mapsto \vec{m_A}^{\vec{k_A}}\, \vec{f_A}(r_{\vec{m_A}}(\lambda))$ is supported on~$\{m_1\}$, whereas for~$A\in \alpha|_{Z^c}$ the function~$\lambda \mapsto \vec{m_A}^{\vec{k_A}} \vec{f_A}(r_{\vec{m_A}(\lambda)})$ is supported on~$\n\backslash\{m_1\}$. Hence, by \cref{lem:disjointsupport}(\ref{case:i}) we find that~(\ref{eq:disjointsupport}) equals
\begin{align}\label{eq:27}
\sum_{\vec{m}\in\n^n}\sum_{[\beta]\in E(\vec{m})}\sum_{\alpha\in [\beta]} \mob 
\biggl(\, \bigblok_{A\in\alpha|_Z} \vec{m_A}^{\vec{k_A}} \, \vec{f_A} \circ r_{\vec{m_A}}\biggr)
\biggl(\,\bigblok_{A\in\alpha|_{Z^c}} \vec{m_A}^{\vec{k_A}} \, \vec{f_A}\circ r_{\vec{m_A}}\biggr). 
\end{align}
Instead of writing the second factor as a product of functions which are all supported on a singleton set, we make the following observation. 

As~$\alpha|_Z=\beta|_Z$ and~$\alpha|_{Z^{c}}=\beta|_{Z^c}$, the only dependence on~$\alpha$ in the above equation is in~$\mob$.  
By construction~$Z(\vec{m})$ is non-empty. Hence, by \cref{lem:hypergeometric} we have that if~$Z\neq [n]$ then for all~$\beta\in E(\vec{m})$ we have~$\sum_{\alpha\in [\beta]}\mob=0.$ This implies that we can restrict the first sum in~(\ref{eq:27}) to~$\vec{m}\in \n^n$ for which~$m_i=m_j$ for all~$i,j$, that is
\begin{align}
S_{k_1,f_1}\cp \ldots\cp  S_{k_n,f_n} &\=
\sum_{m\in\n}\sum_{\alpha\in \Pi(n)} \mob \bigblok_{A\in\alpha} \prod_{a\in A}m^{k_a}\cdot{f_{a}}\circ r_{m}\, .
\end{align}
Applying  \cref{lem:disjointsupport}\ref{case:ii}~$\ell(\alpha)-1$ times and using~(\ref{convleibniz}), we obtain the desired result.

For the second part, let~$Z=\{1\}$ and consider an equivalence class~$[\beta]$ for the equivalence relation~\eqref{eq:equivalence} determined by~$Z$. We split the sum 
\[\partial g \= \sum_{\alpha\in\Pi(n)} \mob\, \partial^{\ell(\alpha)}\bigconv_{A\in \alpha} f_A\]
over all conjugacy classes. Write~$A_1$ for the element of~$\alpha$ for which~$1\in A_1$. Denote~$\hat A_1=A_1\backslash\{1\}$ and~$\gamma=\beta|_{\{2,\ldots,n\}}$. 
In case~$A_1=\{1\}$ one has by~(\ref{eq:f*id}) that
\begin{equation}\label{eq:dgi}\mob\ \partial^{\ell(\alpha)}\bigconv_{A\in \alpha} f_A \= -\ell(\gamma)\,\mu(\gamma,\mathbf{1})\ \partial^{\ell(\gamma)-1}(1-\partial)\bigconv_{A\in \gamma}f_A\, .\end{equation}
In case~$A_1\neq \{1\}$ (i.e.~$|A_1|\geq 2$), one finds by~(\ref{prodleibniz}) that
\begin{equation}\label{eq:dgii}\mob\ \partial^{\ell(\alpha)}\bigconv_{A\in \alpha} f_A \=\mu(\gamma,\mathbf{1})\ \partial^{\ell(\gamma)-1}(f_1\, \partial f_{\hat A_1}+(1-\partial)f_{\hat A_1})\conv\!\bigconv_{A\in \gamma\backslash\hat A_1}f_A\, .\end{equation}
As~$[\beta]$ contains one element for which~(\ref{eq:dgi}) holds and~$\ell(\gamma)$ elements for which~(\ref{eq:dgii}) holds, one finds		
\[\sum_{\alpha\in [\beta]}\mob \partial^{\ell(\alpha)}\bigconv_{A\in \alpha} f_A \= 
\mu(\gamma,\mathbf{1})\ \partial^{\ell(\gamma)-1}\sum_{C\in \gamma}\Bigl(f_1\, \partial f_{C}\conv\!\bigconv_{A\in \gamma\backslash C}f_A\Bigr). \]
By~(\ref{convleibniz}) and~(\ref{eq:id(f*g)}) this equals 
\[ \mu(\gamma,\mathbf{1})\sum_{C\in \gamma}\Bigl(f_1 \, \partial f_{C}\conv\!\bigconv_{A\in \gamma\backslash C}\partial f_A\Bigr)\= \mu(\gamma,\mathbf{1})\,f_1\ \partial^{\ell(\gamma)}\bigconv_{A\in \gamma}f_A\, . \]
Hence, summing over all conjugacy classes, we obtain
\begin{align}\partial g &\= f_1\sum_{\gamma \in \Pi(n-1)}  \mu(\gamma,\mathbf{1})\ \partial^{\ell(\gamma)}\bigconv_{A\in \gamma}f_A \=f_1\ \partial \tilde g. \qedhere\end{align}
\end{proof}

The case when~$f_1(x)=\ldots=f_n(x)=x$ is the easiest example (for arbitrary $n\in \n$) of the above result. In this case one generalises \cref{Sisquasimodular} by a result which, in accordance with our main principle of establishing identities before the~$q$-bracket, yields this theorem after taking the~$q$-bracket.
\begin{cor}\label{lem:computephi} For all positive even~$k_1,\ldots, k_n$ one has
\[S_{k_1}\cp \ldots\cp  S_{k_n}\=S_{|\vec{k}|-n,\Faulhaber_n}\, .\]
\end{cor}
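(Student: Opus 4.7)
The plan is to apply \cref{prop:computephi} with every $f_i$ equal to the identity function $\mathrm{id}:\n\to\n$. Rewriting $S_k(\lambda) = -\tfrac{B_k}{2k} + \sum_{m\ge 1} m^{k-1} r_m(\lambda)$ shows that $S_k = S_{k-1,\mathrm{id}} = S_{k-1,\Faulhaber_1}$, so part (i) of \cref{prop:computephi} gives
$$S_{k_1}\cp\cdots\cp S_{k_n}\;=\;S_{|\vec k|-n,\,g}$$
for a function $g:\n\to\q$ (extended by $g(0)=0$). The statement therefore reduces to the identity $g=\Faulhaber_n$ of functions on $\n$.

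I would establish this by induction on $n$. The base case $n=1$ is immediate, since the only element of $\Pi(1)$ is trivial and contributes $g=\partial^0 f_1=\mathrm{id}=\Faulhaber_1$. For the inductive step with $n\ge 2$, let $\tilde g$ be the function associated via part (i) to $S_{k_2}\cp\cdots\cp S_{k_n}$, so by the inductive hypothesis $\tilde g=\Faulhaber_{n-1}$. Since $f_1(x)=x$, part (ii) applies and yields
$$\partial g(m)\;=\;f_1(m)\,\partial\tilde g(m)\;=\;m\cdot m^{n-2}\;=\;m^{n-1}\qquad (m\in\n),$$
where I used the defining property $\partial\Faulhaber_{n-1}(m)=m^{n-2}$. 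Combined with $g(0)=0$, telescoping gives $g(m)=\sum_{i=1}^m i^{n-1}=\Faulhaber_n(m)$, which is the uniqueness characterisation of the Faulhaber polynomial. This closes the induction.

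I do not foresee a serious obstacle, since all the heavy combinatorics—the M\"obius-theoretic rewriting of the connected product and, crucially, the recursion $\partial g=f_1\,\partial\tilde g$—has already been packaged into \cref{prop:computephi}. The only point worth checking carefully is that after removing $f_1$ the remaining $n-1$ factors are still all equal to $\mathrm{id}$, so both the inductive hypothesis and the hypothesis ``$f_1(x)=x$'' of part (ii) remain available at each step of the recursion.
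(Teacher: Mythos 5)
Your proof is correct and follows essentially the same route as the paper, which simply notes $S_k=S_{k-1,\mathrm{id}}$ and applies \cref{prop:computephi}(\ref{prop:computephi-i}) $n-1$ times; your induction on $n$ merely makes that iteration and the telescoping recovery of $\Faulhaber_n$ from $\partial g$ explicit.
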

\begin{proof}
Recall~$S_k=S_{k-1,\mathrm{id}}$ and apply \cref{prop:computephi}(\ref{prop:computephi-i})~${n-1}$ times. 
\end{proof}

Later we will use \cref{Sisquasimodular} when the~$f_i$ are Faulhaber polynomials. This is the situation in which we prove the main result of this paper, in which case the following lemma is useful. 

\begin{lem}\label{lem:convispol}
If~$f_1,\ldots,f_n$ are Faulhaber polynomials of degrees~$d_1,\ldots, d_n$, respectively and ${g:\n\to\q}$ is as in \cref{prop:computephi}, then there exists a polynomial~$p$ such that~$\partial g(m)=p(m)$ for all~$m\in \n$. Moreover,~$p$ is strictly of degree~$|\vec{d}|-1$, is even or odd and~$p(0)=0$. 
\end{lem}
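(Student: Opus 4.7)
My plan is to verify the four claims about $p:=\partial g$ (that it is a polynomial, of exact degree $|\vec{d}|-1$, of definite parity, and vanishing at $0$) in turn, starting from the explicit formula
\[g\=\sum_{\alpha\in\Pi(n)}\mob\,\partial^{\ell(\alpha)-1}\bigconv_{A\in\alpha}f_A\]
provided by \cref{prop:computephi}. Since each $f_a=\Faulhaber_{d_a}$ is a polynomial of degree $d_a$ vanishing at $0$, every pointwise product $f_A:=\prod_{a\in A}f_a$ is a polynomial of degree $d_A:=\sum_{a\in A}d_a$ vanishing at $0$. A short induction, using the identities relating $\partial$ and $*$ recorded in the preliminaries, shows that the convolution $\bigconv_{A\in\alpha}f_A$ is a polynomial of degree $|\vec{d}|+\ell(\alpha)-1$ vanishing at $0$, and that $\partial^{\ell(\alpha)}$ reduces this degree to $|\vec{d}|-1$. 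Summing over $\alpha$ yields that $p$ is a polynomial of degree at most $|\vec{d}|-1$.

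To promote this to an equality, I would pass to ordinary generating series $\widetilde{h}(x):=\sum_{m\geq 1}h(m)x^m$, under which $*$ corresponds to ordinary multiplication and $\partial$ to multiplication by $(1-x)$. Writing $\widetilde{f_A}(x)=h_A(x)/(1-x)^{d_A+1}$ with $h_A$ a polynomial satisfying $h_A(1)=d_A!/\prod_{a\in A}d_a$ (from the leading coefficient $1/d_a$ of $\Faulhaber_{d_a}$), the pole orders telescope to give
\[\widetilde{\partial g}(x)\=\frac{H(x)}{(1-x)^{|\vec{d}|}},\qquad H(x)\=\sum_{\alpha\in\Pi(n)}\mob\prod_{A\in\alpha}h_A(x),\]
so that $\deg p=|\vec{d}|-1$ if and only if
\[H(1)\=\sum_{\alpha\in\Pi(n)}\mob\prod_{A\in\alpha}\frac{d_A!}{\prod_{a\in A}d_a}\]
is nonzero. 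This quantity admits a natural probabilistic interpretation as the $n$-th joint cumulant $\kappa_n(X_1,\ldots,X_n)$ of the random variables $X_a:=Y^{d_a}/d_a$ for $Y\sim\mathrm{Exp}(1)$, since $E\bigl[\prod_{a\in A}X_a\bigr]=d_A!/\prod_{a\in A}d_a$. As the $X_a$ are nonconstant, comonotonic functions of the single variable $Y$, this cumulant is strictly positive. Establishing this positivity cleanly---either from the probabilistic picture or by an induction on $n$ using the moment--cumulant recursion---is the main obstacle.

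For the parity I would invoke the Faulhaber symmetry $\Faulhaber_l(-x-1)=(-1)^l\Faulhaber_l(x)$ (valid for $l\geq 2$; the case $l=1$ is handled separately using $\Faulhaber_1(x)=x$), which is most conveniently written via the Bernoulli-polynomial identity $l\Faulhaber_l(x)=B_l(x+1)-B_l$. Under $x\mapsto -x-1$ each $f_A$ picks up the sign $(-1)^{d_A}$, and a careful bookkeeping through the convolutions and discrete derivatives in the formula for $\partial g$ shows that every summand transforms with the uniform sign $(-1)^{|\vec{d}|-1}$, whence $p(-m)=(-1)^{|\vec{d}|-1}p(m)$. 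Finally, $p(0)=0$ is automatic when $|\vec{d}|-1$ is odd; in the even case (where $p$ is an even polynomial) it follows either from a direct evaluation of the defining formula at $m=0$ (exploiting $\Faulhaber_l(-1)=0$ for $l\geq 2$) or by induction using \cref{prop:computephi}(\ref{prop:computephi-i}) after reducing to the base case $f_1=\Faulhaber_1$.
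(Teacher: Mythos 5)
Your route parallels the paper's more closely than it may appear: the degree bound, the parity via $\Faulhaber_l(x)=(-1)^l\Faulhaber_l(-x-1)$, and $p(0)=0$ all match the paper's argument (which first uses \cref{prop:computephi}(\ref{prop:computephi-i}) to assume no $d_i=1$, so the symmetry applies to every factor), and your quantity $H(1)=\sum_{\alpha\in\Pi(n)}\mob\prod_{A\in\alpha}d_A!/\prod_{a\in A}d_a$ is, up to a positive normalization, exactly the leading coefficient $\tfrac{|\vec{d}|}{\prod_i d_i}\sum_{\alpha\in\Pi(n)}\mob\binom{|\vec{d}|}{d_{A_1},\ldots,d_{A_r}}^{-1}$ that the paper isolates. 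So everything hinges, in both arguments, on showing this number is nonzero.

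That is where your proposal has a genuine gap, and you acknowledge it yourself. The identification of $H(1)$ with the joint cumulant $\kappa_n(X_1,\ldots,X_n)$ for $X_a=Y^{d_a}/d_a$, $Y\sim\mathrm{Exp}(1)$, is correct, but the principle you invoke---that joint cumulants of nonconstant, comonotonic functions of a single random variable are strictly positive---is false for $n\geq 3$: taking $X_1=X_2=X_3=\mathbf{1}\{U>1/4\}$ with $U$ uniform on $[0,1]$ (nonconstant increasing functions of $U$) gives $\kappa_3=p(1-p)(1-2p)<0$ with $p=3/4$. Hence the positivity of this particular cumulant is precisely the content that must be proved, not a fact one can cite, and your sketch stops at "the main obstacle." The paper closes exactly this point by a combinatorial interpretation: $\binom{|\vec{d}|}{d_C}^{-1}$ is the probability that the balls carrying the colors of a subset $C$ are "well-colored" under a uniformly random re-marking of $|\vec{d}|$ colored balls by the same multiset of colors, and M\"obius inversion turns the alternating sum into the probability that \emph{no} proper nonempty subset of colors is well-colored; this is strictly positive because such markings exist (mark one ball of color $i$ with a dot of color $i+1$ modulo $n$). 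Without this step, or an actual proof of the cumulant positivity (say by the moment--cumulant recursion you mention), your argument establishes only $\deg p\leq|\vec{d}|-1$, the parity, and $p(0)=0$, but not the exact degree.
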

\begin{proof}
By \cref{prop:computephi}(\ref{prop:computephi-i}) we can assume w.l.o.g.\ that none of the degrees~$d_i$ equals~$1$. Now, consider a monomial~$\partial^{\ell(\alpha)}\bigconv_{A\in \alpha} f_A$ in~$\partial g$. Note that both~$\conv$ and~$\partial$ are operators on the space of polynomials, more precisely:
\[
\conv:\q[x]_{\leq k}\times \q[x]_{\leq l} \to \q[x]_{\leq k+l+1} \quad\quad \text{and}\quad\quad\partial:\q[x]_{\leq k} \to \q[x]_{\leq k-1}
\]
as
\[x^k*x^l\=\frac{k!l!}{(k+l+1)!}x^{k+l+1}+O(x^{k+l})\quad\quad \text{and}\quad\quad\partial(x^k)\=kx^{k-1}+O(x^{k-2}).\]
Hence, the degree of such a monomial is~$|\vec{d}|-1$. Now observe that by the symmetry~(\ref{eq:faulhabersym}) one has
\[ \partial f_A(x) \= f_A(x)-f_A(x-1) \= f_A(x)-(-1)^{|A|}f_A(-x).\]
Therefore, we see that~$\partial f_A$ is even or odd and as the convolution product preserves this property, every monomial is even or odd. By the same arguments~$\partial f_A(0)=0$ and hence the constant term of every monomial vanishes. Therefore, every monomial~$\partial^{\ell(\alpha)-1}\bigconv_{A\in \alpha} f_A$ in~$g$ satisfies the desired properties, so that it remains to show that the leading coefficient does not vanish. 

As~$\Faulhaber_l = \frac{1}{l}x^l+O(x^{l-1})$, the leading coefficient of a monomial as above equals
\begin{align}&\frac{|\vec{d}|}{\prod_{i}d_i}\frac{\prod_{i=1}^n d_{A_i}!}{|\vec{d}|!},\end{align}
where for a set~$B$ we have set~$d_{B}=\sum_{b\in B}d_b$.
Hence, the leading coefficient of~$\partial g$ equals
\begin{equation}\label{eq:col}\frac{|\vec{d}|}{\prod_{i}d_i}\cdot \sum_{\alpha\in\Pi(n)} \mob \binom{|\vec{d}|}{d_{A_1},\ldots,d_{A_r}}^{-1},\end{equation}
where~$\alpha=\{A_1,\ldots,A_r\}$. Note that this number has the following combinatorial interpretation. Let~$n$ balls be given which are colored such that~$d_1$ balls are colored in the first color,~$d_2$ in the second color, etc. Suppose we use the same multiset of colors to additionally mark each ball with a dot (possibly of the same color), that is,~$d_1$ balls are marked with a dot of the first color,~$d_2$ with a dot of the second color, etc.  Given a subset~$C$ of the set of all colors, it may happen that if we consider all balls colored by the colors of~$C$, all the dots on these balls are colored by the same set of colors~$C$. We then say that the balls are \emph{well-colored with respect to~$C$}. For example, both the empty set of colors and the set of all possible colors give rise to a well-coloring of balls.
If we independently at random color and mark the balls as above, the probability that the balls colored by a subset~$C$ are well-colored is~$\binom{|\vec{d}|}{d_C}^{-1}$. Hence, by applying M\"obius inversion the number \[\sum_{\alpha\in\Pi(n)} \mob \binom{|\vec{d}|}{d_{A_1},\ldots,d_{A_r}}^{-1}\]
 equals the probability that if we independently at random color and mark the balls as above, there does not exist a proper non-empty subset~$C$ of the colors such that the balls colored by~$C$ are well-colored. If we mark at least one ball of every color~$i$ with color~$i+1$ (modulo~$n$), such a set~$C$ cannot exist. Hence, the number~(\ref{eq:col}) is positive, so the polynomial~$p$ is strictly of degree~$|\vec{d}|-1$. 
\end{proof}

\section{Three quasimodular algebras}\label{sec:3}
\paragraph{Introduction}
Given integers~$k,l$ with~$k\geq 0$ and~$l\geq 1$ recall the definition of the \emph{double moment functions} in~(\ref{def:Tkl0}) 
by
\[T_{k,l}(\lambda) \= -\frac{B_{k+l}}{2(k+l)}(\delta_{l,1}+\delta_{k,0}) + \sum_{m=1}^\infty m^k \Faulhaber_l(r_m(\lambda)).~\]
Unless stated explicitly, we always assume that
\begin{equation}\label{eq:cond} k\in \z_{\geq 0}, l\in \z_{\geq 1}, k+l\in 2\z. \end{equation} 
Moreover, it turns out to be useful to define~$T_{0,0}\equiv T_{-1,1} \equiv -1$ and~$T_{k,l}\equiv 0$ for other pairs~$(k,l)$ with~$k<0$ or~$l<1$. 
 
\begin{remark} The double moment functions specialize to the moment functions studied in the previous section whenever~$l=1$, i.e.~$T_{k,1}=S_{k+1}$. Also, as~$\Faulhaber_l(1)=1$, for a strict partition~$\lambda$ 
one has~$T_{k,l}(\lambda)=S_k(\lambda)$. Hence, our functions~$T_{k,l}$ can be seen as an extension of the \emph{algebra of supersymmetric polynomials}, mentioned in the introduction, to functions on all partitions (and not only on strict partitions). 
\end{remark}
\begin{remark}
In case $k+l$ is odd, the~$q$-bracket of~$T_{k,l}$ does not vanish---in contrast to the shifted symmetric functions for which the~$q$-bracket vanishes for all odd weights. However, the~$q$-bracket of a polynomial involving the double moment functions in both even and odd weights also is a polynomial in the so-called combinatorial Eisenstein series, defined in \cref{defn:combeis}.
\end{remark}

These double moment functions give rise to three different graded algebras, which turn out to be quasimodular (see page~\pageref{defn:quasimodularalgebra}). 
\begin{defn}\label{defn:3alg}
Define the~$\q$-algebras~$\SA, \Sym^\blok(\SA)$ and~$\TA$ by the condition that
\begin{itemize}\itemsep0pt
\item $\SA$ is generated by the moment functions~$S_k$ under the pointwise product;
\item $\Sym^\blok(\SA)$ is generated by the elements of~$\SA$ under the induced product;
\item $\TA$ is generated by the double moment functions under the pointwise product.
\end{itemize}
\end{defn}
Our main result \cref{thm:1} is slightly refined by the following statement. 
\begin{thm}\label{thm:mainprop}
Let~$X$ be any of the algebras~$\SA,\Sym^\blok(\SA)$ and~$\TA$. Then,~$X$ is
\begin{itemize}\itemsep0pt
\item quasimodular;
\item closed under the pointwise product;
\item closed under the induced product if~$X\neq \SA$.
\end{itemize}
Moreover, the three algebras are related by~$\displaystyle \SA\subsetneq \Sym^\blok(\SA) \subsetneq \TA$.
\end{thm}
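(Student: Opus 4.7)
The plan is to reduce every assertion to one structural fact about connected products of the double moment functions, and then to exploit the two-term identity $fg = f\cp g + f\blok g$ (the $n=2$ case of Proposition~\ref{prop:connectedbracketrecursion}) together with multiplicativity of the $q$-bracket over the induced product (Proposition~\ref{prop:prod}). The structural fact is this: combining Proposition~\ref{prop:computephi}(i) with Lemma~\ref{lem:convispol}, any connected product $T_{k_1,l_1}\cp\cdots\cp T_{k_n,l_n}$ equals $S_{|\vec k|,g}$ with $\partial g$ a polynomial of fixed parity vanishing at $0$; expanding $g$ in the basis of Faulhaber polynomials of the matching parity recovers the connected product as a $\q$-linear combination of $T_{|\vec k|,l}$'s, hence lies in $\TA$ (the boundary cases $k_i=0$ or $l_i=1$, for which $T_{k,l}$ carries a nontrivial constant term not captured by $S_{k,\Faulhaber_l}$, are handled separately).

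For the inclusions, $\SA\subseteq \Sym^\blok(\SA)$ is tautological, and $\Sym^\blok(\SA)\subseteq \TA$ follows by expanding any $\blok$-product of elements of $\SA$ via iterated applications of Proposition~\ref{prop:connectedbracketrecursion} into a polynomial in connected products of $S_k$'s; by Corollary~\ref{lem:computephi} each such connected product is a $T$-function, so the whole expression lies in $\TA$. For strictness: in weight two, $\Sym^\blok(\SA)$ is spanned by $S_2=T_{1,1}$, while $\TA$ contains the linearly independent function $T_{0,2}$ (distinguish by values on the partition $(2)$). For the first strict inclusion, $S_2\blok S_2 = S_2^2 - S_2\cp S_2 = S_2^2 - T_{2,2}$ by Corollary~\ref{lem:computephi}, and a direct check on small partitions (e.g.\ comparing the pairs $(2),(1,1)$ and $(3),(1,1,1)$) shows $T_{2,2}$ is not in the weight-four span $\langle S_4, S_2^2, 1\rangle$ of $\SA$.

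For the closures, $\SA$ and $\TA$ are pointwise-closed by definition and $\Sym^\blok(\SA)$ is $\blok$-closed by definition. The remaining cases reduce via the two-term identity and iterated Proposition~\ref{prop:connectedbracketrecursion}: closure of $\TA$ under $\blok$ reduces to showing connected products of pointwise $T$-polynomials lie in $\TA$, which follows from expanding these into $\blok$-products of pure connected products of $T$'s and then invoking the structural fact; closure of $\Sym^\blok(\SA)$ under pointwise is analogous, using that the connected products of $S_k$'s appearing are themselves in $\Sym^\blok(\SA)$ (by applying $f\blok g = fg - f\cp g$ to pointwise polynomials in the $S_k$). Quasimodularity then follows from multiplicativity of $\langle\,\cdot\,\rangle_q$ over $\blok$: after the above reduction, the $q$-bracket of any element becomes a $\q$-polynomial in products of $q$-brackets of connected products of $T$'s, each quasimodular by the structural fact and Proposition~\ref{prop:ap2}, with weights adding correctly under $\blok$. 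The main obstacle will be the careful bookkeeping surrounding the constant corrections in the boundary cases of the definition of $T_{k,l}$ and the need to verify that the repeated applications of Proposition~\ref{prop:connectedbracketrecursion} preserve the induced-product weight grading throughout.
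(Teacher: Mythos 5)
Your architecture is the same as the paper's (the structural fact from \cref{prop:computephi} and \cref{lem:convispol}, the recursion \cref{prop:connectedbracketrecursion}, multiplicativity of the $q$-bracket over $\blok$, and \cref{prop:ap2} for the single brackets), but the pivotal closure step is circular as you have set it up. You correctly reduce $\blok$-closure of $\TA$ to showing $T_{\vec{k},\vec{l}}\cp T_{\vec{k}',\vec{l}'}\in\TA$, but you then propose to expand this into \emph{$\blok$-products} of pure connected products of single $T_{k,l}$'s and invoke the structural fact. The structural fact only tells you that each factor lies in $\TA$; whether a $\blok$-product of elements of $\TA$ lies in $\TA$ is exactly the statement being proved, so nothing has been gained. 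The paper's proof of \cref{thm:tclosed} avoids this by rewriting every induced product that \cref{prop:connectedbracketrecursion} produces as a pointwise product minus a connected product, so that after full recursion one is left with a \emph{pointwise} polynomial in pure connected products of single $T_{k,l}$'s: this lies in $\TA$ because $\TA$ is pointwise-closed by definition, and the nested connected products collapse by multilinearity once the structural fact replaces each pure connected product by a linear combination of single $T_{a,b}$'s. You have the two-term identity $fg=f\cp g+f\blok g$ in hand, but you deploy it in the wrong direction at precisely this point (the same vagueness affects your argument for $\Sym^\blok(\SA)\subseteq\TA$, where \cref{prop:connectedbracketrecursion} does not directly expand a $\blok$-product of pointwise $S$-monomials). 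Alternatively one can run a simultaneous induction on weight and total depth, as in \cref{thm:basis}, but that has to be set up explicitly.

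The second gap is that nothing in your proposal establishes linear independence of the monomials $T_{\vec{k},\vec{l}}$, nor that the induced products of the $T_{k,l}$ span $\TA$, so the weight grading with respect to $\blok$ is never shown to be well defined. This is not bookkeeping: the paper proves independence by a growth argument (evaluating on partitions $(N_1,N_2)$ and on $1^{N_1}2^{N_2}$) and spanning by the weight-and-depth induction of \cref{thm:basis}, and both are needed (i) to make sense of ``quasimodular of the same weight'' for $\TA$ and $\Sym^\blok(\SA)$ --- note that connected products such as $T_{0,2}\cp T_{0,2}=\tfrac{5}{6}T_{0,4}+\tfrac{1}{6}T_{0,2}+\tfrac{1}{288}$ are of \emph{mixed} weight, so ``weights add correctly'' only after an element is rewritten in the $\blok$-basis, not after your reduction, which still ends in mixed-weight connected products --- and (ii) to justify your strictness checks, which tacitly assume that membership of $T_{0,2}$ in $\Sym^\blok(\SA)$, resp.\ of $T_{2,2}$ in $\SA$, can be tested against the weight-$2$, resp.\ weight-$4$, graded piece alone. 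Your pointwise-closure claim for $\Sym^\blok(\SA)$ also rests on this missing structure: the paper obtains it from the explicit monomial basis (the $T_{\vec{k},\vec{l}}$ with $k_i\geq l_i$), which is visibly closed under pointwise multiplication, rather than from the chain of reductions you sketch.
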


\begin{remark}
Observe that being closed under the pointwise product is not implied by being quasimodular. For example, the algebra~$\mathcal{R}=\langle\, \cdot\,\rangle_q^{-1}(\widetilde{M})$ in \cref{rk:r} is quasimodular, closed under the induced product and~$\TA\subset \mathcal{R}$, but~$\mathcal{R}$ is not closed under the pointwise product \cite[Section 9]{Zag16}. 
\end{remark}

In the next section we provide different bases for these algebras: in this way we obtain many examples of functions with a quasimodular~$q$-bracket, and moreover, the study of these bases leads to a proof of \cref{thm:mainprop}.

\begin{remark}\label{rk:comparison}
The algebras~$\TA$ and~$\Lambda^*$ are different algebras, as follows from the observation that~$f(\lambda)=(-1)^k f(\lambda')$ for all~$f\in \Lambda_k^*$, which follows by writing a shifted symmetric polynomial as a symmetric polynomial in the Frobenius coordinates. This does not hold for all~$f\in \TA$, as can easily be checked numerically. On the other hand, it is not true that~$f(\lambda)\neq \pm f(\lambda')$ for all~$f\in \TA$, as~$Q_2=T_{1,1}$ with~$Q_k$ defined by Equation~(\ref{eq:qk}). 
More precisely, one has
\[ \TA\cap \Lambda^* = \q[Q_2].\]
Namely, if~$f\in\TA\cap \Lambda^*$, consider a \emph{strict} partition~$\lambda$ (i.e., a partition for which $r_m(\lambda)\leq 1$ for all~$m$). Then, we have that~$f(\lambda)$ is symmetric polynomial in the parts $\lambda_1,\lambda_2,\ldots$. On the other hand, as~$f\in \Lambda^*$, it follows that~$f(\lambda)$ is a shifted symmetric polynomial in the parts~$\lambda_1,\lambda_2,\ldots$. The only polynomials of degree~$d$ in the variables~$x_i$ that are both symmetric and shifted symmetric are up to a constant given by
$\left(\sum_{i} x_i\right)^d$, hence~$f\in \q[Q_2]$.
\end{remark}

\paragraph{The basis given by double moment functions}
 In this section we show that~$\TA$ is closed under the induced product. Moreover, we show that~$\SA$ and~$\Sym^\blok(\SA)$ are subalgebras of~$\TA$. In the next section, we use these results to define a weight grading on~$\TA$. Observe that as a vector space~$\mathcal{T}$ is spanned by the functions~$T_{\vec{k},\vec{l}}$, defined by~$T_{\vec{k},\vec{l}}=\prod_{i} T_{k_i,l_i}$, for all~$\vec{k}, \vec{l}\in \z^n$ satisfying the conditions~(\ref{eq:cond}) for all pairs~$(k,l)=(k_i,l_i)$. 

\begin{thm}\label{thm:tclosed}
The algebra~$\TA$ is closed under the induced product.
\end{thm}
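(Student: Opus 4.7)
Since $c \blok f = cf$ for constants $c \in \q$ (which follows immediately from Definition~\ref{def:inducedprod}) and $\blok$ is bilinear, it suffices to show that the induced product of two pointwise products of the non-constant parts $\hat T_{k,l}(\lambda) := T_{k,l}(\lambda) - C_{k,l} = \sum_{m \geq 1} m^k \Faulhaber_l(r_m(\lambda))$ again lies in $\TA$.

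The key step is a support decomposition. Setting $F_{k,l,m}(\lambda) := m^k \Faulhaber_l(r_m(\lambda))$, a function supported on the singleton $\{m\}$, one has $\hat T_{k,l} = \sum_m F_{k,l,m}$. Expanding two such pointwise products,
\[
\Bigl(\prod_{i=1}^n \hat T_{k_i,l_i}\Bigr) \blok \Bigl(\prod_{j=1}^{n'} \hat T_{k'_j,l'_j}\Bigr) \= \sum_{\vec m \in \n^n,\; \vec m' \in \n^{n'}} \Bigl(\prod_i F_{k_i, l_i, m_i}\Bigr) \blok \Bigl(\prod_j F_{k'_j, l'_j, m'_j}\Bigr).
\]
For each fixed pair $(\vec m, \vec m')$, I would group the factors on each side by the common value $s$ of $m_i$ (resp.\ $m'_j$), setting $A_s = \{i : m_i = s\}$, $A'_s = \{j : m'_j = s\}$, and $\Phi_s := \prod_{i \in A_s} F_{k_i, l_i, s}$, $\Phi'_s := \prod_{j \in A'_s} F_{k'_j, l'_j, s}$, both supported on $\{s\}$. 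For distinct values of $s$ these supports are disjoint, so iterated application of Lemma~\ref{lem:disjointsupport}(ii) reduces the induced product to the pointwise product $\prod_s (\Phi_s \blok \Phi'_s)$.

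Each factor $\Phi_s \blok \Phi'_s$ is then evaluated via Lemma~\ref{lem:disjointsupport}(iii)---together with $f \blok 1 = f$ in the degenerate cases when $A_s$ or $A'_s$ is empty---as $s^{K_s + K'_s}\, \partial(f_{A_s} \conv f'_{A'_s})(r_s(\lambda))$, where $K_s = \sum_{i \in A_s} k_i$, $f_{A_s}(x) = \prod_{i \in A_s} \Faulhaber_{l_i}(x)$, and analogously for the primed data. Arguing as in Lemma~\ref{lem:convispol}, $\partial(f_{A_s} \conv f'_{A'_s})$ is a polynomial vanishing at $0$, since both $\conv$ and $\partial$ preserve the space of polynomials and all Faulhaber polynomials vanish at $0$. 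Expanding this polynomial in the basis $\{\Faulhaber_L\}_{L \geq 1}$ of $x\q[x]$ expresses $\Phi_s \blok \Phi'_s$ as a finite $\q$-linear combination of the functions $F_{K_s + K'_s, L, s}$.

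Resumming over all $(\vec m, \vec m')$---equivalently, over all configurations of index sets $(A_s, A'_s)_{s \in \n}$---converts each remaining sum $\sum_s F_{N, L, s}$ back into $\hat T_{N, L}$, so that the induced product is displayed as a $\q$-linear combination of pointwise products of $\hat T_{N, L}$'s and therefore lies in $\TA$. The main obstacle is the combinatorial bookkeeping of the support decomposition and its reassembly; however, only finitely many distinct pairs $(A_s, A'_s)$ of subsets of $\{1,\ldots,n\}$ and $\{1,\ldots,n'\}$ arise across all configurations, and for each such pair the polynomial expansion contributes only finitely many terms, so the resulting sum is well-defined. The parity condition $N + L \in 2\z$ ensuring that $\hat T_{N, L}$ is a valid generator follows from the parity analysis behind Lemma~\ref{lem:convispol}, given the hypothesis that $k_i + l_i$ and $k'_j + l'_j$ are all even.
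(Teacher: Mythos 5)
Your support-decomposition strategy (splitting each $T_{k,l}$ into the functions $m^k\Faulhaber_l(r_m)$ supported on singletons, applying \cref{lem:disjointsupport}, and re-expanding in the Faulhaber basis) is a genuinely different route from the paper, which instead writes $T_{\vec{k},\vec{l}}\blok T_{\vec{k}',\vec{l}'}=T_{\vec{k},\vec{l}}T_{\vec{k}',\vec{l}'}-T_{\vec{k},\vec{l}}\cp T_{\vec{k}',\vec{l}'}$ and reduces, via the recursion of \cref{prop:connectedbracketrecursion}, to connected products of single generators, which are handled by \cref{prop:computephi} and \cref{lem:convispol}. However, as written your argument has a genuine gap at the parity step. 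The termwise polynomials you produce, $\partial(f_{A_s}\conv f'_{A'_s})$, are in general neither even nor odd: already for $T_{1,1}\blok T_{1,1}$ the diagonal contribution is $s^2\,\partial(\Faulhaber_1\conv\Faulhaber_1)(r_s)=s^2\bigl(\Faulhaber_2(r_s)-\Faulhaber_1(r_s)\bigr)$, whose expansion involves the odd-weight function $\sum_s s^2\Faulhaber_1(r_s)=T_{2,1}+\mathrm{const}$, which is \emph{not} a generator of $\TA$ (condition~(\ref{eq:cond}) requires $k+l$ even, and the monomials $T_{\vec{k},\vec{l}}$ without the parity restriction are still linearly independent, so odd-parity terms cannot be re-expressed through even ones). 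In this example the odd terms cancel only against the off-diagonal contribution after the full reassembly. Your justification---that the parity condition ``follows from the parity analysis behind \cref{lem:convispol}''---does not apply here: that lemma's even/odd argument uses that $\partial^{\ell(\alpha)}\bigconv_{A}f_A=\bigconv_A \partial f_A$ is a convolution of even-or-odd polynomials, i.e.\ it concerns precisely the M\"obius-alternating combinations $g=\sum_\alpha \mu(\alpha,\mathbf 1)\,\partial^{\ell(\alpha)-1}\bigconv_A f_A$ arising from the \emph{connected} product, not your individual blocks $\partial(f_{A_s}\conv f'_{A'_s})$. To close the gap you would have to show that, after summing over all configurations, the per-block coefficient functions are exactly such alternating combinations (at which point you are essentially re-proving \cref{prop:computephi}), or give an independent argument that all odd-parity monomials cancel. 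Without this, your argument only proves closure of the larger algebra generated by all $T_{k,l}$ with $k\geq0$, $l\geq1$, not of $\TA$.

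A secondary, fixable issue: in the reassembly you assert that the remaining sums over $s$ convert directly into $\sum_s m^N\Faulhaber_L(r_m)$, but distinct blocks of your configuration carry \emph{distinct} support values, so the conversion to unrestricted sums (hence to pointwise products of generators) requires an inclusion--exclusion over the partition lattice (or a recursion in the spirit of \cref{prop:connectedbracketrecursion}); this bookkeeping is precisely where the sign-alternating combinations mentioned above come from, so it is not cosmetic. By contrast, the claim that $\partial(f_{A_s}\conv f'_{A'_s})$ agrees on $\z_{\geq0}$ with a polynomial vanishing at $0$ is correct, though it needs the small observation that the polynomial extension $H$ of $f_{A_s}\conv f'_{A'_s}$ satisfies $H(-1)=H(0)=0$, not merely that Faulhaber polynomials vanish at $0$.
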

\begin{proof}
Observe that
\[T_{\vec{k},\vec{l}}\blok T_{\vec{k}',\vec{l}'} \= T_{\vec{k},\vec{l}} T_{\vec{k}',\vec{l}'} -T_{\vec{k},\vec{l}}\cp  T_{\vec{k}',\vec{l}'}\, .\]
Hence, it suffices to show that~$T_{\vec{k},\vec{l}}\cp  T_{\vec{k}',\vec{l}'}$ can be expressed in terms of elements of~$\TA$.

By \cref{prop:computephi} and \cref{lem:convispol}, we have that an expression of the form:
$$T_{k_1,l_1}\cp \cdots\cp T_{k_n,l_n}$$
is an element of~$\TA$. \Cref{prop:connectedbracketrecursion} implies that~$f_1f_2\cp f_3\cp f_4\cp \ldots\cp  f_n$ equals
\begin{align} (f_1\cp f_2\cp \ldots\cp  f_n) \+ \sum_{A\sqcup B=\{3,\ldots, n\}}&\bigl( (f_1\cp f_{A_1}\cp f_{A_2}\cp \ldots)\cdot(f_2\cp f_{B_1}\cp f_{B_2}\cp \ldots)\\&-(f_1\cp f_{A_1}\cp f_{A_2}\cp \ldots)\,\big|\, (f_2\cp f_{B_1}\cp f_{B_2}\cp \ldots)\bigr).\end{align}
Hence, by using this proposition recursively, we can replace the pointwise products in~$T_{\vec{k},\vec{l}}$ and~$T_{\vec{k}',\vec{l}'}$ by a linear combination of connected products of double moment functions~$T_{k,l}$, showing that~$T_{\vec{k},\vec{l}}\cp  T_{\vec{k}',\vec{l}'}$ is an element of~$\TA$. 
\end{proof}

Now, we determine a basis for the three algebras. Let~$\TA^\mathrm{mon}$ be the set of all monomials for the pointwise product in~$\TA$. Two elements of~$\TA^\mathrm{mon}$ are considered to be the same if one can reorder the products so that they agree, for example~$T_{1,1}T_{3,5}$ and~$T_{3,5}T_{1,1}$ are the same function. In other words, every elements of~$\TA^\mathrm{mon}$ can be written as~$T_{\vec{k},\vec{l}}$ in a unique way up to commutativity of the (pointwise) product.

\begin{thm}
We have 
\begin{align}\label{eq:subalg}\SA\subsetneq \Sym^\blok(\SA) \subsetneq \TA.\end{align}
Moreover, a basis for 
\begin{itemize}\itemsep0pt
\item $\TA$ is given by~$\TA^\mathrm{mon}$;
\item $\Sym^\blok(\SA)$ is given by all~$T_{\vec{k},\vec{l}}\in \TA^\mathrm{mon}$ satisfying~$k_i\geq l_i$ for all~$i$;
\item $\SA$ is given by all~$T_{\vec{k},\vec{l}}\in \TA^\mathrm{mon}$ satisfying~$l_i=1$ for all~$i$.
\end{itemize}
\end{thm}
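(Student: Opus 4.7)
The plan is to (i) identify the spanning sets from the connected-product calculus built up in the preceding sections, (ii) prove linear independence of $\TA^\mathrm{mon}$ by evaluating at partitions with many distinct part-sizes, and (iii) read the strict inclusions off the basis descriptions.

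For step (i), the span statement for $\TA$ is tautological, and for $\SA$ it is immediate since $S_k=T_{k-1,1}$. For $\Sym^\blok(\SA)$ I would argue both inclusions. For ``$\supseteq$'': by \cref{lem:computephi}, $S_{k_1}\cp\cdots\cp S_{k_n}=T_{|\vec{k}|-n,\,n}$, and since each $k_i\geq 2$ one has $|\vec{k}|-n\geq n$, so every single $T_{k,l}$ with $k\geq l$ arises as such a connected product and hence, by the defining relation $f_1\cp\cdots\cp f_n=\sum_\alpha\mob\bigblok_{A\in\alpha} f_A$, lies in $\Sym^\blok(\SA)$. To get arbitrary pointwise products $T_{\vec{k},\vec{l}}$ with $k_i\geq l_i$, I would iterate \cref{prop:connectedbracketrecursion} exactly as in the proof of \cref{thm:tclosed}, converting each pointwise product of $\cp$-chains of $S_k$'s into a $\blok$-combination of longer $\cp$-chains; \cref{prop:computephi} combined with \cref{lem:convispol} guarantees every block is again a $T_{K,L}$ with $K\geq L$. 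For the reverse inclusion ``$\subseteq$'' I would run the same machinery backwards: expand an element of $\Sym^\blok(\SA)$ via M\"obius inversion into $\blok$-products of $\cp$-chains of $S_k$'s, and then rewrite each chain using \cref{lem:computephi} to see that every resulting term is a $T_{\vec{K},\vec{L}}$ with $K_i\geq L_i$.

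Step (ii) is the main obstacle. I would evaluate at partitions $\lambda=(N_1^{r_1},\ldots,N_d^{r_d})$ with distinct integers $N_1>\cdots>N_d$ in $\n$ and arbitrary multiplicities $r_j\geq 1$, so that $T_{k,l}(\lambda)=C_{k,l}+\sum_{j=1}^d N_j^k\Faulhaber_l(r_j)$ becomes a polynomial in $2d$ formal variables $N_j, r_j$. Given a hypothetical finite linear relation $\sum a_{\vec{k},\vec{l}}T_{\vec{k},\vec{l}}=0$, let $n$ be the maximum number of factors appearing, take $d\geq n$, and isolate the multilinear component $\sum_{j_1,\ldots,j_n\text{ distinct}}\prod_i N_{j_i}^{k_i}\Faulhaber_{l_i}(r_{j_i})$ of each $T_{\vec{k},\vec{l}}(\lambda)$. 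Since $\Faulhaber_{l_i}$ has leading term $r^{l_i}/l_i$, the leading monomial of this multilinear piece records the unordered multiset $\{(k_i,l_i)\}_i$ uniquely; distinct multisets yield distinct monomials in $\q[N_1,\ldots,N_d,r_1,\ldots,r_d]$, forcing every $a_{\vec{k},\vec{l}}$ to vanish.

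Step (iii) then follows from explicit witnesses. One has $T_{2,2}=S_2\cp S_2=S_2^2-S_2\blok S_2\in\Sym^\blok(\SA)$, while $T_{2,2}\notin\SA$ because the $\SA$-basis contains only tuples with all $l_i=1$; and $T_{0,2}\in\TA$ has $k<l$, so by the $\Sym^\blok(\SA)$-basis it lies outside $\Sym^\blok(\SA)$. The delicate part is genuinely step (ii); steps (i) and (iii) are bookkeeping once the $\cp$/$\blok$ identities from earlier in the paper are in hand.
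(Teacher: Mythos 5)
Your proposal is correct in substance and follows the same overall architecture as the paper (spanning via the $\cp$/$\blok$ calculus of \cref{lem:computephi}, \cref{prop:computephi}, \cref{lem:convispol} and \cref{prop:connectedbracketrecursion}; linear independence by evaluation at suitable partitions; strictness read off the bases), but your independence step takes a genuinely different route. The paper only evaluates on two-parameter families --- partitions $(N_1,N_2)$ with $N_1\to\infty$, then partitions $1^{N_1}2^{N_2}$ --- and extracts first $\vec{k}$ and then $\vec{l}$ from growth rates; you instead evaluate on partitions with $d\geq n$ distinct part sizes and free multiplicities, turn the relation into a polynomial identity in the $2d$ variables $N_j,r_j$ (vanishing at all integer points with distinct $N_j$ forces the identity, e.g.\ after multiplying by $\prod_{i<j}(N_i-N_j)$), and isolate the component involving exactly $n$ of the pairs $(N_j,r_j)$. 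This is more uniform and avoids the paper's "continuing by considering the lower order terms" step, at the cost of one point you should make explicit: the conclusion is not quite "distinct multisets give distinct monomials", because a lower-order term of $\Faulhaber_{l'}$ can reproduce the leading monomial of a multiset with the same $\vec{k}$ and smaller $\vec{l}$ (for instance $N_j^2r_j^2$ occurs both as the leading monomial for $T_{2,2}$ and inside the multilinear piece of $T_{2,4}$, since $\Faulhaber_4(r)$ contains $r^2/4$). The transition is triangular (matching $k$'s, componentwise $l\leq l'$), so choosing a maximal multiset with nonzero coefficient and inducting finishes the argument; this is routine but should be said. One further bookkeeping remark: in the inclusion of $\Sym^\blok(\SA)$ into the span of the $T_{\vec{k},\vec{l}}$ with $k_i\geq l_i$, your expansion stops at $\blok$-products of single $T_{K,L}$'s with $K\geq L$; to land in the span of the \emph{pointwise} monomials you still need that this span is closed under $\blok$ (a rerun of the proof of \cref{thm:tclosed} keeping track of $k\geq l$, which is exactly how the paper argues) --- the same machinery you already invoke for the other inclusion, so this is an omission of wording rather than of ideas.
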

\begin{proof} It suffices to prove the second part, as from the stated bases statement~(\ref{eq:subalg})  follows immediately. 

By definition the elements of~$\TA^\mathrm{mon}$ generate~$\TA$ as a vector space. Hence, it suffices to show that they are linearly independent, i.e., that if 
\begin{align}\label{eq:indep} \sum_{\alpha\in I} c_\alpha T_\alpha(\lambda) \=0\end{align}
for all~$\lambda\in \partitions$, where~$I$ is the set of all pairs~$(\vec{k},\vec{l})$ up to simultaneous reordering and~$c_\alpha\in \q$, we have that~$c_\alpha=0$ for all~$\alpha$. 

First of all, let~$\lambda=(N_1,N_2)$ and consider~(\ref{eq:indep}) as~$N_1\to \infty$. Note that~$T_{\vec{k},\vec{l}}(\lambda)$ grows as~\[N_1^{|\vec{k}|}+N_2^{k_{\mathrm{min}}}N_1^{|\vec{k}\backslash k_{\mathrm{min}}|}\] plus lower order terms, where~$k_{\mathrm{min}}$ is the smallest of the~$k_i$ in~$\vec{k}$. Hence,~$|\vec{k}|$ should be constant among all~$T_\alpha$ in~(\ref{eq:indep}). Moreover, 
we conclude that~$k_{\mathrm{min}}$ should be constant among all~$T_\alpha$ in~(\ref{eq:indep}). Continuing by considering the lower order terms, we conclude that~$\vec{k}$ is constant among all~$T_\alpha$. Similarly, by instead considering partitions consisting of~$N_1$ times the part~$1$ and~$N_2$ times the part~$2$, we conclude that~$\vec{l}$ is constant among all~$T_{\alpha}$. Hence, there is at most one~$\alpha$ with nonzero coefficient~$c_\alpha$. We conclude that~$c_\alpha=0$ for all~$\alpha \in I$. 

For~$\Sym^\blok(\SA)$ we show, first of all, that indeed~$T_{\vec{k},\vec{l}}\in \Sym^\blok(\SA)$ if~$k_i\geq l_i$ for all~$i$. Let~$k\geq l$ of the same parity be given. By \cref{lem:computephi} we find that
\[\underbrace{T_{1,1}\cp T_{1,1}\cp \ldots\cp  T_{1,1}}_{l-1}\cp T_{k-l+1,1}\=\underbrace{S_2\cp S_2\cp \ldots\cp  S_2}_{l-1}\cp S_{k-l+2}\=T_{k,l}\, .\]
 Therefore,~$T_{k,l}\in \Sym^\blok(\SA)$ for all~$k\geq l$. Moreover, by applying M\"obius inversion on Equation~(\ref{def:connectedprod}), which defines the connected product, we find
\begin{equation}\label{eq:grading}T_{\vec{k},\vec{l}} \= \sum_{\alpha\in \Pi(n)}\bigblok_{A\in \alpha}(T_{k_{A_1},l_{A_1}}\cp T_{k_{A_2},l_{A_1}}\cp \ldots). \end{equation}
As we already showed that~$T_{k,l}\in \Sym^\blok(\SA)$ if~$k\geq l$, we find~$T_{\vec{k},\vec{l}}\in \Sym^\blok(\SA)$ if~$k_i\geq l_i$ for all~$i$. 

Next, we show that all elements in~$\Sym^\blok(\SA)$ are a linear combination of the~$T_{\vec{k},\vec{l}}$ satisfying~${k_i\geq l_i}$. As~$\SA$ clearly is contained in the space generated by the~$T_{\vec{k},\vec{l}}$ for which~$k_i\geq l_i$, it suffices to show that the latter space is closed under~$\blok$. For this we follow the proof of \cref{thm:tclosed} observing that in each step~$k_i\geq l_i$, so that indeed the~$T_{\vec{k},\vec{l}}$ for which~$k_i\geq l_i$ form a generating set for~$\Sym^\blok(\SA)$. 

As we already showed that the~$T_{\vec{k},\vec{l}}$ are linearly independent, we conclude that the~$T_{\vec{k},\vec{l}}\in \TA^\mathrm{mon}$ satisfying~$k_i\geq l_i$ for all~$i$ form a basis for~$\Sym^\blok(\SA)$. 

The last part of the statement follows directly, as by definition all~$T_{\vec{k},\vec{l}}\in \TA^\mathrm{mon}$ satisfying~$l_i=1$ for all~$i$ generate~$\SA$, and by the above they are linearly independent. 
\end{proof}

\paragraph{The basis defining the weight grading}
By definition, the double moment functions generate~$\TA$ under the pointwise product. In this section we show that we can replace the pointwise product in the latter statement by the induced product. Again we will consider every reordering of the factors in~$T_{k_1,l_1}\blok \cdots \blok T_{k_n,l_n}$ due to commutativity of the products to be the same element. Then, we have:

\begin{thm}\label{thm:basis}
The elements~$T_{k_1,l_1}\blok \cdots \blok T_{k_n,l_n}$ form a basis for~$\TA$. A basis for the subspace~$\Sym^\blok(\SA)$ is given by the subset of elements for which~$k_i\geq l_i$ for all~$i$. 
\end{thm}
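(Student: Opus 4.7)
The plan is to exploit the previous theorem---which gives the pointwise monomials $T_{\vec{k},\vec{l}}$ as a basis of $\TA$---together with the identity (\ref{eq:grading}) to show that the induced monomials arise from the pointwise basis via a triangular change of basis with respect to the filtration by length. Concretely, let $\mathcal{F}_n\subset\TA$ be the span of pointwise monomials of length at most~$n$; by the previous theorem the length-$n$ pointwise monomials project to a basis of $\mathcal{F}_n/\mathcal{F}_{n-1}$, and $\TA=\bigcup_n\mathcal{F}_n$. I would prove simultaneously by induction on $n$ that (a) every induced monomial of length $\leq n$ lies in $\mathcal{F}_n$, and (b) these induced monomials form a basis of $\mathcal{F}_n$; letting $n\to\infty$ then yields the statement for~$\TA$.

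The key input for the inductive step is the identity (\ref{eq:grading}), which expresses a length-$n$ pointwise monomial as
\begin{align*}
T_{\vec{k},\vec{l}}\=T_{k_1,l_1}\blok\cdots\blok T_{k_n,l_n}\+\sum_{\alpha\neq\{\{1\},\ldots,\{n\}\}}\bigblok_{A\in\alpha}\bigl(T_{k_{A_1},l_{A_1}}\cp T_{k_{A_2},l_{A_2}}\cp\cdots\bigr).
\end{align*}
By Theorem~\ref{prop:computephi} together with Lemma~\ref{lem:convispol}, each connected product over a block~$A$ is a linear combination of single double moment functions $T_{|\vec{k}_A|,L}$ with $L\leq|\vec{l}_A|$. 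Multilinearity of $\blok$ thus expresses every summand indexed by an $\alpha$ which is not the all-singletons partition as a linear combination of induced monomials of length $|\alpha|<n$, and by part~(a) of the inductive hypothesis these lie in~$\mathcal{F}_{n-1}$. Rearranging the identity then gives (a) for the length-$n$ induced monomial, and shows that modulo $\mathcal{F}_{n-1}$ this induced monomial coincides with~$T_{\vec{k},\vec{l}}$; since the latter projects to a basis element of $\mathcal{F}_n/\mathcal{F}_{n-1}$, so does the former, which together with the inductive part~(b) for $\mathcal{F}_{n-1}$ establishes~(b) at level~$n$.

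For $\Sym^\blok(\SA)$ I would run the same inductive scheme with $\mathcal{F}_n$ replaced by its intersection with $\Sym^\blok(\SA)$, which by the previous theorem is spanned by pointwise monomials with $k_i\geq l_i$ of length $\leq n$. The condition $k_i\geq l_i$ is preserved under the expansion above: for any block $A$, Lemma~\ref{lem:convispol} bounds the degree of the polynomial $g$ appearing in Theorem~\ref{prop:computephi} by $|\vec{l}_A|$, and using $l_a\leq k_a$ for each $a\in A$ this gives $L\leq|\vec{l}_A|\leq|\vec{k}_A|$, so every $T_{|\vec{k}_A|,L}$ arising in the decomposition of the connected product satisfies~$L\leq|\vec{k}_A|$. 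Hence the filtration argument restricts cleanly.

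The main subtlety will be the simultaneous nature of the induction: part~(a) is needed in order to interpret the lower-order terms of~(\ref{eq:grading}) as elements of $\mathcal{F}_{n-1}$, which in turn is what allows one to isolate the leading induced monomial modulo~$\mathcal{F}_{n-1}$ and prove part~(b). Without this simultaneous set-up one would recover only that induced monomials span (by inverting the relation between pointwise and induced monomials), but not that they are linearly independent.
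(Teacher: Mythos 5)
Your argument is correct, but it is organized differently from the paper's proof. The paper first reduces the basis claim to a pure spanning claim by a counting argument: in each weight-truncated piece the pointwise monomials and the induced monomials are equinumerous, and the pointwise monomials are already known to be a basis, so it suffices that the induced monomials generate. Spanning is then proved by a double induction on weight and total depth using the two-term identity $T_{\vec{k},\vec{l}}=T_{\vec{k}',\vec{l}'}\blok T_{k_n,l_n}-T_{\vec{k}',\vec{l}'}\cp T_{k_n,l_n}$ obtained by peeling off the last factor, where \cref{prop:connectedbracketrecursion} is needed to see that the connected term has total depth at most $n-1$. You instead work with the full M\"obius-inverted expansion~(\ref{eq:grading}) and the length (total-depth) filtration, showing that the change of basis between pointwise and induced monomials is unitriangular: by \cref{prop:computephi} and \cref{lem:convispol} each connected product over a block collapses to a linear combination of single functions $T_{|\vec{k}_A|,L}$ with $L\leq|\vec{l}_A|$ (plus a constant, since $T_{0,l}$ and $S_{0,\Faulhaber_l}$ differ by one; constants have length zero, so this is harmless), whence the length-$n$ induced monomial agrees with $T_{\vec{k},\vec{l}}$ modulo your $\mathcal{F}_{n-1}$. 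This buys you spanning and linear independence in one stroke, and dispenses with both the counting step (and its finite-dimensionality bookkeeping) and \cref{prop:connectedbracketrecursion}, at the price of invoking the full partition-indexed expansion rather than a two-term recursion; your treatment of $\Sym^\blok(\SA)$ via $L\leq|\vec{l}_A|\leq|\vec{k}_A|$ is the same observation the paper uses when it restricts to generators with $k\geq l$.
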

\begin{proof}
Assign to~$T_{k,l}$ weight~${k+l}$. This defines a weight filtering on~$\mathcal{T}$ with respect to the pointwise product. Consider the subspace of elements of weight at most~$w$ in~$\TA$. The number of basis elements in the basis given by the pointwise product in the previous section equals the number of induced products of the~$T_{k,l}$. Hence, it suffice that the induced products of the~$T_{k,l}$ generate~$\TA$. For this we proceed by induction first on the weight and then on the depth. Here, by depth we mean the unique filtering under the pointwise product for which every~$T_{k,l}$ has depth~$1$, usually called the \emph{total depth}. 

Trivially, every element of weight~$0$ or depth~$0$ is generated by (empty) induced products of the~$T_{k,l}$. Next, consider~${T_{\vec{k},\vec{l}}\in \TA}$ and assume all elements of lower weight and of the same weight and lower depth are generated by induced product of the~$T_{k,l}$. Let~$T_{\vec{k},\vec{l}}\in \TA$ of weight~$w$ be given and write~$\vec{k}',\vec{l}'$ for~$\vec{k},\vec{l}$ after omitting the last ($n$th) entry. Then,
\[T_{\vec{k},\vec{l}}\= T_{\vec{k}',\vec{l}'}\blok T_{k_n,l_n}   - T_{\vec{k}',\vec{l}'}\cp T_{k_n,l_n}\, .\]
Note that~$T_{\vec{k}',\vec{l}'}~$ is of weight strictly less than~$w$, hence is generated by induced products of the~$T_{k,l}$. Moreover, by \cref{prop:connectedbracketrecursion} and \cref{prop:computephi} it follows that the depth of ~$T_{\vec{k}',\vec{l}'}\cp T_{k_n,l_n}$ is at most~${n-1}$. Hence, by our induction hypothesis, it is generated by induced products of the~$T_{k,l}$. We conclude that~$T_{\vec{k},\vec{l}}$ is generated by induced products of the~$T_{k,l}$, which proves the first part of the theorem. 

The second part follows by the same proof, everywhere restricting to those~$T_{k,l}$ for which~${k\geq l}$. 
\end{proof}

By the above theorem, we can define a weight grading on~$\TA$.
\begin{defn} Define a \emph{weight grading} on~$\TA$ by assigning to~$T_{k,l}$ weight~$k+l$ and extending under the induced product. 
\end{defn}
Note that both the grading on~$\TA$ and the grading on~$\SA$ correspond to the grading on quasimodular forms after taking the~$q$-bracket. Hence, the grading on~$\SA$ is the restriction of the grading on~$\TA$. 

The weight grading defines a weight operator. In \cref{sec:sltwo} we extend this weight operator to an~$\sltwo$-triple acting on~$\TA$, so that~$\TA$ becomes an~$\sltwo$-algebra.

\paragraph{The~\texorpdfstring{$n$}{n}-point functions}
As induced products of the~$T_{k,l}$ form a basis for~$\TA$, knowing~$\langle f\rangle_q$ for all~$f\in \TA$ is equivalent to knowing the following generating function, called the~\emph{$n$-point function}
\[F_n(u_1,\ldots u_n, v_1,\ldots v_n) \= \sum_{\vec{k},\vec{l}} \langle T_{k_1,l_1}\blok \cdots \blok T_{k_n,l_n}\rangle_q \frac{u_1^{k_1}\cdots u_n^{k_n} v_1^{l_1-1} \cdots v_n^{l_n-1}}{k_1!\cdots k_n!(l_1-1)!\cdots (l_n-1)!}\]
for all~$n\geq 0$. Here the sum is over all~$k_i,l_i$ such that~$k_i+l_i$ is even and~$m!$ is consider to be~$1$ for~$m<0$. As the~$q$-bracket is a homomorphism with respect to the induced product, we directly conclude that
\begin{align}\label{eq:nmult}F_n(\vec{u},\vec{v})\=\prod_{i=1}^n F_1(u_i,v_i).\end{align}
We also define the \emph{partition function} by
\[\Phi(\vec{t}) \= \sum_{n=0}^\infty \frac{1}{n!} \sum_{\vec{k},\vec{l}}\langle T_{k_1,l_1}\blok \cdots \blok T_{k_n,l_n}\rangle_q t_{k_1,l_1}  \cdots  t_{k_n,l_n}\, .\]

The following result (together with~(\ref{eq:nmult})) expresses these functions in terms of the Jacobi theta series (see~(\ref{eq:jacobitheta})).
\begin{thm} For all~$n\geq 0$ one has
\[F_1(u,v) \= -\frac{1}{2}\frac{\theta'(0)\theta(u+v)}{\theta(u)\theta(v)}, \quad \quad \Phi(\vec{t}) \= \exp\Bigl([x^0y^0]F_1\left(\pdv{}{x},\pdv{}{y}\right)\sum_{k,l}t_{k,l} x^ky^l\Bigr),\]
where~$[x^0y^0]$ denotes taking the constant coefficient.
\end{thm}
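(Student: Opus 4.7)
The proof naturally splits into two parts. The formula for $\Phi(\vec{t})$ is the simpler of the two and follows from the multiplicativity of the $q$-bracket under the induced product: by \cref{prop:prod}, $\langle T_{k_1,l_1}\blok\cdots\blok T_{k_n,l_n}\rangle_q = \prod_{i=1}^n \langle T_{k_i,l_i}\rangle_q$. Substituting this into the definition of $\Phi$ and collapsing the resulting exponential series yields $\Phi(\vec{t}) = \exp\bigl(\sum_{k,l}\langle T_{k,l}\rangle_q\, t_{k,l}\bigr)$. The argument of the exponential can then be rewritten as $[x^0y^0]F_1(\partial_x,\partial_y)$ applied to a polynomial in $x,y$ with coefficients $t_{k,l}$, via the elementary identity $\partial_x^k\partial_y^{l-1}(x^{k'}y^{l'-1})\big|_{x=y=0} = k!\,(l-1)!\,\delta_{k,k'}\delta_{l,l'}$ (up to the natural reindexing convention on the $v$-variable of $F_1$).

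For the formula for $F_1(u,v)$, my plan is to substitute the explicit evaluation $\langle T_{k,l}\rangle_q = -\frac{B_{k+l}}{2(k+l)}(\delta_{l,1}+\delta_{k,0}) + \sum_{a,b\geq 1}a^k b^{l-1}q^{ab}$ (extracted from the proof of \cref{prop:ap2}) and to interchange the orders of summation. The $q$-dependent part becomes $\sum_{a,b\geq 1}q^{ab}\sum_{k,l}\frac{(au)^k(bv)^{l-1}}{k!(l-1)!}$, where the inner sum runs over $k\geq 0,\,l\geq 1$ with $k+l$ even. Treating the parity constraint by averaging over $(u,v)\mapsto(-u,-v)$, the inner sum collapses to $\tfrac{1}{2}(e^{au+bv}-e^{-au-bv}) = \sinh(au+bv)$. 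The $q$-independent terms arising from the Bernoulli constants assemble, via the standard generating series $\sum_{n\geq 1}\frac{B_{2n}}{(2n)!}z^{2n-1} = \tfrac{1}{2}\coth(z/2)-\tfrac{1}{z}$, into a combination of $\coth(u/2)$ and $\coth(v/2)$; combined with the polar contributions from the conventional extensions $T_{-1,1},T_{0,0}\equiv -1$, these produce the expected Laurent principal part at $u=0$ and $v=0$.

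The concluding step is to identify the resulting expression with $-\tfrac{1}{2}\theta'(0)\theta(u+v)/(\theta(u)\theta(v))$, which is essentially a classical Kronecker--Eisenstein/Jacobi identity. I would derive it from the Jacobi triple product $\theta(z) = 2q^{1/8}\sinh(z/2)\prod_{n\geq 1}(1-q^n)(1-q^ne^z)(1-q^ne^{-z})$: the $q\to 0$ factor yields $\tfrac{1}{2}(\coth(u/2)+\coth(v/2))$, matching the $q$-independent part above, while the remaining factors—expanded via $-\log(1-q^ne^{\pm z}) = \sum_{k\geq 1}q^{nk}e^{\pm kz}/k$ for each of the six product terms and then resummed in $n$ for fixed $nk=ab$—reproduce precisely $\sum_{a,b\geq 1}\sinh(au+bv)q^{ab}$. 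Alternatively, one may derive the identity from the formal product expansion $\theta(z) = \theta'(0)\,z\,\exp\bigl(-2\sum_{k\geq 2}G_k z^k/k!\bigr)$, itself obtained by integrating $P(z) = -\partial_z(\theta'(z)/\theta(z))$ twice using~\eqref{eq:propagator}, and comparing Eisenstein coefficients. The main obstacle is not structural but bookkeeping: carefully matching the Bernoulli constants and the pole residues with the conventional values assigned to $T_{-1,1}$ and $T_{0,0}$, and reconciling all normalization factors so that both the regular and the principal parts agree.
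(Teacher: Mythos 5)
Your treatment of $\Phi$ and your computation of the $q$-expansion of $F_1$ are fine, and up to that point you are doing essentially what the paper does: the paper's proof consists of writing $F_1(u,v)$ as $-\tfrac{1}{2u}-\tfrac{1}{2v}+\sum_{k,l}\bigl(D^{l-1}G_{k-l+2}+D^kG_{l-k}\bigr)\tfrac{u^kv^{l-1}}{k!(l-1)!}$ and then \emph{citing} \cite[\S\S 3]{Zag91} for the identification with $-\tfrac{1}{2}\theta'(0)\theta(u+v)/(\theta(u)\theta(v))$, with the $\Phi$-formula noted to follow immediately (as in your first paragraph, modulo the harmless reindexing you flag). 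So the entire mathematical content you still owe is the Kronecker--Eisenstein identity itself, and this is where your sketch has a genuine gap.

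Your triple-product route does not work as described. The triple product writes the target \emph{multiplicatively}: $-\tfrac12\theta'(0)\theta(u+v)/(\theta(u)\theta(v))=-\tfrac14\bigl(\coth\tfrac u2+\coth\tfrac v2\bigr)\prod_{n\ge1}\tfrac{(1-q^n)^2(1-q^ne^{u+v})(1-q^ne^{-u-v})}{(1-q^ne^{u})(1-q^ne^{-u})(1-q^ne^{v})(1-q^ne^{-v})}$, and expanding the logarithms of the six factors and resumming over $nk=ab$ gives
\begin{equation}
-\sum_{a,b\ge1}\frac{q^{ab}}{b}\Bigl(2+2\cosh\bigl(b(u+v)\bigr)-2\cosh(bu)-2\cosh(bv)\Bigr)\=-\sum_{a,b\ge1}\frac{8\,q^{ab}}{b}\cosh\tfrac{b(u+v)}{2}\sinh\tfrac{bu}{2}\sinh\tfrac{bv}{2},
\end{equation}
which is the logarithm of the correction factor, not the additive difference $\sum_{a,b\ge1}\sinh(au+bv)\,q^{ab}$ that your $q$-expansion of $F_1$ requires; to compare you must exponentiate, and the cross terms of the exponential contribute at every order $q^N$ with $N\ge2$. (The order-$q$ term does match, since $-\tfrac14(\coth\tfrac u2+\coth\tfrac v2)\cdot 8\cosh\tfrac{u+v}{2}\sinh\tfrac u2\sinh\tfrac v2=-\sinh(u+v)$ up to sign bookkeeping, which makes the claim look more innocent than it is.) Your alternative route via $\theta(z)=\theta'(0)\,z\,\exp\bigl(-2\sum_k G_kz^k/k!\bigr)$ merely restates the same problem: one must show that $\tfrac{u+v}{uv}\exp\bigl(-2\sum_kG_k\tfrac{(u+v)^k-u^k-v^k}{k!}\bigr)$ has the additive expansion in terms of the $D^{l-1}G_{k-l+2}$, and ``comparing Eisenstein coefficients'' is not an argument --- it is exactly the theorem of \cite[\S\S 3]{Zag91}. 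To close the gap you should either cite that result, as the paper does, or give the standard elliptic-function proof: fix $v$, expand $u\mapsto\theta'(0)\theta(u+v)/(\theta(u)\theta(v))$ as a Fourier series in $e^{u}$ on the annulus $|q|<|e^u|<1$, and determine the coefficients from the quasi-periodicity under $u\mapsto u+2\pi i\tau$ together with the residue at $u=0$; rearranging the resulting bilateral geometric series then yields the $\coth$-plus-$\sum\sinh(au+bv)q^{ab}$ expansion you matched against $F_1$. The remaining normalization issues (the $-\tfrac1{2u}-\tfrac1{2v}$ polar part versus the conventions for $T_{0,0},T_{-1,1}$) are indeed only bookkeeping.
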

\begin{proof}
We have that 
\[F_1(u,v) \= \prod_{i=1}^n \biggl(-\frac{1}{2u}-\frac{1}{2v}+\biggl(\sum_{k,l} D^{l-1} G_{k-l+2}+\sum_{k,l} D^{k} G_{l-k}\biggr)\frac{u^{k}v^{l-1}}{k!(l-1)!}\biggr),\]
where in the sum it is understood that~$k+l$ is even,~$k\geq 0, l\geq 1$. The expression for~$F_1(u,v)$ in the statement now follows from \cite[$\S\S 3$]{Zag91}. The expression for~$\Phi$ follows immediately from this result. 
\end{proof}

\section{Differential operators}\label{sec:sltwo}
\paragraph{The derivative of a function on partitions}\label{par:der}
Note that for all~$f\in \QtoP$ one has
\begin{equation}\label{eq:Dq-brac} D\langle f \rangle_q \=\langle S_2 f\rangle_q - \langle S_2\rangle_q\langle f \rangle_q\, .\end{equation}
Hence, by letting~$Df:=S_2\cp f=S_2f-S_2\blok f$ for~$f\in \QtoP$, we have that~$D\langle f \rangle_q = \langle Df \rangle_q\, .$ Moreover,~$D$ acts as a derivation:

\begin{prop}\label{prop:der}
The map~$D:\q^\partitions\to\q^\partitions$ is an equivariant derivation, i.e.~$D$ is linear, satisfies the Leibniz rule and 
\[D\langle f \rangle_q \= \langle Df\rangle_q.\]
\end{prop}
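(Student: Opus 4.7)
The plan is to reduce the statement to a calculation in the power series ring $\q[[u_1,u_2,\ldots]]$ via the $\vec u$-bracket isomorphism $\langle\cdot\rangle_{\vec u}\colon\QtoP\xrightarrow{\sim}\q[[u_1,u_2,\ldots]]$, which by construction of the induced product is an algebra isomorphism with respect to $\blok$. Linearity of $D$ is immediate from the formula $Df=S_2f-S_2\blok f$, since both products are bilinear. Equivariance is a two-line check: $\langle Df\rangle_q=\langle S_2f\rangle_q-\langle S_2\blok f\rangle_q=\langle S_2f\rangle_q-\langle S_2\rangle_q\langle f\rangle_q$, which equals $D\langle f\rangle_q$ by \eqref{eq:Dq-brac}.

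For the Leibniz rule, I interpret it with respect to the induced product (this is the product that makes the $q$- and $\vec u$-brackets into homomorphisms and is the one used for the $\sltwo$-algebra structure required in \cref{thm:2}; a quick numerical check with $f=g=S_2$ shows that pointwise Leibniz fails in general). The key step is to identify $D$ with the Euler-type derivation $\mathfrak{D}:=\sum_{i\geq 1}iu_i\partial_{u_i}$ on $\q[[u_1,u_2,\ldots]]$ via the $\vec u$-bracket, i.e., to show that $\langle Df\rangle_{\vec u}=\mathfrak{D}\langle f\rangle_{\vec u}$. This follows by applying $\mathfrak{D}$ to the quotient $\langle f\rangle_{\vec u}=(\sum_\lambda f(\lambda)u_\lambda)/(\sum_\lambda u_\lambda)$ using the quotient rule together with $\mathfrak{D}u_\lambda=|\lambda|u_\lambda$; the result equals $\langle|\lambda|f\rangle_{\vec u}-\langle|\lambda|\rangle_{\vec u}\langle f\rangle_{\vec u}$. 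Finally, the identity $|\lambda|=S_2(\lambda)+\tfrac{1}{24}$ together with the fact that the connected product of any function with a constant vanishes rewrites this as $\langle S_2\cp f\rangle_{\vec u}=\langle Df\rangle_{\vec u}$.

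Once this identification is made, Leibniz is automatic: since $\mathfrak{D}$ is manifestly a derivation on $\q[[u_1,u_2,\ldots]]$, one has
\[\langle D(f\blok g)\rangle_{\vec u}=\mathfrak{D}\bigl(\langle f\rangle_{\vec u}\langle g\rangle_{\vec u}\bigr)=\langle Df\rangle_{\vec u}\langle g\rangle_{\vec u}+\langle f\rangle_{\vec u}\langle Dg\rangle_{\vec u}=\langle Df\blok g+f\blok Dg\rangle_{\vec u},\]
and injectivity of $\langle\cdot\rangle_{\vec u}$ gives the desired identity in $\QtoP$. The only mildly subtle point — and not a real obstacle — is the translation between $D=S_2\cp\,\cdot$ and the Euler operator $\mathfrak{D}$, which rests on the observation that $|\lambda|$ and $S_2$ differ only by an additive constant that is annihilated by the connected product.
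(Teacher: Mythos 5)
Your proof is correct, and for the key Leibniz step it takes a genuinely different route from the paper. Your reading of the statement is the intended one: the Leibniz rule is meant with respect to the induced product (the paper deduces \cref{prop:der} from the preceding lemma, which asserts that $f\mapsto S_m\cp f$ is a derivation of $(\q^\partitions,\blok)$ for every even $m\geq 2$), and your observation that the pointwise Leibniz rule fails is accurate. The paper's argument stays inside $\QtoP$: it first proves the M\"obius identity $S_m^0\blok\mu=-S_m^0\,\mu$ by a sign-cancellation argument, then expands $S_m\cdot(f\blok g)$ via the sum formula of \cref{prop:defblok2} together with the additivity $S_m(\alpha\cup\beta\cup\gamma)=S_m(\alpha)+S_m(\beta)+S_m(\gamma)+\tfrac{B_m}{m}$; equivariance is then read off from \eqref{eq:Dq-brac}, exactly as you do. You instead transport $D$ through the $\vec{u}$-bracket isomorphism to the Euler operator $\mathfrak{D}=\sum_i i\,u_i\partial_{u_i}$ on $\q[[u_1,u_2,\ldots]]$, where the derivation property is manifest; the only computations needed are $\mathfrak{D}u_\lambda=|\lambda|\,u_\lambda$, the quotient rule, and the vanishing of connected products with constants, all of which you handle correctly and none of which uses material appearing after \cref{prop:der}. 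Your route is shorter and more conceptual, exhibiting $D$ as the pullback of an explicit diagonal derivation, and it generalizes at no extra cost: since $S_m^0(\lambda)=\sum_i i^{m-1}r_i(\lambda)$, replacing $\mathfrak{D}$ by $\sum_i i^{m-1}u_i\partial_{u_i}$ yields the derivation property of $f\mapsto S_m\cp f$ for every even $m$, i.e.\ the second part of the paper's lemma. What the paper's computation buys in exchange is the intermediate identity $S_m^0\blok\mu=-S_m^0\,\mu$ (of independent interest, stated as the first part of that lemma) and a proof carried out entirely in terms of the combinatorial description of $\blok$ on partitions, in line with the paper's principle of manipulating functions on partitions directly.
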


In fact, for all~$k\geq 1$, the mapping~$f\mapsto S_k\cp f$ is a derivation. Recall the definition of the M\"obius function~$\mu$ defined in \cref{prop:mobius} and denote $S_k^0=S_k-S_k(\emptyset)$. 

\begin{lem}For all even~$m\geq 2$ one has \begin{enumerate}[\upshape(i)]
\item\label{it:mobius1}~$\displaystyle S_m^0\blok \mu = -S_m^0\,\mu$;
\item The mapping~$(\q^\partitions,\blok)\to(\q^\partitions,\blok), f\mapsto S_m\cp f$ is a derivation, uniquely determined on~$\mathcal{T}$ by~\[\displaystyle S_m\cp T_{k,l} \= T_{k+m-1,l+1}\,.\]
\end{enumerate}
\end{lem}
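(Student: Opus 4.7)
The plan is to pass to the $\vec{u}$-bracket isomorphism $\QtoP \xrightarrow{\sim} \q[[u_1, u_2, \ldots]]$, under which $\blok$ becomes ordinary multiplication. Write $Z = \sum_{\lambda \in \partitions} u_\lambda = \prod_{j \geq 1}(1-u_j)^{-1}$ and $A_m = \sum_{j \geq 1} j^{m-1} u_j/(1-u_j)$. Two elementary identities will be the foundation: by \cref{prop:mobius}(iii) one has $\langle \mu \rangle_{\vec u} = 1/Z^2$; and $\langle r_j \rangle_{\vec u} = u_j/(1-u_j)$, which follows from $\sum_\lambda r_j(\lambda) u_\lambda = u_j \pdv{}{u_j} Z = \bigl(u_j/(1-u_j)\bigr) Z$. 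Consequently $\langle S_m^0 \rangle_{\vec u} = A_m$ by linearity.

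For~(i), the identity $\langle S_m^0 \blok \mu \rangle_{\vec u} = \langle S_m^0 \rangle_{\vec u} \langle \mu \rangle_{\vec u} = A_m/Z^2$ reduces the task to showing $\langle S_m^0 \mu \rangle_{\vec u} = -A_m/Z^2$. Applying $u_j \pdv{}{u_j}$ to the identity $\sum_\lambda \mu(\lambda) u_\lambda = 1/Z$ yields $\sum_\lambda r_j(\lambda) \mu(\lambda) u_\lambda = -\bigl(u_j/(1-u_j)\bigr)\bigl(1/Z\bigr)$; multiplying by $j^{m-1}$, summing over $j$, and dividing by $Z$ then gives $\langle S_m^0 \mu \rangle_{\vec u} = -A_m/Z^2$, as required.

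For~(ii), the key observation is that on the $\vec u$-bracket side, $r_j \cp \bullet$ acts as a first-order differential operator. Indeed, from $\sum_\lambda r_j(\lambda) f(\lambda) u_\lambda = u_j \pdv{}{u_j}\bigl(Z \langle f \rangle_{\vec u}\bigr)$ and the product rule together with $u_j \pdv{}{u_j} Z = \langle r_j \rangle_{\vec u} Z$, one obtains
\[\langle r_j f \rangle_{\vec u} \= u_j \pdv{}{u_j}\langle f \rangle_{\vec u} \+ \langle r_j \rangle_{\vec u}\, \langle f \rangle_{\vec u},\]
whence $\langle r_j \cp f \rangle_{\vec u} = u_j \pdv{}{u_j} \langle f \rangle_{\vec u}$. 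Since $u_j \pdv{}{u_j}$ is a derivation of $\q[[u_1,u_2,\ldots]]$, the map $f \mapsto r_j \cp f$ is a derivation of $(\QtoP, \blok)$. Noting that $S_m \cp f = S_m^0 \cp f$ (because $c \cp f = cf - c\blok f = 0$ for any constant $c$) and that $S_m^0 = \sum_j j^{m-1} r_j$, linearity and the fact that a linear combination of derivations is a derivation imply that $f \mapsto S_m \cp f$ is a derivation of $(\QtoP, \blok)$ corresponding to $L_m = \sum_j j^{m-1} u_j \pdv{}{u_j}$.

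To verify the explicit formula $S_m \cp T_{k,l} = T_{k+m-1, l+1}$, apply \cref{prop:phibrac} to $f = \Faulhaber_l$ (using $\partial \Faulhaber_l(r) = r^{l-1}$) to obtain $\langle T_{k,l} \rangle_{\vec u} = C_{k,l} + \sum_{j, r \geq 1} j^k r^{l-1} u_j^r$; applying $L_m$ sends each $u_j^r$ to $j^{m-1} r u_j^r$, giving $\sum_{j,r \geq 1} j^{k+m-1} r^l u_j^r$, which equals $\langle T_{k+m-1, l+1} \rangle_{\vec u}$ because $C_{k+m-1, l+1} = 0$ (neither $k+m-1 = 0$ nor $l+1 = 1$ is possible when $m \geq 2$, $k \geq 0$, $l \geq 1$). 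Uniqueness on $\TA$ is then automatic from \cref{thm:basis}, which gives a basis of $\TA$ consisting of $\blok$-monomials in the $T_{k,l}$. I do not anticipate a serious obstacle; the only subtle bookkeeping point is how the constants $C_{k,l}$ and $-B_m/(2m)$ interact with $\cp$, and this is handled uniformly by $c \cp f = 0$.
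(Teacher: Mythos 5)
Your proof is correct, but it takes a genuinely different route from the paper's. You transport everything through the $\vec{u}$-bracket isomorphism and identify $r_j\cp(\cdot)$ with the vector field $u_j\pdv{}{u_j}$, so that $S_m\cp(\cdot)$ becomes $\sum_j j^{m-1}u_j\pdv{}{u_j}$ acting on $\q[[u_1,u_2,\ldots]]$; part (i) is then a one-line logarithmic-derivative computation applied to $\sum_\lambda \mu(\lambda)u_\lambda = 1/Z$, the Leibniz rule is inherited from an evident derivation of the power series ring, and the formula on generators follows from \cref{prop:phibrac} plus the observation that the constants $C_{k+m-1,l+1}$ vanish. The paper instead stays on the partition side: part (i) is proved by a sign-reversing pairing ($\mu(\lambda\cup(m))=-\mu(\lambda)$ for strict $\lambda$ avoiding $m$), and part (ii) uses the M\"obius-sum formula $(f\blok g)(\lambda)=\sum_{\alpha\cup\beta\cup\gamma=\lambda}f(\alpha)g(\beta)\mu(\gamma)$ of \cref{prop:defblok2} together with the additivity $S_k(\lambda)=S_k(\alpha)+S_k(\beta)+S_k(\gamma)+\tfrac{B_k}{k}$ and part (i) to derive $S_k\cp(f\blok g)=(S_k\cp f)\blok g+f\blok(S_k\cp g)$ directly as an identity of functions, with the generator formula cited from \cref{prop:computephi}. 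Your version is shorter and more conceptual, gives an explicit differential-operator description of $S_m\cp(\cdot)$ on all of $\q^\partitions$, and makes (i) and (ii) independent of one another; the paper's version keeps the argument intrinsic to functions on partitions (in line with its stated principle) and reuses machinery it has already established, with (i) serving as an ingredient of (ii). The only point worth a half-sentence in your write-up is that the sums over $j$ (both $\sum_j j^{m-1}u_j\pdv{}{u_j}$ and $\sum_j j^{m-1}(r_j\cp f)$) are locally finite, so the interchange of summation with $\cp$ and the derivation property of the infinite sum are legitimate; with that noted, your argument is complete.
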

\begin{remark}
In case~$m\geq 4$, the derivation~$f\mapsto S_m\cp f$ does not correspond to a derivation on~$\widetilde{M}$, i.e., a derivation~$\mathfrak{d}_m$ such that~$\mathfrak{d}_m\langle f\rangle_q = \langle S_m\cp f\rangle_q$ for all~$f\in \TA$. For instance, although the~$q$-brackets of~$T_{m,m}$ and~$T_{m-1,m+1}$ are the same, the~$q$-brackets of~$S_{m}\cp T_{m,m} = T_{2m-1,m+1}$ and~$S_{m}\cp T_{m-1,m+1} = T_{2m-2,m+2}$ are different. 
\end{remark}
\begin{proof}
First of all, by \cref{prop:phibrac} one has
\begin{align}\label{eq:der1}\Bigl(\,\sum_{\lambda \in \partitions}u_\lambda\Bigr)\langle S_k^0\blok\mu\rangle_{\vec{u}} \= \Bigl(\,\sum_{m,r\geq 1} m^{k-1}u_m^r\Bigr)\Bigl(\,\sum_{\lambda \in \partitions}\mu(\lambda)\,u_{\lambda}\Bigr).\end{align}
Let~$\mathscr{S}_m$ be the set of strict partitions not containing~$m$ as a part. Then, we can rewrite~(\ref{eq:der1}) as
\[ \sum_{m}\sum_{\lambda \in \mathscr{S}_m} m^{k-1}\mu(\lambda)\,u_mu_{\lambda}\= -\sum_{\lambda \in \partitions} S_k^0(\lambda)\,\mu(\lambda)\,u_{\lambda}\, ,\]
since~$\mu(\lambda\cup(m))=-\mu(\lambda)$ for~$\lambda \in \mathscr{S}_m$, so that for~$r\geq 2$ the coefficient of~$u_m^ru_\lambda$  cancels in pairs. We conclude that~$S_k^0\blok \mu = -S_k^0\,\mu$. 

For the second part, note that~(\ref{it:mobius1}) implies that 
\[S_k\blok \mu \= -\Bigl(S_k+\frac{B_k}{k}\Bigr)\,\mu.\] 
Let~$f,g\in \q^\partitions$ be given. Then 
\begin{align}
S_k\cp (f\blok g) &\= S_k(f\blok g) - S_k\blok f\blok g.
\end{align}
If~$\alpha\cup\beta\cup\gamma=\lambda$ then~$S_k(\lambda) = S_k(\alpha)+S_k(\beta)+S_k(\gamma)+\frac{B_k}{k}$, hence
\begin{align}
S_k(\lambda)\,(f\blok g)(\lambda) &\=\sum_{\alpha\cup\beta\cup\gamma=\lambda} \Bigl(S_k(\alpha)+S_k(\beta)+S_k(\gamma)+\frac{B_k}{k}\Bigr)f(\alpha)\,g(\beta)\,\mu(\gamma) \\
&\= (S_kf)\blok g \+ f\blok(S_kg) \+\sum_{\alpha\cup\beta\cup\gamma=\lambda} \Bigl(S_k(\gamma)+\frac{B_k}{k}\Bigr)f(\alpha)\,g(\beta)\,\mu(\gamma) \\
&\= (S_kf)\blok g \+ f\blok(S_kg) \meno \sum_{\alpha\cup\beta\cup\gamma=\lambda}(S_k\blok\mu)(\gamma)\,f(\alpha)\,g(\beta) \\
&\= (S_kf)\blok g+ f\blok(S_kg) \meno S_k\blok f\blok g. 
\end{align}
Therefore, 
\[S_k\cp (f\blok g) \= (S_kf)\blok g+ f\blok(S_kg)-2\,S_k\blok f\blok g \= (S_k\cp f)\blok g + f\blok(S_k\cp g),\]
i.e., the mapping~$f\mapsto S_k\cp f$ is a derivation.  The formula~$S_m\cp T_{k,l} = T_{k+m-1,l+1}$ follows directly from \cref{prop:computephi}.
\end{proof}

\begin{proof}[Proof of \cref{prop:der}]
As~$S_2\cp f = S_2f-S_2\blok f$ is derivation by the above lemma, the results follows directly from~(\ref{eq:Dq-brac}). 
\end{proof}

\paragraph{The equivariant~\texorpdfstring{$q$}{q}-bracket}\label{par:eq}
In this section we extend the action by the~$\sltwo$-triple~$(D, \mathfrak{d},W)$ on quasimodular forms to~$\TA$. As the derivation~$\mathfrak{d}$ does not act on all power series in~$q$, but only on quasimodular forms, we cannot hope to define~$\mathfrak{d}$ on all functions on partitions as we did with~$D$. On the algebra~$\TA$, however, this is possible. We define an~$\sltwo$-action on this space and we show that the~$q$-bracket restricted to~$\TA$ is an equivariant map of~$\sltwo$-algebras.

Note that the following definition agrees with the definition of~$D$ in the previous section:
\begin{defn}
Define the derivations~$D,W,\mathfrak{d}$ on~$\TA$ by
\begin{align}
D\, T_{k,l} &\= T_{k+1,l+1}\, , \\
W\, T_{k,l} &\=(k+l)T_{k,l}\, , \\
\mathfrak{d}\,T_{k,l} &\= k(l-1)T_{k-1,l-1}-\tfrac{1}{2}\delta_{k+l-2}.
\end{align}
\end{defn}
One immediately checks that~$D,W$ and~$\mathfrak{d}$ satisfy the commutation relation of an~$\sltwo$-triple on~$\TA$. The corresponding acting of~$\sltwo$ on~$\TA$ makes the~$q$-bracket equivariant, so that a refined version of \cref{thm:2} is:

\begin{thm}[The~$\sltwo$-equivariant symmetric Bloch--Okounkov theorem]\label{thm:sl2bo}
The algebra~$\TA$ is an~$\sltwo$-algebra with respect to the above action of~$\sltwo$ on~$\TA$. Moreover, the~$q$-bracket becomes an equivariant map of~$\sltwo$-algebras, i.e., for~$f \in \TA$ one has
\begin{equation}\label{eq:sl2q}
D\langle f \rangle_q = \langle Df\rangle_q, \quad W\langle f \rangle_q = \langle Wf\rangle_q, \quad \mathfrak{d}\langle f \rangle_q = \langle \mathfrak{d}f\rangle_q\,.
\end{equation}
\end{thm}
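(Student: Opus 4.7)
The plan has two clean halves: first confirm that $(\TA,\blok)$ is an $\sltwo$-algebra, then promote the already established equivariance for $D$ and $W$ to the full equivariance in \eqref{eq:sl2q} by checking $\mathfrak{d}$ on generators only.

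For the first half, note that Theorem~\ref{thm:basis} identifies $(\TA,\blok)$ with the free commutative polynomial algebra on the symbols $T_{k,l}$ (indexed by $k\ge 0$, $l\ge 1$, $k+l$ even). Consequently any map on generators extends uniquely to a derivation of $(\TA,\blok)$, and this is the intended meaning of the displayed formulas for $D,W,\mathfrak{d}$. Since commutators of derivations are derivations, the three $\sltwo$ relations $[W,D]=2D$, $[W,\mathfrak{d}]=-2\mathfrak{d}$, and $[\mathfrak{d},D]=W$ only need to be verified on a single generator $T_{k,l}$. Each is a direct one-line calculation: for instance $[\mathfrak{d},D]T_{k,l}=(k+1)l\,T_{k,l}-k(l-1)\,T_{k,l}=(k+l)T_{k,l}$, where the boundary constant $-\tfrac12 \delta_{k+l,2}$ is killed on one side by $D$ on a constant and on the other produces no contribution because $k+l\ge 2$ for every generator.

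For the second half, the cases of $D$ and $W$ require little work. Equivariance for $D$ is Proposition~\ref{prop:der}, which already holds on all of $\QtoP$ and hence on $\TA$. Equivariance for $W$ follows because the $q$-bracket is $\blok$-multiplicative, so on a basis element $T_{k_1,l_1}\blok\cdots\blok T_{k_n,l_n}$ both sides of $W\langle f\rangle_q=\langle Wf\rangle_q$ equal $\bigl(\sum_i(k_i+l_i)\bigr)\prod_i\langle T_{k_i,l_i}\rangle_q$ by Proposition~\ref{prop:ap2}, which asserts that $\langle T_{k,l}\rangle_q$ is quasimodular of weight $k+l$. The serious point is $\mathfrak{d}$. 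Here the key observation is that $\mathfrak{d}\circ\langle\cdot\rangle_q$ and $\langle\cdot\rangle_q\circ\mathfrak{d}$ are both derivations from $(\TA,\blok)$ into $\widetilde{M}$, regarded as a $\TA$-bimodule through $\langle\cdot\rangle_q$: on the one hand $\mathfrak{d}$ is a derivation of $\widetilde{M}$ and $\langle\cdot\rangle_q$ is $\blok$-multiplicative; on the other hand $\mathfrak{d}$ is a $\blok$-derivation of $\TA$ by construction and $\langle\cdot\rangle_q$ is $\blok$-multiplicative. Their difference is a derivation of $(\TA,\blok)$, so it is zero as soon as it vanishes on every generator $T_{k,l}$.

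It therefore remains to check $\mathfrak{d}\langle T_{k,l}\rangle_q = \langle \mathfrak{d}T_{k,l}\rangle_q$ generator by generator, which is a bookkeeping exercise. Writing $\langle T_{k,l}\rangle_q$ via Proposition~\ref{prop:ap2} as $D^{l-1}G_{k-l+2}$ or $D^{k}G_{l-k}$, and using the operator identity $[\mathfrak{d},D^n]=n(W-n+1)D^{n-1}$ from \eqref{eq:D^n} together with the two facts $\mathfrak{d}G_2=-\tfrac12$ and $\mathfrak{d}G_{2j}=0$ for $j\ge 2$, one reduces the left-hand side to $k(l-1)D^{\min(k,l)-1}G_{|k-l|+2}$ plus a constant correction $-\tfrac12\delta_{k+l,2}$ coming from $\mathfrak{d}G_2$. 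Comparing with $\langle \mathfrak{d}T_{k,l}\rangle_q = k(l-1)\langle T_{k-1,l-1}\rangle_q - \tfrac12\delta_{k+l,2}$ and again invoking Proposition~\ref{prop:ap2} shows that the coefficients match in every subcase. The only step that is not perfectly formal is the case distinction $k>l$, $k=l$, $l-k\ge 2$, together with the boundary values $k=0$, $l=1$, which is where the choice of constant $C_{k,l}$ in \eqref{def:Tkl0} and the degeneration $T_{k-1,l-1}$ have to be tracked carefully; this is the main obstacle, but it is routine rather than deep.
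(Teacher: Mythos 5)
Your proposal is correct and follows essentially the same route as the paper's proof: $D$-equivariance via \cref{prop:der}, $W$-equivariance as the homogeneity statement, and $\mathfrak{d}$-equivariance checked on the generators $T_{k,l}$ using \cref{prop:ap2} together with \eqref{eq:D^n}, then extended to all of~$\TA$ by the Leibniz rule, since $\langle\,\cdot\,\rangle_q$ is a homomorphism for~$\blok$ and~$\mathfrak{d}$ acts as a derivation on both sides. One bookkeeping slip worth noting: the reduction of $\mathfrak{d}\langle T_{k,l}\rangle_q$ yields $k(l-1)D^{l-2}G_{k-l+2}$ when $k\geq l$ and $k(l-1)D^{k-1}G_{l-k}$ when $l-k\geq 2$ (both equal to $k(l-1)\langle T_{k-1,l-1}\rangle_q$), not $k(l-1)D^{\min(k,l)-1}G_{|k-l|+2}$, but this does not affect the argument or its conclusion.
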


\begin{proof}
We already observed that the first of the three equality holds, and the second is the homogeneity statement. Hence, it suffices to prove the last statement. Using~(\ref{eq:D^n}) we find that for~$a\geq 0, b\geq 2$ one has 
\[\mathfrak{d}(D^aG_b) \= a(a+b-1)D^{a-1}G_b-\tfrac{1}{2}\delta_{a+b-2}.\]
Hence,
\[\mathfrak{d}\langle T_{k,l}\rangle_q \= k(l-1)\langle T_{k-1,l-1}\rangle_q-\delta_{k+l-2} \= \langle \mathfrak{d}\,T_{k,l}\rangle_q\]
 and the last statement follows from the Leibniz rule. 
\end{proof}

\paragraph{Rankin--Cohen brackets}\label{par:rc} 
The~$\sltwo$-action allows us to define Rankin--Cohen brackets on~$\TA$. 
\begin{defn} For two elements~$f,g\in \TA$ and $n\geq 0$ the~$n$th Rankin--Cohen bracket is given by
\begin{align}\label{defn:rc}[f,g]_n \= \sum_{\substack{r,s\geq 0\\r+s=n}}(-1)^r\binom{k+n-1}{s}\binom{l+n-1}{r}D^rf\blok D^sg.\end{align}
\end{defn}
Note that the formula~(\ref{defn:rc}) would have defined the Rankin--Cohen brackets on~$\widetilde{M}$ if~$D$ acts by~$q\pdv{}{q}$ and the induced product is replaced by the usual product, whereas in this line~$D$ acts on~$\TA$ as explained in the previous sections. 

If~$f,g\in \ker\mathfrak{d}$, then~$\langle f\rangle_q$ and~$\langle g \rangle_q$ are modular forms. The Rankin--Cohen bracket of two modular forms is a modular form; analogously we have:
\begin{prop}\label{prop:ker}
If~$f,g\in \ker\mathfrak{d}$, 
then~$[f,g]_n\in \ker\mathfrak{d}$. 
\end{prop}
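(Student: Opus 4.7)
The plan is to reduce the problem to a direct application of the $\sltwo$-structure on $\TA$ (Theorem~\ref{thm:sl2bo}), combined with the fact that $\mathfrak{d}$ is a derivation with respect to the induced product $\blok$. Both facts were established in the preceding paragraph. The argument is then the classical Rankin--Cohen computation carried out inside $(\TA,\blok)$ rather than inside $\widetilde{M}$.

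First I would use the commutation relation~\eqref{eq:D^n} applied to $f$ and $g$. Assuming $f,g$ are homogeneous of weights $k,l$ respectively, $D^r f$ has weight $k+2r$, and since $\mathfrak{d}f=0$, the relation gives
\begin{equation*}
\mathfrak{d}(D^r f) \= r(k+r-1)\,D^{r-1}f, \qquad \mathfrak{d}(D^s g) \= s(l+s-1)\,D^{s-1}g.
\end{equation*}
By linearity and the grading on $\TA$, it suffices to treat this homogeneous case.

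Next I would apply $\mathfrak{d}$ term by term to \eqref{defn:rc}, using the Leibniz rule for $\mathfrak{d}$ with respect to $\blok$, obtaining
\begin{equation*}
\mathfrak{d}[f,g]_n \= \sum_{r+s=n}(-1)^r\binom{k+n-1}{s}\binom{l+n-1}{r}\Bigl(r(k+r-1)\,D^{r-1}f\blok D^s g \+ s(l+s-1)\,D^r f\blok D^{s-1}g\Bigr).
\end{equation*}
Then I would reindex: in the first sum set $r'=r-1$, $s'=s$ and in the second set $r'=r$, $s'=s-1$, so that both sums are indexed by $r'+s'=n-1$. After collecting the coefficient of $D^{r'}f\blok D^{s'}g$, the vanishing reduces to the elementary binomial identity
\begin{equation*}
(s'+1)(l+s')\binom{k+n-1}{s'+1}\binom{l+n-1}{r'} \= (r'+1)(k+r')\binom{k+n-1}{s'}\binom{l+n-1}{r'+1},
\end{equation*}
which I would verify by rewriting $\binom{k+n-1}{s'+1}=\frac{k+r'}{s'+1}\binom{k+n-1}{s'}$ and $\binom{l+n-1}{r'+1}=\frac{l+s'}{r'+1}\binom{l+n-1}{r'}$ (using $r'+s'=n-1$); both sides then collapse to $\binom{k+n-1}{s'}\binom{l+n-1}{r'}(k+r')(l+s')$.

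There is no real obstacle here: the proof is a verbatim transcription of the classical Rankin--Cohen modular-form argument, made possible precisely because Theorem~\ref{thm:sl2bo} provides the same $\sltwo$-commutation relations on $\TA$ that hold on $\widetilde{M}$, and because $\mathfrak{d}$ is a derivation for $\blok$. The only point requiring attention is the bookkeeping in the reindexing step, and the fact that we must work with $\blok$ (not the pointwise product) so that $\mathfrak{d}$ is indeed a derivation on each summand.
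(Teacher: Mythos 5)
Your proof is correct and follows essentially the same route as the paper: apply $\mathfrak{d}$ to the bracket term by term via the Leibniz rule for $\blok$ and the commutation relation~\eqref{eq:D^n}, then observe that after reindexing the terms cancel in pairs (the paper phrases this cancellation as a ``telescoping sum'', which is exactly the binomial identity you verify explicitly). No gaps; your explicit check of the identity using $r'+s'=n-1$ is a fine way to make the telescoping precise.
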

\begin{proof}
Using~(\ref{eq:D^n}), we find that
\begin{align}
\mathfrak{d}[f,g]_n \= \sum_{\substack{r,s\geq 0\\r+s=n}}&(-1)^r\frac{(k+n-1)!}{s!(k+r-2)!}\frac{(l+n-1)!}{(r-1)!(k+s-1)!}D^{r-1}f\blok D^sg \+\\
&(-1)^r\frac{(k+n-1)!}{(s-1)!(k+r-1)!}\frac{(l+n-1)!}{r!(l+s-2)!}D^rf\blok D^{s-1}g,
\end{align}
where~$\frac{1}{(-1)!}$ should taken to be~$0$. This is a telescoping sum, vanishing identically. 
\end{proof}

\begin{remark}The above bracket makes the algebra~$\TA$ into a Rankin--Cohen algebra, meaning the following. Let~$A_*=\oplus_{k\geq 0} A_k$ be a graded~$K$-vector space with~$A_0=K$ and~$\dim A_k<\infty$ (for us~$A=\TA$). We say~$A$ is a \emph{Rankin--Cohen algebra} if there are bilinear operations~$[\,,\,]_n:A_k \otimes A_l \to A_{k+l+2n}\ (k, l, n> 0)$ which satisfy all the algebraic identities satisfied by the Rankin--Cohen brackets on~$\widetilde{M}$ \cite{Zag94}. 
\end{remark}

\paragraph{A restricted~\texorpdfstring{$\sltwo$}{sl2}-action}
\Cref{thm:sl2bo} does not make~$\SA$ into an~$\sltwo$-algebra. Namely,~$D$ does not preserve~$\SA$. However, if we allow ourselves to deform the~$\mathfrak{sl}_2$-triple~$(D, \mathfrak{d}, W)$ as in \cite{vI18}, we can define an~$\sltwo$-action on~$\SA$. This action, however, does not make~$\SA$ into an~$\sltwo$-algebra, as the deformed operators are not derivations.

The operator taking the role of~$\mathfrak{d}$ is the operator~$\mathfrak{s}:\SA_k \to \SA_{k-2}$ defined by
\[\mathfrak{s} \= \frac{1}{2}\sum_{k,l\geq 0}(k+l) S_{k+l}\frac{\partial^2}{\partial S_{k+1}\,\partial S_{l+1}}-\frac{1}{2}\frac{\partial}{\partial S_2}.\]
The operator~$D$ is replaced by multiplication with~$S_2$.
\begin{lem}\label{lem:sl2}
The triple~$(S_2,\mathfrak{s} ,W-\tfrac{1}{2})$ forms an~$\sltwo$-triple of operators acting on~$\SA$.
\end{lem}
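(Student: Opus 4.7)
The plan is to verify the three defining relations of an $\sltwo$-triple directly. Writing $H = W - \tfrac{1}{2}$, $X = S_2$ (multiplication), $Y = \mathfrak{s}$, we must check
\[
[H,X] \= 2X, \qquad [H,Y] \= -2Y, \qquad [Y,X] \= H
\]
as operators on $\SA = \q[S_2, S_4, S_6, \ldots]$.

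The first two relations follow purely by weight-homogeneity, and I would dispatch them first. Since $W$ restricted to $\SA$ is the Euler operator $\sum_{j \geq 2,\, j\text{ even}} j\,S_j\,\partial/\partial S_j$, the constant shift by $-\tfrac{1}{2}$ commutes with everything, so $[H,X] = [W,X]$ and $[H,Y] = [W,Y]$. Multiplication by $S_2$ is a homogeneous operator of weight $+2$, which immediately gives $[W,S_2] = 2S_2$. Each summand $S_{k+l}\,\partial^2/(\partial S_{k+1}\partial S_{l+1})$ in $\mathfrak{s}$ raises the weight by $(k+l) - (k+1) - (l+1) = -2$, and the correction term $\partial/\partial S_2$ also has weight $-2$; so $[W,\mathfrak{s}] = -2\mathfrak{s}$, as required.

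The substantive calculation is the third relation. For any $f \in \SA$, apply the Leibniz rule to obtain
\[
\frac{\partial^2}{\partial S_{k+1}\,\partial S_{l+1}}(S_2 f) \= \delta_{k,1}\,\frac{\partial f}{\partial S_{l+1}} \+ \delta_{l,1}\,\frac{\partial f}{\partial S_{k+1}} \+ S_2\,\frac{\partial^2 f}{\partial S_{k+1}\,\partial S_{l+1}},
\]
and $\partial(S_2 f)/\partial S_2 = f + S_2\,\partial f/\partial S_2$. Substituting into the definition of $\mathfrak{s}$, the terms involving $S_2\,\partial^2 f/(\partial S_{k+1}\partial S_{l+1})$ and $S_2\,\partial f/\partial S_2$ reassemble to $S_2\,\mathfrak{s}(f)$ and cancel in $[\mathfrak{s},S_2](f) = \mathfrak{s}(S_2 f) - S_2\,\mathfrak{s}(f)$. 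What remains is
\[
[\mathfrak{s}, S_2](f) \= \frac{1}{2}\sum_{k,l\geq 0}(k+l)\,S_{k+l}\Bigl(\delta_{k,1}\,\frac{\partial f}{\partial S_{l+1}} + \delta_{l,1}\,\frac{\partial f}{\partial S_{k+1}}\Bigr) - \frac{1}{2}f.
\]
By the symmetry $k \leftrightarrow l$, the two Kronecker contributions are equal, and after the change of variable $j = l+1$ (so $j$ is even and $\geq 2$) the sum collapses to $\sum_{j} j\,S_j\,\partial f/\partial S_j = W(f)$. Hence $[\mathfrak{s}, S_2](f) = W(f) - \tfrac{1}{2}f = H(f)$.

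I expect no serious obstacle; the only subtlety is bookkeeping the index ranges in the sum defining $\mathfrak{s}$, since $\SA$ contains only $S_k$ with $k$ even, so effectively $k,l$ are odd and $\partial/\partial S_j$ is understood to vanish for other $j$. This parity restriction is precisely what makes the surviving sum $\sum_{j\text{ even}} j\,S_j\,\partial/\partial S_j$ coincide with $W|_\SA$ rather than some shifted operator, which is why the constant $-\tfrac{1}{2}$ is needed in the definition of $H$.
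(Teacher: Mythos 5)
Your proposal is correct and follows essentially the same route as the paper: the paper's proof consists of exactly the computation $[\mathfrak{s},S_2]f=\sum_k(k+1)S_{k+1}\partial f/\partial S_{k+1}-\tfrac12 f=(W-\tfrac12)f$, with the other two relations dispatched by the observation that $S_2$ raises and $\mathfrak{s}$ lowers the weight by $2$; you merely spell out the Leibniz-rule bookkeeping that the paper leaves implicit. One small correction to your closing remark: the constant $-\tfrac12$ in $H$ arises from the $-\tfrac12\,\partial/\partial S_2$ term of $\mathfrak{s}$ acting on the factor $S_2$, not from the parity restriction on the generators.
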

\begin{proof}
Observe that
\[[\mathfrak{s} ,S_2]f\=\sum_{k}(k+1)S_{k+1}\frac{\partial}{\partial S_{k+1}}f-\tfrac{1}{2}f\=(W-\tfrac{1}{2})f.\]
As~$\mathfrak{s}$ and~$S_2$ decrease, respectively, increase the weight by~$2$, the claim follows. 
\end{proof}

\begin{thm}\label{thm:sl2}
The~$q$-bracket~$\langle\, \cdot\, \rangle_q: \SA \to \quasimodular$ is an equivariant mapping with respect to the~$\sltwo$-triple~${(S_2, \mathfrak{s},W-\tfrac{1}{2})}$ on~$\SA$ and the~$\sltwo$-triple~$(D+G_2, \mathfrak{d}, W-\tfrac{1}{2})$ on~$\quasimodular$, i.e., for all~$f\in \SA$ one has
\begin{equation}\label{eq:sl2q-2}
(D+G_2)\langle f\rangle_q \= \langle S_2f\rangle_q, \quad (W-\tfrac{1}{2})\langle f\rangle_q\= \langle (E-\tfrac{1}{2})f\rangle_q, \quad \mathfrak{d}\langle f\rangle_q\=\langle \mathfrak{s}  f\rangle_q\,.
\end{equation}
\end{thm}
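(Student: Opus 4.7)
The plan is to check the three equations in (\ref{eq:sl2q-2}) separately. The first, $(D+G_2)\langle f\rangle_q = \langle S_2 f\rangle_q$, is just (\ref{eq:Dq-brac}) rewritten using $\langle S_2\rangle_q = G_2$. The second follows from homogeneity: \cref{Sisquasimodular} says $\langle\cdot\rangle_q$ respects the weight grading on $\SA$, so $W\langle f\rangle_q = \langle Wf\rangle_q$ and subtracting $\tfrac12\langle f\rangle_q$ from both sides closes this case. The real content is the third equation $\mathfrak{d}\langle f\rangle_q = \langle \mathfrak{s}f\rangle_q$, which I would establish by direct comparison on monomials $f = S_{\vec{k}} = S_{k_1}\cdots S_{k_n}$ using the explicit formula for $\langle S_{\vec{k}}\rangle_q$ supplied by \cref{Sisquasimodular}.

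Write $\Phi(A) := D^{\ell(A)-1}G_{|\vec{k}_A|-2\ell(A)+2}$, so that $\langle S_{\vec{k}}\rangle_q = \sum_{\alpha\in\Pi(n)}\prod_{A\in\alpha}\Phi(A)$. Applying $\mathfrak{d}$ by the Leibniz rule on $\quasimodular$ reduces to evaluating $\mathfrak{d}\Phi(B)$. The building block is the commutation identity $\mathfrak{d}(D^aG_b) = a(a+b-1)D^{a-1}G_b - \tfrac12\delta_{a,0}\delta_{b,2}$ (obtained from (\ref{eq:D^n}) together with $\mathfrak{d}G_b = -\tfrac12\delta_{b,2}$), which on $\Phi(B)$ specializes to
\[\mathfrak{d}\Phi(B) \=(\ell(B)-1)(|\vec{k}_B|-\ell(B))\,\Phi(B_{/ii'}) \meno \tfrac12\,\delta_{\ell(B),1}\delta_{|\vec{k}_B|,2},\]
where $B_{/ii'}$ denotes any merge of $B$ (that is, $B$ with some pair $\{i,i'\}$ replaced by a single index of weight $k_i+k_{i'}-2$), the resulting $\Phi$-factor being independent of the pair chosen.

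On the right-hand side, expanding $\mathfrak{s}$ directly gives
\[\mathfrak{s}(S_{\vec{k}}) \= \sum_{\{i,i'\},\, i\ne i'}(k_i+k_{i'}-2)\,S_{\vec{k}'_{ii'}} \meno \tfrac12\sum_i\delta_{k_i,2}\,S_{\vec{k}\setminus i},\]
with $\vec{k}'_{ii'}$ the merge above and $\vec{k}\setminus i$ the deletion. Expanding each $\langle S_{\vec{k}'_{ii'}}\rangle_q$ by \cref{Sisquasimodular} produces a second sum indexed by $\Pi(n-1)$. The matching with $\mathfrak{d}\langle S_{\vec{k}}\rangle_q$ is then a natural bijection between triples $(\alpha\in\Pi(n),\, B\in\alpha,\, \{i,i'\}\subset B)$ and pairs $(\{i,i'\},\,\alpha'\in\Pi(n-1))$, where $\alpha'$ is obtained from $\alpha$ by collapsing $\{i,i'\}$ to a single element inside their common block; under this bijection all $\Phi$-factors agree, and the coefficients match via the elementary identity
\[\sum_{\{i,i'\}\subset B}(k_i+k_{i'}-2) \= (\ell(B)-1)|\vec{k}_B|-2\binom{\ell(B)}{2} \= (\ell(B)-1)(|\vec{k}_B|-\ell(B)).\]
The singleton ($\ell(B)=1$) pieces of $\mathfrak{d}\Phi(B)$ contribute exactly the $-\tfrac12\sum_i\delta_{k_i,2}\langle S_{\vec{k}\setminus i}\rangle_q$ term, since partitions of $[n]$ with $\{i\}$ as a singleton block are in bijection with $\Pi(n-1)$.

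The main obstacle is the combinatorial bookkeeping; no single step is deep. A tempting structural shortcut is to set $\psi(f):=\mathfrak{d}\langle f\rangle_q-\langle\mathfrak{s}f\rangle_q$ and, using the $\sltwo$-relation $[\mathfrak{s},S_2]=W-\tfrac12$ of \cref{lem:sl2} together with the already-proven first equation, to deduce $\psi(S_2 f)=(D+G_2)\psi(f)$; combined with the immediate check $\psi(S_{2k})=0$ (both sides equal $-\tfrac12\delta_{k,1}$), this forces $\psi$ to vanish on every monomial of the form $S_2^a S_{2k}$. Such a reduction does not cover monomials like $S_4 S_6$ that involve two or more generators $S_{2k}$ with $k\ge 2$, so the direct combinatorial verification above appears to be essential.
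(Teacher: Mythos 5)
Your proposal is correct and matches the paper's own proof in all essentials: the first two identities are handled exactly as in the paper, and for the third the paper likewise expands $\langle S_{\vec{k}}\rangle_q$ via \cref{Sisquasimodular}, applies $\mathfrak{d}$ using~(\ref{eq:D^n}) together with $\mathfrak{d}G_k=-\tfrac12\delta_{k,2}$, and matches against $\langle\mathfrak{s}S_{\vec{k}}\rangle_q$ through the same coefficient identity $\sum_{\{i,i'\}\subset I}(k_i+k_{i'}-2)=(\ell(I)-1)(|\vec{k}_I|-\ell(I))$ and the same singleton-block bookkeeping for the $-\tfrac12\delta_{k_i,2}$ terms. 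Your explicit bijection between triples $(\alpha,B,\{i,i'\})$ and pairs $(\{i,i'\},\alpha')$ is just a slightly more spelled-out phrasing of the paper's regrouping of the sum over $\beta\in\Pi(n)$ with $\{i,j\}$ in a common block.
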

\begin{proof}
The first of the three equalities in~(\ref{eq:sl2q-2}) follows from the definition of the~$q$-bracket; the second is the homogeneity statement of Theorem~\ref{thm:sl2bo}. Hence, it remains to prove the last equation~$\mathfrak{d}\langle f\rangle_q=\langle \mathfrak{s}  f\rangle_q\, .$

Given~$\vec{k}\in \n^n$, let~$\vec{k}^{i}\in \n^{n-1}$ be given by~$\vec{k}^i:=(k_1,\ldots, k_{i-1},k_{i+1},\ldots, k_n)$ omitting~$k_i$. Similarly, define~$\vec{k}^{i,j}\in \n^{n-2}$ by omitting~$k_i$ and~$k_j$. Then
\[ \mathfrak{s}  S_{\vec{k}} \=\sum_{i\neq j}(k_i+k_j-2) S_{k_i+k_j-2} S_{\vec{k}^{i,j}} \meno \frac{1}{2}\sum_{i:\, k_i=2} S_{\vec{k}^{i}}\, .
\]
By \cref{Sisquasimodular} one finds
	\[\left\langle S_{k_i+k_j-2} S_{\vec{k}^{i,j}} \right\rangle_q \= \sum_{\substack{\beta\in \Pi(n)\\\exists I\in \beta:\, \{i,j\}\subset I}}\hspace{-15pt}D^{\ell(I)-2}\Eis_{|k_{I}|-2\ell(I)+2}\prod_{B\neq I} D^{\ell(A)-1}\Eis_{|k_B|-2\ell(A)+2}\,.\]
For~$I\in \beta$ and~$\vec{l}\in \n^I$, let
\[ C(I,\vec{l})\defis\sum_{i,j\in I, i\neq j} (l_i+l_j-2) \= (\ell(I)-1)(|\vec{l}|-\ell(I)).\]
It follows that~$\sum_{i\neq j} (k_i+k_j-2)\left\langle S_{k_i+k_j-2} S_{\vec{k}^{i,j}} \right\rangle_q~$ equals
\[\sum_{\beta\in \Pi(n)}\sum_{I\in \beta}2C(I,\vec{k})D^{\ell(I)-2}\Eis_{|k_{I}|-2\ell(I)+2}\prod_{B\neq I} D^{\ell(B)-1}\Eis_{|k_B|-2\ell(B)+2}\,.\]
On the other hand, observe that if~$f$ is of weight~$|\vec{l}|-2\ell(I)+2$, Equation~(\ref{eq:D^n}) yields
\[[\mathfrak{d},D^{\ell(I)-1}]f \= C(I,\vec{l})D^{\ell(I)-2}f.\]
 Hence, using~$\mathfrak{d}G_k \= -\tfrac{1}{2}\delta_{k,2}$, we obtain
 \[[\mathfrak{d},D^{\ell(I)-1}]G_{|k_{I}|-2\ell(I)+2} \= C(B,\vec{k}_I)D^{\ell(I)-2}G_{|k_{I}|-2\ell(I)+2} - \frac{1}{2}\delta_{\vec{k}_I,(2)}\,.\]
Therefore,
\begin{align}
\mathfrak{d}\langle S_{\vec{k}} \rangle_q &\= \sum_{\beta\in \Pi(n)} \sum_{I\in \beta} C(I,\vec{k}_I)D^{\ell(I)-2}\Eis_{|k_{I}|-2\ell(I)+2}\prod_{B\neq I} D^{\ell(B)-1}\Eis_{|k_B|-2\ell(B)+2} + \\
&\quad\quad -\frac{1}{2}\sum_{i:\, k_i=2} \sum_{\beta\in \Pi([n]\backslash\{i\})} \prod_{B\in \beta} D^{\ell(B)-1}\Eis_{|k_B|-2\ell(B)+2}\,,
\end{align}
which by the above reasoning is exactly equal to~$\langle \mathfrak{s} S_{\vec{k}}\rangle_q\,$. 
\end{proof}

\section{Relating the two products}\label{sec:2}
\paragraph{The structure constants}\label{par:coef}
In \cref{prop:computephi}, we deduced that 
\[T_{k_1,f_1}\cp \ldots\cp  T_{k_n,f_n}\=T_{|\vec{k}|,g} \quad \text{with} \quad g(f_1,\ldots,f_n)\=\sum_{\alpha\in\Pi(n)} \mob \partial^{\ell(\alpha)-1}\bigconv_{A\in \alpha} f_A.\]
In the particular case that~$f_1=\ldots=f_n$ is the identity function, we saw in \cref{lem:computephi} that~$g=\Faulhaber_n$. If~$f_1,\ldots,f_n$ are Faulhaber polynomials, the function~$g$ is not necessarily equal Faulhaber polynomial on all $m\in \n$, but, by \cref{lem:convispol},~$\partial g$ equals some polynomial. Also, using~$g$ is uniquely determined by~$\partial g$, the function~$g$ equals some polynomial. We expand~$g$ as a linear combination of Faulhaber polynomials. 

\begin{defn}\label{def:c}
Given integers~$l_1,\ldots, l_n$, we define \emph{the structure constants}~$C^{\vec{l}}_i$ by
\[g(\Faulhaber_{l_1},\ldots, \Faulhaber_{l_n}) \= \sum_{i=0}^{|\vec{l}|-1} C_{i}^{\vec{l}}\, \Faulhaber_{|\vec{l}|-i}\, .\]
\end{defn}
Observe that~$C_{i}^{\vec{l}}=0$ for odd~$i$, as~$\partial g$ is even or odd. \Cref{lem:computephi} is the statement 
\[C_{i}^{(1,\ldots,1)} \= \begin{cases} 1 & i=0 \\ 0 & \text{else.} \end{cases}\]
More generally, by \cref{prop:computephi}(\ref{prop:computephi-i}) one has~$C^{1,\vec{l}}_i=C^{\vec{l}}_i$, so that w.l.o.g.\ we can assume~$l_i>1$. 
In this section, we give an explicit, but involved, formula for these coefficients in terms of Bernoulli numbers and binomial coefficients. In order to do so, for~$l_1,l_2\geq 1$ and~$i\in \z_{\geq 0}$, we introduce the following numbers:
\[\coef_i^{l_1,l_2} \defis \begin{cases} \frac{(l_1-1)!(l_2-1)!}{(l_1+l_2-1)!} & i=0, \\ \zeta(1-i)\left((-1)^{l_2}\binom{l_1-1}{i-l_2}+(-1)^{l_1}\binom{l_2-1}{i-l_1}\right) & i>0,\end{cases}\]
which by \cite[Proposition~A.10]{AIK14} satisfy
\[\sum_{i=0}^{l_1+l_2-2}\coef_i^{l_1,l_2}\frac{B_{l_1+l_2-i}}{l_1+l_2-i} = (-1)^{l_1l_2}\frac{B_{l_1+l_2}-B_{l_1}B_{l_2}}{l_1 l_2}.\]
Note that~$\zeta(1-i)=(-1)^{i+1}\frac{B_i}{i}$ for~$i\geq 1$. 
The following polynomials can be expressed in terms of these coefficients:
\begin{lem} For all~$l_1,l_2,\ldots,l_r\geq 2$ one has the following identities:
\begin{enumerate}[\upshape(i)]
\item $\displaystyle\Faulhaber_{l_1}(x) \= \sum_{i=0}^{\infty} \coef_i^{l_1,1} x^{l_1-i}$;
\item\label{it:fhii} $\displaystyle(\partial\Faulhaber_{l_1}*\partial\Faulhaber_{l_2})(x) \= \sum_{i=0}^\infty \coef_i^{l_1,l_2}x^{l_1+l_2-i-1}$;
\item $\displaystyle\partial(\Faulhaber_{l_1}\cdots\Faulhaber_{l_r})(x) \= 2\sum_{|\vec{i}|\equiv 1\, (2)} \mathpzc{B}_{i_1}^{l_1,1}\cdots \mathpzc{B}_{i_r}^{l_r,1}x^{|\vec{l}|-|\vec{i}|}$.
\end{enumerate}
\end{lem}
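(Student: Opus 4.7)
\medskip

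\noindent\textbf{Proof plan.} I would treat the three identities in the stated order, since (ii) and (iii) both rely on (i).

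For (i), the plan is to reduce everything to the standard Bernoulli expansion. Using the relation $l\Faulhaber_l(x)=B_l(x+1)-B_l$ mentioned just before equation~(\ref{eq:faulhabersym}), together with the umbral identity $B_l(x+1)=\sum_{k=0}^{l}\binom{l}{k}B_k(1)\,x^{l-k}$ and the values $B_0(1)=1$, $B_1(1)=\tfrac12$, $B_k(1)=B_k$ for $k\ge 2$, one obtains
\[
\Faulhaber_{l_1}(x) \;=\; \tfrac{1}{l_1}x^{l_1}\;+\;\tfrac12 x^{l_1-1}\;+\;\sum_{k=2}^{l_1-1}\frac{B_k}{l_1}\binom{l_1}{k}x^{l_1-k}.
\]
Matching this term-by-term against $\sum_i \coef_i^{l_1,1}x^{l_1-i}$ and using $\zeta(1-i)=-B_i/i$ for $i\ge 1$ (so that the case $i=1$ gives $\tfrac12$, and the $\binom{0}{i-l_1}$ term in $\coef_i^{l_1,1}$ vanishes in the relevant range) settles (i).

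For (ii), since $l_1,l_2\ge 2$ and $\partial\Faulhaber_l$ agrees with $x^{l-1}$ as a polynomial, the convolution can be rewritten as
\[
(\partial\Faulhaber_{l_1}\conv\partial\Faulhaber_{l_2})(n)\;=\;\sum_{i=0}^{n}i^{l_1-1}(n-i)^{l_2-1}.
\]
Expanding $(n-i)^{l_2-1}$ by the binomial theorem and applying (i) to the inner sum $\sum_{i=0}^n i^{l_1+j-1}=\Faulhaber_{l_1+j}(n)$ yields a double sum; collecting the coefficient of $n^{l_1+l_2-1-i}$ leaves the identity
\[
\coef_i^{l_1,l_2}\;\stackrel{?}{=}\;\sum_{j=0}^{l_2-1}\binom{l_2-1}{j}(-1)^j\coef_i^{l_1+j,1}.
\]
For $i=0$ this is the classical Beta-function expansion of $\tfrac{(l_1-1)!(l_2-1)!}{(l_1+l_2-1)!}$, and for $i>0$ it reduces, after isolating the Kronecker-delta part of $\coef_i^{l_1+j,1}$, to the binomial identity
\[
\sum_{j=0}^{l_2-1}(-1)^j\binom{l_2-1}{j}\binom{l_1+j-1}{i-1}\;=\;(-1)^{l_2-1}\binom{l_1-1}{i-l_2},
\]
which I would verify by extracting the coefficient of $x^{i-1}$ in $(1+x)^{l_1-1}(1-(1+x))^{l_2-1}=(1+x)^{l_1-1}(-x)^{l_2-1}$.

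For (iii), the plan is to avoid computing $\partial$ of each monomial directly and instead exploit the reflection symmetry~(\ref{eq:faulhabersym}). Set $P(x)=\Faulhaber_{l_1}(x)\cdots\Faulhaber_{l_r}(x)$. Since each $l_i\ge 2$, the symmetry gives $\Faulhaber_{l_i}(x-1)=(-1)^{l_i}\Faulhaber_{l_i}(-x)$, hence $P(x-1)=(-1)^{|\vec l|}P(-x)$ and therefore
\[
\partial P(x)\;=\;P(x)-(-1)^{|\vec l|}P(-x).
\]
Inserting the expansion $P(x)=\sum_{\vec i}\coef_{i_1}^{l_1,1}\cdots\coef_{i_r}^{l_r,1}\,x^{|\vec l|-|\vec i|}$ from (i), the coefficient of $x^{|\vec l|-|\vec i|}$ acquires the factor $1-(-1)^{|\vec i|}$, which is $2$ when $|\vec i|$ is odd and $0$ otherwise. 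This is exactly the claimed formula.

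The only genuine obstacle is the binomial identity in step (ii); the other parts are essentially bookkeeping, and step (iii) is almost immediate once (i) and the reflection symmetry are combined.
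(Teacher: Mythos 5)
Your proposal is correct, and it reaches the lemma by a genuinely different route for the central identity (ii). The paper proves (ii) by a bivariate generating-series computation: it forms $\sum_{l_1,l_2}(\partial\Faulhaber_{l_1}\conv\partial\Faulhaber_{l_2})(n)\,z_1^{l_1-1}z_2^{l_2-1}/((l_1-1)!(l_2-1)!)=\frac{e^{nz_1}}{e^{z_1-z_2}-1}+\frac{e^{nz_2}}{e^{z_2-z_1}-1}$ and expands via the Bernoulli generating function, whereas you expand $(n-i)^{l_2-1}$ binomially, reduce to part (i), and finish with the beta-integral evaluation for $i=0$ and the coefficient extraction $\sum_j(-1)^j\binom{l_2-1}{j}\binom{l_1+j-1}{i-1}=(-1)^{l_2-1}\binom{l_1-1}{i-l_2}$ (your generating-function check of this identity is valid, and the isolated Kronecker-delta term of $\coef_i^{l_1+j,1}$ at $j=i-l_1$ reproduces exactly the $(-1)^{l_1}\binom{l_2-1}{i-l_1}$ summand of $\coef_i^{l_1,l_2}$). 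Your treatment of (i) via $l\Faulhaber_l(x)=B_l(x+1)-B_l$ is also a direct derivation where the paper merely invokes the well-known expansion "by generating series", and your argument for (iii) is identical to the paper's (reflection symmetry $\Faulhaber_l(x-1)=(-1)^l\Faulhaber_l(-x)$ applied to the product, then the parity factor $1-(-1)^{|\vec{i}|}$). The paper's generating-series route has the advantage of producing all coefficients $\coef_i^{l_1,l_2}$ in one stroke and of making the parity vanishing ($\coef_i^{l_1,l_2}=0$ for odd $i$ when $l_1,l_2>1$) transparent; your route is more elementary and isolates the only nontrivial input as a classical binomial identity. Two small points to keep straight in a write-up: the shift of the convolution limits from $\sum_{i=1}^{n-1}$ to $\sum_{i=0}^{n}$ is harmless only because $l_1,l_2\geq 2$, and since $\partial$ and $\conv$ are defined on functions on $\n$, the identities are first proved for all $n\in\n$ and then upgraded to polynomial identities because both sides are polynomials; also fix one convention for $B_1$ (equivalently use $\zeta(0)=-\tfrac12$ directly), since only the $i=1$ term, where you need $\coef_1^{l_1,1}=-\zeta(0)=\tfrac12$, is sensitive to it.
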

\begin{proof}
The first two equations, of which the former is the well-known expansion of the Faulhaber polynomials, follow by considering the corresponding generating series. In order to prove~(\ref{it:fhii}), we let~$n\in \n$ and consider
\begin{align}
\mathcal{G}(n)\defis&\sum_{l_1,l_2=1}^\infty(\partial\Faulhaber_{l_1}*\partial\Faulhaber_{l_2})(n) \frac{z_1^{l_1-1}}{(l_1-1)!} \frac{z_2^{l_2-1}}{(l_2-1)!} \\
\=&\sum_{m_1+m_2=n}e^{m_1z_1+m_2z_2}\\ 
\=& \frac{e^{nz_1}}{e^{z_1-z_2}-1}+\frac{e^{nz_2}}{e^{z_2-z_1}-1}.
\end{align}
As the generating series of the Bernoulli numbers~$\sum_{j=0}^\infty B_j \frac{z^j}{j!} = z(e^z-1)^{-1}$ implies that
\begin{align}
\frac{1}{e^{z_1-z_2}-1} 
\=\frac{1}{z_1-z_2} + \sum_{j=1}^\infty\sum_{i=0}^{j-1} \frac{B_j}{j}(-1)^{i} \frac{z_1^{j-i-1}z_2^{i}}{(j-1-i)!i!},\end{align}
we find 
\begin{align}
\mathcal{G}(n) 
&\=\sum_{k=1}^\infty\biggl(\,\sum_{i=0}^{k-1}\frac{z_1^iz_2^{k-i-1}}{i!(k-i-1)!}+\sum_{j=1}^\infty\sum_{i=0}^{j-1} \frac{B_j}{j}(-1)^{i} \biggl(\frac{z_1^{k+j-i-1}z_2^{i}}{(j-i-1)!i!}+\frac{z_1^iz_2^{k+j-i-1}}{i!(j-i-1)!}\biggr)\biggr)\frac{n^k}{k!} \\
&\= \sum_{l_1,l_2=1}^\infty \sum_{i=0}^\infty (-1)^i\coef_{i}^{l_1,l_2} \frac{z_1^{l_1-1}}{(l_1-1)!} \frac{z_2^{l_2-1}}{(l_2-1)!} n^{l_1+l_2-i-1}.
\end{align}
Since~$\coef_{i}^{l_1,l_2}$ vanishes for odd~$i$ if~$l_1,l_2>1$, this proves the second equation.
The third equation follows from the first by noting that 
\begin{align}\partial(\Faulhaber_{l_1}\cdots\Faulhaber_{l_r})(x)&\= (\Faulhaber_{l_1}\cdots\Faulhaber_{l_r})(x)-(-1)^{|\vec{l}|}(\Faulhaber_{l_1}\cdots\Faulhaber_{l_r})(-x).\qedhere\end{align}
\end{proof}
Using these identities, one obtains
 \[ C_i^l \= \coef^{1,1}_{i}=\delta_{i,0}, \quad C_i^{l_1,l_2} \= \coef_i^{l_1,1}+\coef_i^{l_2,1}-\coef_{i}^{l_1,l_2} \]
These easy expressions for small~$n$ are misleading, as~$6C_i^{l_1,l_2,l_3}$ equals
\[\frac{1}{4}\delta_{i,2} \+ 3\sum_{\substack{i_1,i_2 \equiv 0\, (2) \\ i_1+i_2=i}} \coef_{i_1}^{l_{1},1}\coef_{i_2}^{l_{2},1} \meno \sum_{\substack{i_1 \equiv 1\, (2), j_1 \\i_1+j_1=i}} \coef_{i_1}^{l_{1},1}\coef_{j_1}^{l_{1},l_{2}+l_{3}-i_1} \+ 2\sum_{j_1+j_2=i}\coef_{j_1}^{l_{1},l_{2}}\coef_{j_2}^{l_{1}+l_{2}-j_1,l_{3}}\]
up to full symmetrization, i.e., summing over all~$\sigma \in S_3$ with~$l_i$ replaced by~$l_{\sigma(i)}$. 
In general, given~$\alpha\in \Pi(n)$, write~$\alpha=\{A_1,\ldots, A_r\}$ and denote~$A^j=\cup_{i=1}^j A_j$. Also, for a vector~$\vec{k}$  and a set~$B$ we let~$k_B=\sum_{b\in B} k_b$. Then, the above observations allows us to write down the following formula, which is very amenable to computer calculation:
\begin{prop}\label{prop:structureconstants} Let~$l_1,\ldots, l_n>1$. Then
\[C_i^{\vec{l}} \= \sum_{\alpha \in \Pi(n)}\!2^{r}\mu(\alpha,\mathbf{1}) \!\sum_{\substack{i_1,\ldots,i_n\\ |\vec{i}_A|\equiv 1 \, (2)}}\left(\prod_{k=1}^n \coef_{i_k}^{l_k,1}\right)\!
 \left(\sum_{\substack{j_1,\ldots,j_{\ell(\alpha)-1}\\|\vec{i}|+|\vec{j}|=i+r}} \prod_{s=1}^{r-1} \coef^{l_{A^s}-j_{s-1},l_{A_{s+1}}-i_{A_{s+1}}+1}_{j_s}\right)\!\raisebox{-20pt}{.}\]
Here,~$j_0:=l_{A_0}-i_{A_0}$. 
\end{prop}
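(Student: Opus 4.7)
My plan is to extract the coefficients $C_i^{\vec l}$ directly from the monomial expansion of $\partial g$, exploiting that the discrete derivative~$\partial$ distributes cleanly over the convolution by~\eqref{convleibniz}. Starting from
\[g\=\sum_{\alpha\in\Pi(n)}\mu(\alpha,\mathbf{1})\,\partial^{\ell(\alpha)-1}\bigconv_{A\in\alpha}f_A\]
of Proposition~\ref{prop:computephi}(i) and the defining identity $g=\sum_i C_i^{\vec l}\,\Faulhaber_{|\vec l|-i}$, which gives $\partial g(x)=\sum_i C_i^{\vec l}\,x^{|\vec l|-i-1}$, one further application of $\partial$ pushes a derivative onto each of the $r=\ell(\alpha)$ factors of the convolution, producing
\[\partial g\=\sum_{\alpha\in\Pi(n)}\mu(\alpha,\mathbf{1})\bigconv_{A\in\alpha}\partial f_A.\]
Hence $C_i^{\vec l}$ is the coefficient of $x^{|\vec l|-i-1}$ in the right-hand side.

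I would then substitute the monomial expansion of $\partial f_A$ provided by part~(iii) of the lemma preceding the proposition,
\[\partial f_A(x)\=2\sum_{|\vec i_A|\text{ odd}}\Bigl(\prod_{k\in A}\coef_{i_k}^{l_k,1}\Bigr)x^{l_A-|\vec i_A|},\]
which immediately accounts for the global factor $2^r$, the parity condition $|\vec i_A|\equiv 1\,(2)$ imposed on every block, and the outer product $\prod_{k}\coef_{i_k}^{l_k,1}$ in the claimed formula. The remaining task is to evaluate the convolution $x^{m_1}*\cdots *x^{m_r}$ of the resulting monomials, with $m_s=l_{A_s}-|\vec i_{A_s}|$. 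Fixing an ordering $\alpha=\{A_1,\ldots,A_r\}$ and iterating part~(\ref{it:fhii}) of the lemma, $x^a*x^b=\sum_{j}\coef_j^{a+1,b+1}\,x^{a+b+1-j}$, one factor at a time introduces the summation variables $j_1,\ldots,j_{r-1}$ together with a product of $\coef$-coefficients whose second superscript at step $s$ is $m_{s+1}+1=l_{A_{s+1}}-i_{A_{s+1}}+1$ and whose first superscript is the running exponent accumulated through the previous step, plus one; under the stated convention on $j_0$ this is precisely $l_{A^s}-j_{s-1}$. The final monomial exponent $|\vec l|-|\vec i|+r-1-|\vec j|$ coincides with $|\vec l|-i-1$ exactly under the constraint $|\vec i|+|\vec j|=i+r$ displayed in the proposition.

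The hard part will be purely combinatorial bookkeeping in this iterated convolution: carefully tracking the running first superscript of $\coef_{j_s}$ through each application of part~(\ref{it:fhii}) and matching it with the symmetric form $l_{A^s}-j_{s-1}$ in the statement, and verifying that the expression is independent of the linear order chosen on $\{A_1,\ldots,A_r\}$ (as it must be, by commutativity and associativity of the convolution product). Once this bookkeeping is completed the identity reduces to a direct comparison of coefficients, and no ingredients beyond Proposition~\ref{prop:computephi}(i) and parts~(\ref{it:fhii}) and~(iii) of the preceding lemma are required.
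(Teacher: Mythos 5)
Your plan follows what is clearly the intended route (the paper itself gives no more than the remark that the preceding lemma ``allows one to write down'' the formula): apply $\partial$ once, use~(\ref{convleibniz}) to get $\partial g=\sum_{\alpha}\mu(\alpha,\mathbf{1})\bigconv_{A\in\alpha}\partial f_A$, expand each block by part~(iii) of the lemma, convolve the resulting monomials by iterating part~(\ref{it:fhii}), and compare coefficients of $x^{|\vec{l}|-i-1}$. The factor $2^{r}$, the parity conditions $i_A\equiv 1\,(2)$, the product $\prod_k\coef_{i_k}^{l_k,1}$, and the constraint $|\vec{i}|+|\vec{j}|=i+r$ all come out exactly as you describe.

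The gap is the one step you explicitly defer, and it is precisely where the content of the proposition sits: your assertion that the running first superscript ``is precisely $l_{A^s}-j_{s-1}$ under the stated convention on $j_0$'' is not what the iteration you set up produces. Iterating $x^{a}*x^{b}=\sum_j\coef_j^{a+1,b+1}x^{a+b+1-j}$ on $x^{m_1}*\cdots*x^{m_r}$ with $m_s=l_{A_s}-i_{A_s}$ gives at step $s$ the first superscript $l_{A^s}-i_{A^s}+s-(j_1+\cdots+j_{s-1})$; already at $s=1$ this is $l_{A_1}-i_{A_1}+1$, which equals $l_{A^1}-j_0$ only if $j_0=i_{A_1}-1$, which is not the stated convention $j_0=l_{A_0}-i_{A_0}$ under any reading of $A_0$. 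One can force the shape $l_{A^s}-J_{s-1}$ by passing to cumulative variables $J_{s-1}=i_{A^s}-s+\sum_{t<s}j_t$, but then the subscripts of the $\coef$'s and the constraint $|\vec{i}|+|\vec{j}|=i+r$ no longer refer to the same variables, so a plain renaming does not work. Indeed, a self-consistency check shows the displayed formula cannot be the verbatim output of your iteration: if step $s$ had first superscript $l_{A^s}-j_{s-1}$, part~(\ref{it:fhii}) would produce the exponent $l_{A^{s+1}}-j_{s-1}-i_{A_{s+1}}-j_s$, and the next step would have the claimed shape only when $j_{s-1}+i_{A_{s+1}}=1$, which fails in general (it holds automatically only in degenerate situations, e.g.\ singleton blocks, where $\coef_{i_k}^{l_k,1}=0$ for odd $i_k\geq 3$ forces $i_{A_{s+1}}=1$; this is why low\nobreakdash-$n$ checks such as $C^{l_1,l_2}_i$ do not detect the discrepancy). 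So either the superscripts must be carried correctly and the statement's indexing (including the meaning of $j_0$) justified by an explicit reindexing or a resummation identity among the $\coef$'s, or the claimed match is simply wrong; in your proposal this verification is exactly the ``bookkeeping'' left undone, so the argument is incomplete at its decisive point.
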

Note that the latter formula is written in an asymmetric way, but (by associativity of the convolution product) is symmetric in the~$l_i$. 

\paragraph{From the pointwise product to the induced product}\label{par:Tklf}
Suppose an element of~$\TA$ is given, written in the basis with respect to the pointwise product. How do we determine its (possibly mixed) weight and its representation in terms of the basis with respect to the~$\blok$ product? A first answer is given by applying M\"obius inversion to Equation~(\ref{def:connectedprod}), as given by Equation~(\ref{eq:grading}), i.e.,
\begin{equation}\label{eq:grading2} T_{\vec{k},\vec{l}} \= \sum_{\alpha\in \Pi(n)}\bigblok_{A\in \alpha}(T_{k_{A_1},l_{A_1}}\cp T_{k_{A_2},l_{A_2}}\cp \ldots). \end{equation}
However, as every factor~$T_{k_{A_1},l_{A_1}}\cp T_{k_{A_2},l_{A_2}}\cp \ldots$ in the above equation is a linear combination of generators of different weights, it is useful to have a recursive version of this result. For this, we write~$\pdv{}{T_{k,l}}$ for the derivative of~$f\in\TA$ in the former basis (with respect to the pointwise product) and~$\pdv{}{T_{\vec{k},\vec{l}}}$ for~$\prod_i \pdv{}{T_{k_i,l_i}}$.

\begin{prop}Let~$k,l\geq 1$. There exist differential operators~$\mathfrak{s}_{i,j}$ for all~$i,j\in \mathbb{Z}$  such that~$\mathfrak{s}_{i,j}=0$ if~$j<0$ and for all~$f\in \TA$ one has
\[T_{k,l}f \= \sum_{i\geq 0}\sum_{j\geq -l+1} T_{k+i,l+j} \blok \mathfrak{s}_{i,j}(f).\]
Explicitly, 
\[
\mathfrak{s}_{i,j} \= \sum_{|\vec{a}|=i} \mathfrak{t}_{\vec{a},j}, \quad\quad 
\mathfrak{t}_{\vec{a},j} \= \sum_{\vec{b}} C^{l,\vec{b}}_{|\vec{b}|-j} \pdv{}{T_{\vec{a},\vec{b}}},
\]
where~$\vec{a}$ and~$\vec{b}$ are vectors of integers of the same length and with $|\vec{a}|=i$, the structure constants~$C^{l,\vec{b}}_{|\vec{b}|-j}$ are as in \cref{prop:structureconstants} and~${l,\vec{b}}$ denotes the vector $(l,b_1,b_2,\ldots)$.
\end{prop}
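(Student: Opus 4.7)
The plan is to reduce by linearity of both sides to the case where $f = T_{\vec{a}, \vec{b}}$ is a pointwise monomial of degree $n$. For such an $f$, the product $T_{k,l}\cdot f$ is itself the pointwise monomial $T_{(k, \vec{a}), (l, \vec{b})}$ of degree $n+1$. I would then expand it via \cref{eq:grading2} as a sum of induced products of connected products indexed by set partitions $\alpha$ of $\{0, 1, \ldots, n\}$, where the index $0$ marks the distinguished factor $T_{k,l}$.

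Next, I would split the resulting sum according to the block $A_0 \in \alpha$ containing $0$. Writing $A_0 = \{0\} \cup C$ for some $C \subseteq \{1, \ldots, n\}$, the marked block contributes the connected product $T_{k,l} \cp T_{a_{c_1}, b_{c_1}} \cp \cdots \cp T_{a_{c_m}, b_{c_m}}$, while the remaining blocks form an arbitrary set partition of $\bar C := \{1, \ldots, n\} \setminus C$. Summing over this remaining partition and invoking \cref{eq:grading2} in reverse reassembles the pointwise product $T_{\vec{a}_{\bar C}, \vec{b}_{\bar C}}$. By \cref{prop:computephi} together with \cref{def:c}, the contribution of the marked block equals $\sum_{i'} C_{i'}^{(l, \vec{b}_C)}\, T_{k + |\vec{a}_C|,\, l + |\vec{b}_C| - i'}$, so that
\[
T_{k,l}\, f \= \sum_{C \subseteq \{1, \ldots, n\}}\,\sum_{i' \geq 0} C_{i'}^{(l, \vec{b}_C)}\, T_{k + |\vec{a}_C|,\, l + |\vec{b}_C| - i'} \blok T_{\vec{a}_{\bar C}, \vec{b}_{\bar C}}\,.
\]

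The final step is to match this expression with the stated formula. Reparametrizing by $i = |\vec{a}_C|$ and $j = |\vec{b}_C| - i'$ collects the terms with a common $\blok$-factor $T_{k+i,\,l+j}$, and the remaining inner sum over subsets $C$ is to be identified with $\mathfrak{s}_{i,j}(T_{\vec{a}, \vec{b}})$: for fixed $(\vec{a}^*, \vec{b}^*) = (\vec{a}_C, \vec{b}_C)$, the complementary monomial $T_{\vec{a}_{\bar C}, \vec{b}_{\bar C}}$ is precisely what $\pdv{}{T_{\vec{a}^*, \vec{b}^*}}$ extracts from $T_{\vec{a}, \vec{b}}$. The vanishing $\mathfrak{s}_{i,j} = 0$ for $j < -l+1$ is then automatic from the vanishing of the structure constants $C_{i'}^{\vec{l}}$ for $i' > |\vec{l}| - 1$ built into \cref{def:c}. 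The main obstacle is the combinatorial bookkeeping in this matching step: since $C_i^{\vec{l}}$ is symmetric in $\vec{l}$ while $\mathfrak{s}_{i,j}$ is phrased as a sum over \emph{ordered} tuples $\vec{a}, \vec{b}$, one must verify that the multiplicities of repeated pairs in $(\vec{a}, \vec{b})$ interact correctly with the multinomial factors coming from $\pdv{}{T_{\vec{a}^*, \vec{b}^*}} T_{\vec{a}, \vec{b}}$.
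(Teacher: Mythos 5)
Your proposal is correct and follows essentially the same route as the paper's proof: reduce to monomials, expand $T_{k,l}f$ via the M\"obius-inverted formula~(\ref{eq:grading2}), isolate the block containing the distinguished factor $T_{k,l}$, reassemble the complementary blocks into a pointwise monomial (extracted by $\pdv{}{T_{\vec{a},\vec{b}}}$), expand the marked connected product via the structure constants of \cref{def:c}, and reindex. The multiplicity bookkeeping you flag is handled implicitly in the paper by the same convention on $\pdv{}{T_{\vec{a},\vec{b}}}$, so nothing further is needed.
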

\begin{proof}
By linearity, it suffices to prove the statement for monomials~$T_{\vec{k},\vec{l}}$. Hence, assume~$f=T_{\vec{k},\vec{l}}$. Applying  ~(\ref{eq:grading2}), extracting the factor containing~$T_{k,l}$ and applying~(\ref{eq:grading2}) again, yields
\begin{align}
T_{k,l}f&\=\sum_{A\subset [n]}(T_{k,l}\cp T_{k_{A_1},l_{A_1}}\cp T_{k_{A_2},l_{A_2}}\cp \ldots)\odot T_{\vec{k}_{[n]\backslash A},\vec{l}_{[n]\backslash A}} \\
&\=\sum_{\vec{a},\vec{b}}(T_{k,l}\cp T_{a_1,b_1}\cp T_{a_2,b_2}\cp \ldots)\odot \pdv{}{T_{\vec{a},\vec{b}}} f.
\end{align}
By \cref{def:c} this equals
\begin{align}
T_{k,l}f &\= \sum_{\vec{a},\vec{b}}\sum_{j\in \z} C_{j}^{l,\vec{b}} T_{|\vec{a}|+k,|\vec{b}|+l-j} \odot\pdv{}{T_{\vec{a},\vec{b}}} f
\end{align}
Replacing~$j$ by~$-j+|\vec{b}|$ and writing~$i=|\vec{a}|$, one obtains
\begin{align}
T_{k,l}f &\= \sum_{i\geq0}\sum_{j\in \z}  T_{k+i,l+j} \odot\sum_{|\vec{a}|=i}\sum_{\vec{b}}C_{|\vec{b}|-j}^{l,\vec{b}}\pdv{}{T_{\vec{a},\vec{b}}} f,	
\end{align}
as desired. 
\end{proof}

\begin{cor} For all~$k,l\geq 1$ and~$f\in \TA$ one has
\[\langle T_{k,l}f \rangle_q \= \sum_{a\geq 0}\sum_{b\geq 2} D^{a}G_{b} \langle\mathfrak{T}_{k,l}^{a,b}f\rangle_q\, ,\]
where~$\mathfrak{T}_{k,l}^{a,b}=\mathfrak{s}_{a-l+1,a+b-k-1}+\mathfrak{s}_{a+b-l,a-k}$. 
\end{cor}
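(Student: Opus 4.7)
The plan is to take the $q$-bracket of the identity
\[T_{k,l}f \= \sum_{i\geq 0,\, j\geq -l+1} T_{k+i,l+j} \blok \mathfrak{s}_{i,j}(f)\]
from the preceding proposition and then substitute the explicit formula for $\langle T_{k+i,l+j}\rangle_q$ provided by \cref{prop:ap2}. By the multiplicativity of the $q$-bracket over the induced product (\cref{prop:prod}), the left-hand side becomes
\[\sum_{i\geq 0,\, j\geq -l+1}\langle T_{k+i,l+j}\rangle_q\,\langle\mathfrak{s}_{i,j}(f)\rangle_q\,.\]

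By \cref{prop:ap2}, each nonzero $\langle T_{k+i,l+j}\rangle_q$ is a single monomial $D^aG_b$ with $a\geq 0,\ b\geq 2$. In case~1, where $k+i\geq l+j$, one has $(a,b)=(l+j-1,\,k+i-l-j+2)$; in case~2, where $k+i\leq l+j-2$, one has $(a,b)=(k+i,\,l+j-k-i)$. Because $k+l$ is even and only pairs $(i,j)$ with $i+j$ even contribute to the sum, the excluded value $k+i-l-j=\pm 1$ never occurs, so the two cases partition the index set of the double sum cleanly.

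The final step is to invert the linear substitutions in each case: case~1 yields $(i,j)=(a+b-k-1,\,a-l+1)$ and case~2 yields $(i,j)=(a-k,\,a+b-l)$. Regrouping the two resulting sums by the shared variable $(a,b)$, the two $\mathfrak{s}$-operators combine into $\mathfrak{T}_{k,l}^{a,b}$, producing the asserted formula. The only bookkeeping point is that some pairs $(a,b)\in\z_{\geq 0}\times\z_{\geq 2}$ correspond to $(i,j)$ lying outside the original summation range (for instance $a+b<k+1$ in case~1 forces $i<0$); for every such $(a,b)$ one of the indices of the relevant $\mathfrak{s}_{i,j}$ is strictly negative, so by the convention $\mathfrak{s}_{i,j}=0$ for $j<0$ (naturally extended to $i<0$) the contribution vanishes, allowing the outer sum to be taken unrestricted over $a\geq 0$ and $b\geq 2$ as stated. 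The whole argument is thus a short, essentially formal computation; the only genuinely nontrivial ingredient is the preceding proposition, which already encodes the interaction between the pointwise and induced products.
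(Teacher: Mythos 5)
Your proposal is correct and follows essentially the same route as the paper: take the $q$-bracket of the expansion $T_{k,l}f=\sum_{i,j}T_{k+i,l+j}\blok\mathfrak{s}_{i,j}(f)$, use multiplicativity of the $q$-bracket over $\blok$, apply \cref{prop:ap2} in the two cases $k+i\geq l+j$ and $k+i\leq l+j-2$, and reindex by $(a,b)$; your extra remarks on parity and on vanishing of out-of-range $\mathfrak{s}_{i,j}$ only make explicit what the paper leaves implicit. Note that the operator you obtain, $\mathfrak{s}_{a+b-k-1,\,a-l+1}+\mathfrak{s}_{a-k,\,a+b-l}$, coincides with the paper's own proof, so the transposed indices in the displayed definition of $\mathfrak{T}_{k,l}^{a,b}$ in the statement are a typo rather than a defect of your argument.
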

\begin{proof} Distinguishing two cases in the previous result yields
\begin{align}
 \langle T_{k,l}f\rangle_q &\=\sum_{j< k+i-l} D^{l+j-1}G_{k+i-l-j+2} \blok \mathfrak{s}_{i,j}(f) \+  \sum_{i\geq 0}\sum_{j\geq k+i-l} D^{k+i}G_{l+j-k-i} \blok \mathfrak{s}_{i,j}(f)\\
&\= \sum_{a\geq 0}\sum_{b\geq 2} D^{a}G_{b} \langle(\mathfrak{s}_{a+b-k-1,a-l+1}+\mathfrak{s}_{a-k,a+b-l})(f)\rangle_q\,. \qedhere
\end{align}
\end{proof}

\section{Related functions on partitions}\label{sec:application}
We apply our results to interesting functions on partitions. 

\paragraph{Hook-length moments}\label{sec:hlm}
First of all, we focus on the hook-length moments~$H_k$ \cite[part III]{CMZ16}. These functions form a bridge between the symmetric algebra studied in this note and the shifted symmetric functions: the~$H_k$ themselves are shifted symmetric as
\begin{align}\label{eq:hooks} H_k(\lambda) \= \frac{1}{2}\sum_{i=0}^{k} \binom{k-2}{i-1}(-1)^i Q_i(\lambda)Q_{k-i}(\lambda).\end{align}
and they are also equal to the M\"oller transform of the symmetric~$S_k$, i.e.~$H_k=\moller(S_k)$, meaning the following. Denote~$z_\nu = \frac{n!}{|C_\nu|}$ with~$|C_\nu|$ the size of the conjugacy class corresponding to~$\nu$. Recall that
\[z_\nu \= \prod_{m=1}^\infty m^{r_m(\nu)}r_m(\nu)!.\]
 Given~$f\in \QtoP$, \emph{the M\"oller transform} of~$f$ at a partition~$\lambda\in \partitions(n)$ is given by \cite[Eqn~(45)]{Zag16}
\[\moller(f)(\lambda) \= \sum_{\nu\vdash n} z_\nu^{-1}\chi^\lambda(\nu)^2 f(\nu),\]
where the sum~$\nu\vdash n$ is over all partitions of size~$n$ and~$\chi^\lambda(\rho)$ denotes the the character of the representation corresponding to the partition~$\lambda$ evaluated at the conjugacy class corresponding to~$\rho$. Then,~$\langle\mathcal{M}(f)\rangle_q$ is a quasimodular form if and only if~$\langle f \rangle_q$ is a quasimodular form (which follows directly by the column orthogonality relations for the symmetric group). In the next section we study the M\"oller transform of elements of~$\TA$, but first, we explain the Murnaghan--Nakayama rule, used in \cite[part III]{CMZ16} to show equality between~$\moller(S_k)$ and~(\ref{eq:hooks}) and give two other expressions for the hook-length moments.

To start with the latter, \emph{the hook-length moments},  as their name suggests,  are defined as moments of the hook-lengths, i.e.
\[ H_k(\lambda) \= -\frac{B_k}{2k} + \sum_{\xi \in Y_\lambda} h(\xi)^{k-2},\]
where~$Y_\lambda$ denotes the Young diagram of a partition~$\lambda$ and~$h(\xi)$ denotes the hook-length of a cell~$\xi\in Y_\lambda$.

Next, the following constructions related to the Young diagram, give rise to the Murnaghan--Nakayama rule for the characters of the symmetric group. Given partitions~$\lambda,\nu$ with~$\nu_i\leq \lambda_i$ for all~$i$, we define the \emph{skew Young diagram}~$\lambda/\nu$ by removing the cells of~$Y_\nu$ from~$Y_\lambda$. Denote by~$|\lambda/\nu|=|\lambda|-|\nu|$ the number of cells of this diagram. We call~$\lambda/\nu$ a \emph{border strip} of~$\lambda$ if it is connected (through edges, not only through vertices) and contains no~$2\times2$-block. If~$\gamma=\lambda/\nu$ we write~$\lambda\setminus \gamma$ for~$\nu$. The \emph{height} of a border strip~$\gamma$ is defined to be one less than the number of columns and denoted by~$\height(\gamma)$.  
Given~$\vec{m}\in \n^s$, we let a \emph{border strip tableau}~$\gamma$ of type~$\vec{m}$ be a sequence~$\gamma_1,\ldots, \gamma_s$ such that~$\gamma_i$ is a border strip of~$\lambda\smallsetminus \gamma_1\smallsetminus\cdots \smallsetminus \gamma_{i-1}$ and~$|\gamma_i|=m_i$. Write~$Y_\gamma$ for the skew Young diagram consisting of all boxes of all the~$\gamma_i$ and write~$\height(\gamma)=\height(\gamma_1)+\ldots+\height(\gamma_s)$. Denote by~$\BST(\lambda,\vec{m})$ and~$\BST(\lambda/\nu,\vec{m})$ the set of all border strip tableau of type~$\vec{m}$ within~$\lambda$ and~$\lambda/\nu$, respectively. 

\begin{figure}[h!]
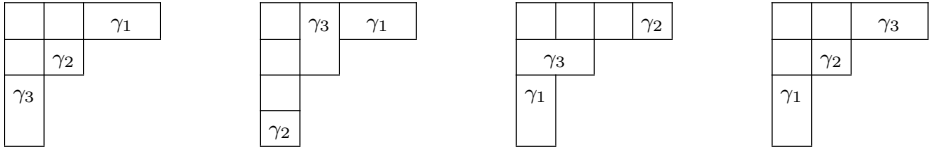
\begin{centering}{\scriptsize
\renewcommand{\arraystretch}{1.42}
\setlength\tabcolsep{3pt}
\begin{minipage}[h]{0.20\textwidth}
\begin{tabular}{|llll}
\hline
\multicolumn{1}{|l|}{} & \multicolumn{1}{l|}{}  & \multicolumn{2}{c|}{$\gamma_1$} \\ \cline{1-4} 
\multicolumn{1}{|l|}{} &  \multicolumn{1}{c|}{$\gamma_2$} \\\cline{1-2}
\multicolumn{1}{|l|}{$\gamma_3$} \\
\multicolumn{1}{|l|}{} & \multicolumn{1}{l}{\phantom{$\gamma_1$}} & \multicolumn{1}{l}{\phantom{$\gamma_1$}} & \multicolumn{1}{l}{\phantom{$\gamma_1$}} \\\cline{1-1}
\end{tabular}
\end{minipage}
\begin{minipage}[h]{0.20\textwidth}
\begin{tabular}{|llll}
\hline
\multicolumn{1}{|l|}{} & \multicolumn{1}{l|}{$\gamma_3$}  & \multicolumn{2}{c|}{$\gamma_1$} \\ \cline{1-1}\cline{3-4} 
\multicolumn{1}{|l|}{} &  \multicolumn{1}{c|}{} \\\cline{1-2}
\multicolumn{1}{|l|}{} \\\cline{1-1}
\multicolumn{1}{|l|}{$\gamma_2$} & \multicolumn{1}{l}{\phantom{$\gamma_1$}} & \multicolumn{1}{l}{\phantom{$\gamma_1$}} & \multicolumn{1}{l}{\phantom{$\gamma_1$}} \\\cline{1-1}
\end{tabular}
\end{minipage}
\begin{minipage}[h]{0.20\textwidth}
\begin{tabular}{|llll}
\hline
\multicolumn{1}{|l|}{} & \multicolumn{1}{l|}{}  & \multicolumn{1}{l|}{}& \multicolumn{1}{l|}{$\gamma_2$} \\ \cline{1-4} 
\multicolumn{2}{|c|}{$\gamma_3$} \\\cline{1-2}
\multicolumn{1}{|l|}{$\gamma_1$} \\
\multicolumn{1}{|l|}{} & \multicolumn{1}{l}{\phantom{$\gamma_1$}} & \multicolumn{1}{l}{\phantom{$\gamma_1$}} & \multicolumn{1}{l}{\phantom{$\gamma_1$}} \\\cline{1-1}
\end{tabular}
\end{minipage}
\begin{minipage}[h]{0.20\textwidth}
\begin{tabular}{|llll}
\hline
\multicolumn{1}{|l|}{} & \multicolumn{1}{l|}{}  & \multicolumn{2}{c|}{$\gamma_3$} \\ \cline{1-4} 
\multicolumn{1}{|l|}{} &  \multicolumn{1}{c|}{$\gamma_2$} \\\cline{1-2}
\multicolumn{1}{|l|}{$\gamma_1$} \\
\multicolumn{1}{|l|}{} & \multicolumn{1}{l}{\phantom{$\gamma_1$}} & \multicolumn{1}{l}{\phantom{$\gamma_1$}} & \multicolumn{1}{l}{\phantom{$\gamma_1$}} \\\cline{1-1}
\end{tabular}
\end{minipage}}
\caption{The Young diagrams corresponding to the border strip tableaux of type~$(2,1,2)$ within~${\lambda=(4,2,1,1)}$.}
\end{centering}
\end{figure}
\renewcommand{\arraystretch}{1}

The \emph{Murnaghan--Nakayama rule} (recursively) expresses the characters of the symmetric groups in terms the heights of border strip tableau.  Namely, if~$\rho'\subseteq \rho$ (both~$\rho'$ and~$\rho$ considered as multisets)
\[\chi^\lambda(\rho) \= \sum_{\gamma \in \BST(\lambda,\rho')} (-1)^{\height(\gamma)} \chi^{\lambda\backslash \gamma}(\rho-\rho'),\]
where~$\rho-\rho'$ denotes the difference of (multi)sets.
Of particular interest are the cases~$\rho'=\rho$ and~$\rho'=(\rho_1)$, yielding a direct or recursive combinatorial formula for~$\chi^\lambda(\rho)$ respectively:
\[\chi^\lambda(\rho)\=\sum_{\gamma \in \BST(\lambda,\rho)} (-1)^{\height(\gamma)} \qquad\text{and}\qquad \chi^\lambda(\rho) = \sum_{|\gamma|=\rho_1} (-1)^{\height(\gamma)} \,\chi^{\lambda\backslash \gamma}(\rho_2,\rho_3,\ldots),
\]
where the latter sum is over all borders strips~$\gamma$ of~$\lambda$ of length~$\rho_1$. The \emph{skew character}~$\chi^{\lambda/\nu}(\rho')$ is defined by ($|\lambda/\nu|=|\rho'|$)
\begin{align}\label{def:skewchar} \chi^{\lambda/\nu}(\rho')\= \sum_{\substack{\gamma \in \BST(\lambda/\nu,\rho')}} (-1)^{\height(\gamma)}, \end{align}
so that
\[\chi^\lambda(\rho) \= \sum_{|\nu|=|\rho'|}\chi^{\lambda/\nu}(\rho')\,\chi^{\nu}(\rho-\rho').\]
To conclude, we have the following definitions of the hook-length moments:
\begin{defn}\label{defn:hlm} The hook-length moments~$H_k$ ($k\geq 2$ even) are defined by either of the following equivalent definitions \cite[Section 13]{CMZ16}:
\begin{enumerate}[\upshape (i)]
\item $\displaystyle H_k(\lambda) \= -\frac{B_k}{2k} + \sum_{\xi \in Y_\lambda} h(\xi)^{k-2}\,;$
\item\label{it:X} $\displaystyle H_k(\lambda) \= -\frac{B_k}{2k} + \sum_{m=1}^\infty |\!\BST(\lambda,m)|\, m^{k-2}\,;$
\item $\displaystyle H_k \= \frac{1}{2}\sum_{i=0}^{k} \binom{k-2}{i-1}(-1)^i Q_i\,Q_{k-i}\,;$
\item $\displaystyle H_k \= \moller(S_k).$
\end{enumerate}
\end{defn}

\paragraph{Border strip moments}
The hook-length moments are M\"oller transformations of the~$S_k$. In this section we study the M\"oller transformation of the algebra~$\TA$, which contains the vector space spanned by all the~$S_k$. In order to do so, we express elements of~$\TA$ in terms of functions~$U_{\vec{k},\vec{l}}$ for which the induced product and M\"oller transformation are easy to compute. However, these function do not admit the property that the~$q$-bracket is quasimodular if~$k_i+l_i$ is even for all~$i$: each~$U_{\vec{k},\vec{l}}$ lies in the space generated by all the~$T_{\vec{k},\vec{l}}$ (possibly with~$k_i+l_i$ odd). 

Let 
\[\n(\vec{l}) \= \{(\underbrace{m_1,\ldots, m_1}_{l_1},\underbrace{m_2,\ldots,m_2}_{l_2},\ldots) \mid m_i\geq 1\}\]
the set of tuples of~$n:=|\vec{l}|$ positive integers, where the first~$l_1$, the second~$l_2$, etc. integers agree. 
For $\vec{k}\in \z_{\geq 0}^n$, define
\[U_{\vec{k},\vec{l}} \= \sum_{\vec{m}\in \n(\vec{l})} \vec{m}^{\vec{k}}\prod_{a=1}^\infty \binom{r_a(\lambda)}{r_a(\vec{m})}.\]
Observe that this product converges since~$r_a(\vec{m})=0$ for all but finitely many values of~$a$. Let~$\mathcal{U}$ be the algebra generated by the~$U_{\vec{k},\vec{l}}$. 

Generalise the hook-length moments in \cref{defn:hlm}(\ref{it:X}) by the following notion:
\begin{defn} The \emph{border strip moments} are given by
\[X_{\vec{k},\vec{l}}(\lambda) \= \sum_{\vec{m}\in \n(\vec{l})}\sum_{\gamma \in \BST(\lambda,\vec{m})} \frac{\chi^{\gamma}(\vec{m})^2}{z_{\vec{m}}}\, \vec{m}^{\vec{k}}\, .\]
Let~$\mathcal{X}$ be the vector space spanned by all the~$X_{\vec{k},\vec{l}}$. Define a filtration on~$\mathcal{X}$ by assigning to~$X_{\vec{k},\vec{l}}$ degree~$|\vec{k}|+|\vec{l}|$. 
\end{defn}
\begin{remark}Observe that for~$n=1$ and~$l=1$, the sum restricts to a sum over all border strips~$\gamma$ of~$\lambda$ and for such a border strip~$\gamma$ the factor~$\chi^{\gamma}(\vec{m})^2$ equals~$1$ and~$z_m$ equals~$m$. As the set of hook-lengths is in bijection with the set of all border strip lengths, one has that~$-\frac{B_k}{2k}+X_{k,1} = H_{k+1}$. 
\end{remark}

Denote by~$\stirling{n}{j}$ the Stirling numbers of the second kind (i.e., the number of elements in~$\Pi(n)$ of length~$j$). 
\begin{prop}\label{prop:Hkl} For all~$k\geq 0, l\geq 1, \vec{k},\vec{k}'\in \z_{\geq 0}^n,$ and integer vectors $\vec{l},\vec{l}'$ with $|\vec{l}|=|\vec{l}'|=n,$ one has
\begin{enumerate}[\upshape(i)]
\item\label{it:H1}~$\displaystyle T_{k,l} \= -\frac{B_{k+l}}{2(k+l)}(\delta_{l,1}+\delta_{k,0})+\sum_{j=1}^l \stirling{l}{j}(j-1)! \,U_{k,j}\, ;$
\item\label{it:H2}~$\displaystyle U_{\vec{k},\vec{l}}\blok U_{\vec{k}',\vec{l}'} \= U_{\vec{k}\cup \vec{k}',\vec{l}\cup \vec{l}'}\, ;$
\item\label{it:H3}~$\displaystyle \moller(U_{\vec{k},\vec{l}}) \= X_{\vec{k},\vec{l}}\, .$
\end{enumerate}
\end{prop}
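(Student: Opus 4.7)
For part (i), I would first observe that when $\vec l=(j)$ the set $\n(\vec l)$ reduces to the constant tuples $(m,\ldots,m)$ of length $j$, whence $\prod_a\binom{r_a(\lambda)}{r_a(\vec m)}$ collapses to $\binom{r_m(\lambda)}{j}$ and $U_{k,j}(\lambda)=\sum_{m\geq 1}m^k\binom{r_m(\lambda)}{j}$. Comparing the coefficient of $m^k$ on both sides of the claim, (i) thus reduces to the polynomial identity $\Faulhaber_l(n)=\sum_{j=1}^l\stirling{l}{j}(j-1)!\binom{n}{j}$, which I would derive from the classical expansion $x^l=\sum_j\stirling{l}{j}x^{\underline{j}}$ by dividing by $x$ to get $x^{l-1}=\sum_j\stirling{l}{j}(j-1)!\binom{x-1}{j-1}$ and then summing from $x=1$ to $n$ via the hockey-stick identity $\sum_{i=1}^n\binom{i-1}{j-1}=\binom{n}{j}$.

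For part (ii), I would pass to the $\vec u$-bracket and invoke~\cref{def:inducedprod}. Using $\sum_{r\geq 0}\binom{r}{s}u^r=u^s/(1-u)^{s+1}$ together with $\sum_{\lambda}u_\lambda=\prod_a(1-u_a)^{-1}$, a straightforward calculation yields
\[\langle U_{\vec k,\vec l}\rangle_{\vec u}\=\sum_{\vec m\in\n(\vec l)}\vec m^{\vec k}\prod_a v_a^{r_a(\vec m)},\quad v_a\defis\frac{u_a}{1-u_a}.\]
Parametrising $\vec m\in\n(\vec l)$ by its block values $(\mu_1,\ldots,\mu_s)\in\n^s$ factorises this as $\prod_{i=1}^s\bigl(\sum_{m\geq 1}m^{K_i}v_m^{l_i}\bigr)$, where $K_i$ is the sum of the corresponding entries of $\vec k$ over the $i$-th constant block of $\vec l$. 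The $\vec u$-bracket of $U_{\vec k\cup\vec k',\vec l\cup\vec l'}$ is manifestly the product of the two such factorisations, so by \cref{def:inducedprod} and injectivity of $f\mapsto\langle f\rangle_{\vec u}$, part (ii) follows.

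For part (iii), I would expand $\moller(U_{\vec k,\vec l})(\lambda)=\sum_{\rho}z_\rho^{-1}\chi^\lambda(\rho)^2\,U_{\vec k,\vec l}(\rho)$, interchange the sums over $\rho$ and $\vec m$, and apply the elementary identity $z_\rho^{-1}\prod_a\binom{r_a(\rho)}{r_a(\vec m)}=(z_{\vec m}z_\sigma)^{-1}$ (valid when $\vec m\subseteq\rho$ as multisets, with $\sigma:=\rho-\vec m$) to rewrite the inner sum as $z_{\vec m}^{-1}\sum_{\sigma\vdash|\lambda|-|\vec m|}z_\sigma^{-1}\chi^\lambda(\vec m\cup\sigma)^2$. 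Next I would apply Murnaghan--Nakayama $\chi^\lambda(\vec m\cup\sigma)=\sum_{\nu\vdash|\sigma|}\chi^{\lambda/\nu}(\vec m)\chi^\nu(\sigma)$, square, and use column orthogonality $\sum_\sigma z_\sigma^{-1}\chi^{\nu_1}(\sigma)\chi^{\nu_2}(\sigma)=\delta_{\nu_1,\nu_2}$ to collapse the $\sigma$-sum to $\sum_\nu\chi^{\lambda/\nu}(\vec m)^2$. Finally, substituting the BST expansion~\eqref{def:skewchar} for each skew character and using the decomposition $\BST(\lambda,\vec m)=\bigsqcup_\nu\BST(\lambda/\nu,\vec m)$ recasts the result as $z_{\vec m}^{-1}\sum_{\gamma\in\BST(\lambda,\vec m)}\chi^\gamma(\vec m)^2$, which is $X_{\vec k,\vec l}(\lambda)$.

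The main obstacle is (iii): parts (i) and (ii) are essentially reorganisations of bracket definitions and standard Stirling/binomial identities, but (iii) requires chaining Murnaghan--Nakayama, character orthogonality, and the BST description of skew characters in precisely the right order. I expect particular care will be needed to keep track of the size constraints on $\sigma$ and $\nu$ so that the orthogonality relation for $S_{|\lambda|-|\vec m|}$ applies cleanly, and to justify that the final expression $\sum_\nu\chi^{\lambda/\nu}(\vec m)^2$ really matches the $\chi^\gamma(\vec m)^2$ that appears in the definition of $X_{\vec k,\vec l}$.
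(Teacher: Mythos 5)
Your parts (i) and (iii) follow essentially the paper's route; your part (ii) is genuinely different. For (i) the paper uses the same Stirling identity $x^{l-1}=\sum_j\stirling{l}{j}(j-1)!\binom{x-1}{j-1}$, but concludes via uniqueness of the discrete antiderivative with vanishing constant term ($\partial\Faulhaber_l(x)=x^{l-1}$, $\partial\binom{x}{j}=\binom{x-1}{j-1}$) rather than your hockey-stick summation; the content is identical. For (ii) the paper never computes $\langle U_{\vec{k},\vec{l}}\rangle_{\vec{u}}$: it proves the convolution identity $\partial\binom{x}{i}\conv\binom{x}{j}=\binom{x}{i+j}$ and then invokes \cref{lem:disjointsupport}, which turns induced products of functions supported on a common singleton into convolutions and splits disjoint supports into pointwise products. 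Your generating-function argument (with $v_a=u_a/(1-u_a)$, the block factorisation, and injectivity of the $\vec{u}$-bracket together with \cref{def:inducedprod}) is correct and arguably cleaner; the paper's route has the advantage of staying inside $\QtoP$ and reusing the support machinery it already built for \cref{prop:computephi}.

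The step you should repair is the very end of (iii). Your chain (multiset identity $z_\rho^{-1}\prod_a\binom{r_a(\rho)}{r_a(\vec{m})}=(z_{\vec{m}}z_\sigma)^{-1}$, Murnaghan--Nakayama factorisation, orthogonality) is exactly the paper's and correctly lands on
\[\moller(U_{\vec{k},\vec{l}})(\lambda)\=\sum_{\vec{m}\in\n(\vec{l})}\frac{\vec{m}^{\vec{k}}}{z_{\vec{m}}}\sum_{\nu}\chi^{\lambda/\nu}(\vec{m})^2,\]
the inner sum running over partitions $\nu\subseteq\lambda$ with $|\lambda/\nu|=|\vec{m}|$. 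But the proposed recasting via $\BST(\lambda,\vec{m})=\bigsqcup_\nu\BST(\lambda/\nu,\vec{m})$ is not an identity: replacing $\sum_\nu\chi^{\lambda/\nu}(\vec{m})^2$ by $\sum_{\gamma\in\BST(\lambda,\vec{m})}\chi^{Y_\gamma}(\vec{m})^2$ would weight each skew shape $\lambda/\nu$ by the multiplicity $\#\BST(\lambda/\nu,\vec{m})$, which can exceed one, while expanding just one factor of $\chi^{\lambda/\nu}(\vec{m})$ via \eqref{def:skewchar} produces the signed sum $\sum_\gamma(-1)^{\height(\gamma)}\chi^{Y_\gamma}(\vec{m})$, not a sum of squares. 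The paper's own proof stops at the displayed expression: its closing sum (over what it denotes $\mathrm{BS}(\lambda,\vec{m})$) is over the skew shapes obtained by removing $|\vec{m}|$ boxes, each counted once, and that is how the $\gamma$-sum in the definition of $X_{\vec{k},\vec{l}}$ is to be read. So end your argument at the display above and identify it with $X_{\vec{k},\vec{l}}(\lambda)$ under that reading, rather than re-expanding into tableaux; as written, the last sentence of your (iii) asserts a false identity whenever some $\lambda/\nu$ admits more than one border strip tableau of type $\vec{m}$.
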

\begin{proof}
For the first property, we use the known identity
\[x^{l-1} \= \sum_{j=1}^{l} \stirling{l}{j}(j-1)!\binom{x-1}{j-1}.\]
As~$\Faulhaber_l(x)$ and~$\binom{x}{j}$ are the unique polynomials with constant term equal to zero and such that~$\partial \Faulhaber_l(x)=x^{l-1}$ and~$\partial \binom{x}{j} = \binom{x-1}{j-1}$ respectively, we find
\[\Faulhaber_l(x) \=  \sum_{j=1}^{l} \stirling{l}{j}(j-1)!\binom{x}{j},\]
which yields property~(\ref{it:H1}). 	

Next, we show that for all~$i,j\geq 0$ one has
\[\partial\binom{x}{i} \conv \binom{x}{j} \= \binom{x}{i+j}.\]
Both
\[\binom{x}{i} \conv \binom{x}{j} \= \sum_{m=0}^{x}\binom{m}{i} \binom{x-m}{j} \quad\quad\text{and}\quad\quad \binom{x+1}{i+j+1}\]
are polynomials of degree at most~$i+j+1$ taking the value~$0$ for~$x=0,1,\ldots,i+j-1$ and the value~$1$ for~$x=i+j$; hence, they are equal. Therefore,
\[\partial \binom{x}{i} \conv \binom{x}{j} \= \partial \binom{x+1}{i+j+1}=\binom{x}{i+j}.\]
By \cref{lem:disjointsupport} property~(\ref{it:H2}) follows. 

Finally, we have that
\begin{align}\label{eq:mollerofh}\moller(U_{\vec{k},\vec{l}})(\lambda) \= \sum_{\vec{m}\in \n(\vec{l})}\vec{m}^{\vec{k}} \sum_{\nu\vdash n}z_\nu^{-1}\chi^\lambda(\nu)^2  \prod_{a=1}^\infty \binom{r_a(\nu)}{r_a(\vec{m})}.\end{align}
Observe that given~$\vec{m}$ and~$\nu$ the term
\begin{align}\label{eq:mollerterm} z_\nu^{-1}\chi^\lambda(\nu)^2  \prod_{a=1}^\infty \binom{r_a(\nu)}{r_a(\vec{m})}\end{align}
 vanishes unless~$r_{a}(\nu)\geq r_a(\vec{m})$ for all positive~$a$. Let~$\nu'$ be the partition obtained from~$\nu$ by removing~$r_a(\vec{m})$ parts of size~$a$ from~$\nu$ for all positive~$a$. Denote by~$n'=n-|\vec{m}|$ the size of~$\nu'$. By the Murnaghan--Nakayama rule one has
\[\chi^\lambda(\nu) \= \sum_{\xi\in \mathrm{BS}(\lambda,\vec{m})} \chi^{\xi}(\vec{m}) \, \chi^{\lambda\backslash\xi}(\nu').\]
One has
\[z_\nu^{-1}\prod_{a=1}^\infty \binom{r_{a}(\nu)}{r_{a}(\vec{m})}
\=\prod_{a=1}^\infty \frac{1}{a^{r_a(\nu)}r_a(\vec{m})!\,(r_a(\nu)-r_a(\vec{m})) !}
\=\frac{1}{z_{\nu'}z_{\vec{m}}}. \]
Hence,~(\ref{eq:mollerterm}) equals
\[\sum_{\xi\in \mathrm{BS}(\lambda,\vec{m})}\sum_{\rho\in \mathrm{BS}(\lambda,\vec{m})} \frac{\chi^{\xi}(\vec{m})\,\chi^{\rho}(\vec{m})}{z_{\vec{m}}} \sum_{\nu'\vdash n'} z_{\nu'}  \chi^{\lambda\backslash\xi}(\nu')\,\chi^{\lambda\backslash\rho}(\nu').\]
The orthogonality relation for the symmetric group is the statement 
\[\sum_{\nu'\vdash n'} z_{\nu'}  \,\chi^{\lambda\backslash\xi}(\nu')\,\chi^{\lambda\backslash\rho}(\nu')\=\delta_{\lambda\backslash\xi,\lambda\backslash\rho}\, .\]
Hence, we obtain the desired result. 
\end{proof}

The~$q$-bracket of an element in~$\mathcal{X}$ is not necesarily a quasimodular form. However, it always lies in the following space of~$q$-analogues of zeta values, see \cite{GKZ06}. 
\begin{defn}\label{defn:combeis}
Let~$\mathcal{C}_{\leq \ell}$ be the~$\q$-vector space consisting of all polynomials in the \emph{combinatorial Eisenstein series}
\[G_k(\tau) \= -\frac{B_k}{2k}+\sum_{r=1}^\infty\sum_{m=1}^\infty m^{k-1} q^{mr},  \quad\quad (k\geq 1, \text{ not necesarily even})\]
\emph{and their derivatives} of weight~$\leq \ell$, where to~$D^rG_k$ we assign the weight~$k+2r$. 
\end{defn}
Now, \cref{prop:Hkl} implies the following result:
\begin{thm}
For all~$f\in \mathcal{X}_{\leq k}$, one has~$\langle f \rangle_q \in \mathcal{C}_{\leq k}.$
\end{thm}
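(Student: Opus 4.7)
The plan is to pull the $q$-bracket of the border strip moments back through the M\"oller transform and then reduce the question to an explicit computation on the generators of $\mathcal U$. I would first note the sharper fact (implicit in the passage on $\mathcal M$) that the M\"oller transform preserves the $q$-bracket itself: for every $f \in \QtoP$,
\[
\langle \mathcal{M}(f)\rangle_q \,=\, \langle f\rangle_q.
\]
This is immediate from the column orthogonality $\sum_{\lambda\vdash n}\chi^\lambda(\nu)^2 = z_\nu$, since summing $\mathcal{M}(f)(\lambda)$ over $\lambda\vdash n$ collapses to $\sum_{\nu\vdash n}f(\nu)$, so the numerators in the two $q$-brackets coincide. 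Combined with Proposition~\ref{prop:Hkl}(\ref{it:H3}), this reduces the theorem to showing that $\langle U_{\vec k,\vec l}\rangle_q \in \mathcal{C}_{\leq |\vec k|+|\vec l|}$ for every generator of $\mathcal{U}$.

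Next I would iterate the multiplicativity $U_{\vec k,\vec l}\blok U_{\vec k',\vec l'} = U_{\vec k\cup\vec k',\,\vec l\cup\vec l'}$ of Proposition~\ref{prop:Hkl}(\ref{it:H2}) together with the homomorphism property of the $q$-bracket with respect to $\blok$ (Proposition~\ref{prop:prod}) to obtain
\[
\langle U_{\vec k,\vec l}\rangle_q \,=\, \prod_i \langle U_{k_i,l_i}\rangle_q,
\]
where the product runs over the blocks of $\vec l$. It therefore suffices to handle a single factor $U_{k,l}(\lambda) = \sum_{m\geq 1} m^{k}\binom{r_m(\lambda)}{l}$ and show $\langle U_{k,l}\rangle_q \in \mathcal{C}_{\leq k+l}$.

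Applying Proposition~\ref{prop:phibrac} to $f(r) = \binom{r}{l}$, for which $\partial f(r) = \binom{r-1}{l-1}$, and then specializing $u_i = q^i$ yields
\[
\langle U_{k,l}\rangle_q \,=\, \sum_{m,r\geq 1} m^{k}\binom{r-1}{l-1}q^{rm}.
\]
Expanding $\binom{r-1}{l-1}$ as a polynomial in $r$ of degree $l-1$ rewrites this as a $\q$-linear combination of the sums $\sum_{m,r\geq 1} m^{k} r^{i} q^{rm}$ for $0\leq i\leq l-1$. Using the identity $D^{i} G_{a} = \sum m^{a-1+i}r^{i}q^{rm}$, such a sum equals $D^{i}G_{k-i+1}$ when $k\geq i$, and by the $m\leftrightarrow r$ symmetry equals $D^{k}G_{i-k+1}$ when $i\geq k$, in each case up to an additive constant arising from the $-B_{k}/(2k)$ term in $G_{k}$ (which itself lies in $\mathcal{C}$ by Definition~\ref{defn:combeis}, since that definition admits all combinatorial Eisenstein series $G_{k}$ with $k\geq 1$ as well as rational constants). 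Either way, every contribution is a derivative of a combinatorial Eisenstein series of weight $k+i+1 \leq k+l$, so it lies in $\mathcal{C}_{\leq k+l}$. Taking products and invoking the first step by linearity in $f\in\mathcal{X}_{\leq k}$ then gives the conclusion.

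The main obstacle is bookkeeping rather than conceptual: one has to verify that the $-B_k/(2k)$ constants are correctly absorbed in every conversion $\sum_{m,r}m^{a}r^{b}q^{rm}\leftrightarrow D^{\min(a,b)}G_{|a-b|+1}$, particularly in the edge cases $k=0$ or $i=k$ where $G_{1}$ and $D^{k}G_{1}$ show up, and that the weight bounds $\sum_{i}(k_i+l_i) = |\vec k|+|\vec l|$ across the factorization in Proposition~\ref{prop:Hkl}(\ref{it:H2}) are consistent with the filtration assigning degree $|\vec k|+|\vec l|$ to $X_{\vec k,\vec l}$.
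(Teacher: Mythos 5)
Your proposal is correct and takes essentially the same route as the paper: reduce via $X_{\vec{k},\vec{l}}=\moller(U_{\vec{k},\vec{l}})$ and the fact that the M\"oller transform preserves the $q$-bracket (column orthogonality), then use multiplicativity of the $q$-bracket with respect to $\blok$ and a generator-level computation whose terms $\sum_{m,r}m^k r^i q^{mr}$ have weight at most $k+l$. The only difference is cosmetic: you evaluate $\langle U_{k,l}\rangle_q$ directly from \cref{prop:phibrac} with $f(r)=\binom{r}{l}$, whereas the paper first rewrites the $U$'s in terms of the $T_{k,l}$ (with $k+l$ possibly odd) via \cref{prop:Hkl} and invokes the earlier quasimodularity computations ``mutatis mutandis''; your explicit computation is a valid, and if anything more self-contained, instantiation of that step.
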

\begin{proof}
By \cref{prop:Hkl},~$f$ equals the M\"oller transform of some polynomial in the~$T_{k,l}$ with respect to the product~$\blok$. Here, however, it may happen that~$k+l$ is odd. Mutatis mutandis in either of three approaches in \S\S\ref{sec:3proofs}, we find that the~$q$-bracket of~$T_{k,l}$ lies in~$\mathcal{C}_{k+l}$, which proves the result. 
\end{proof}

\begin{thm} For all weights~$k$ one has~$\moller (\TA_k)\subset \mathcal{X}_{\leq k}$.  More precisely, 
\begin{align}\label{eq:molthm} \frac{\moller (T_{k_1,l_1}\blok\cdots \blok T_{k_n,l_n})}{(l_1-1)!\cdots(l_n-1)!} \= X_{\vec{k},\vec{l}} + \text{elements in } \mathcal{X} \text{ of lower degree}.\end{align}
\end{thm}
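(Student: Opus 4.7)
The plan is to unfold the induced product using Proposition~\ref{prop:Hkl}, then apply $\moller$ term-by-term, keeping careful track of degrees. By Proposition~\ref{prop:Hkl}(i), we can write
\[
T_{k_i,l_i} \= c_i \+ \sum_{j_i=1}^{l_i} \stirling{l_i}{j_i}(j_i-1)!\, U_{k_i,j_i}, \qquad c_i \defis -\frac{B_{k_i+l_i}}{2(k_i+l_i)}(\delta_{l_i,1}+\delta_{k_i,0}).
\]
First I would expand $T_{k_1,l_1}\blok \cdots \blok T_{k_n,l_n}$ by distributivity, splitting each factor into a constant part (contributing the scalar $c_i$, since $1$ is the identity for $\blok$) and a linear combination of~$U_{k_i,j_i}$'s. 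By Proposition~\ref{prop:Hkl}(ii), the induced product of the $U$-terms concatenates the index vectors, so the expansion reads
\[
T_{k_1,l_1}\blok\cdots\blok T_{k_n,l_n} \;=\; \sum_{S\subseteq[n]} \Bigl(\prod_{i\notin S} c_i\Bigr)\sum_{\substack{(j_i)_{i\in S}\\ 1\leq j_i\leq l_i}} \Bigl(\prod_{i\in S}\stirling{l_i}{j_i}(j_i-1)!\Bigr)\, U_{\vec{k}_S,\vec{j}_S},
\]
where $\vec{k}_S,\vec{j}_S$ denote the restrictions to the coordinates indexed by $S$.

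Next, apply the M\"oller transform, which is linear, and use Proposition~\ref{prop:Hkl}(iii) to replace each $U_{\vec{k}_S,\vec{j}_S}$ by $X_{\vec{k}_S,\vec{j}_S}$. The leading term on the right-hand side comes from $S=[n]$ with $j_i=l_i$ for all $i$; using $\stirling{l_i}{l_i}=1$, its coefficient is $\prod_i(l_i-1)!$, so dividing through by this product yields $X_{\vec{k},\vec{l}}$ plus a remainder.

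It remains to verify that the remainder consists of $X_{\vec{k}',\vec{l}'}$ with strictly smaller degree $|\vec{k}'|+|\vec{l}'|<|\vec{k}|+|\vec{l}|$. If $S=[n]$ but some $j_i<l_i$, then the degree is $|\vec{k}|+\sum_i j_i<|\vec{k}|+|\vec{l}|$, done. If $S\subsetneq[n]$, then the $c_i$ for $i\notin S$ are nonzero only when $l_i=1$ or $k_i=0$; combined with the parity constraint $k_i+l_i\in 2\z$ this forces $k_i+l_i\geq 2$, so the degree $|\vec{k}_S|+|\vec{j}_S|\leq|\vec{k}_S|+|\vec{l}_S|$ is strictly less than $|\vec{k}|+|\vec{l}|$. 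This establishes~(\ref{eq:molthm}).

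For the inclusion $\moller(\TA_k)\subset \mathcal{X}_{\leq k}$, by Theorem~\ref{thm:basis} the space $\TA_k$ is spanned by induced products $T_{k_1,l_1}\blok\cdots\blok T_{k_n,l_n}$ with $\sum_i(k_i+l_i)=k$, and (\ref{eq:molthm}) shows the M\"oller transform of each such generator lies in $\mathcal{X}_{\leq k}$. The main obstacle is really just the degree bookkeeping for the lower-order terms, and in particular checking that the constant contributions $c_i$ genuinely cause a strict drop in degree --- which works out precisely because the parity condition $k_i+l_i\in 2\z$ rules out the degenerate case $k_i+l_i<2$.
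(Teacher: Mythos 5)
Your proposal is correct and is essentially the paper's own argument, spelled out in more detail: the paper likewise combines Proposition~\ref{prop:Hkl}(i)--(iii), observing that $T_{k,l}-(l-1)!\,U_{k,l}$ has M\"oller transform of degree strictly smaller than $k+l$, so that the induced products of the $U$'s concatenate and map to the $X$'s with the stated leading coefficient. Your degree bookkeeping for the constant terms $c_i$ is fine (indeed, $l_i\geq 1$ alone already forces the strict drop, so the parity appeal is not even needed).
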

\begin{proof}
Observe that \cref{prop:Hkl} implies that $\moller (\TA_k)\subset \mathcal{X}_{\leq k}$. Equation~(\ref{eq:molthm}) follows from this proposition after noting that the M\"oller transformation of $T_{k,l} - (l-1)! U_{k,l}$ has degree strictly smaller than~$k+l$. 
\end{proof}
\begin{exmp}
The following two equations provide examples of linear combinations of elements of~$\mathcal{X}$ with a quasimodular~$q$-bracket whenever~${k+l}$ and~$k_i$ are even integers. 
\begin{align}\moller(T_{k,l}) &\= -\frac{B_{k+l}}{2(k+l)}(\delta_{k,1}+\delta_{l,0})+\sum_{j=1}^l \stirling{l}{j}(j-1)! X_{k,j}\, , \\
\moller(S_{k_1}\blok S_{k_2}\blok\cdots\blok S_{k_n}) &\= \sum_{A\subset [n]}\Bigl(\,\prod_{i\not \in A} \frac{B_{k_i}}{2k_i}\Bigr) X_{k_A,(1,1,\ldots,1)}\, . \end{align}
See \cref{sec:a} for a table of elements in~$\mathcal{X}$ with quasimodular~$q$-bracket and  of small degree. 
\end{exmp}

\begin{remark} In many examples the~$X_{\vec{k},\vec{l}}$ are not shifted symmetric functions or generated by shifted symmetric functions under the induced product. For example,~$\moller(T_{0,2}) \neq \moller(S_2)$ and besides~$Q_2=\moller(S_2)=S_2$ there are no other non-trivial functions generated by~$\Lambda^*$ under the pointwise product. It remains an open question whether the elements of~$\mathcal{X}$ are in some sense related to shifted symmetric functions. 
\end{remark}

\paragraph{Moments of other partition invariants}\label{sec:morepolynomials} 
So far we provided many examples of functions on partitions in~$\Lambda^*$ and~$\TA$ related to the representation theory of the symmetric group. Now, we see that many purely combinatorial notions lead to different bases for~$\SA$. We compare these bases to corresponding bases of~$\Lambda^*$. Most of these bases take the following form. Suppose an index set~$I$ and a sequence~$\{s_i\}_{i\in I}^\infty$ of elements of~$\q^\partitions$ are given. Then, we define the~$k$th moment of~$\vec{s}$ by (whenever this sum converges)
\[M_k(\vec{s})(\lambda) \= \sum_{i\in I} \Bigl(s_i(\lambda)^k-s_i(\emptyset)^k\Bigr).\]
For example, let the functions~$\vec{p},\vec{q}$ for the index set~$\n$ be given by
\begin{equation}
p_i(\lambda)=\lambda_i\, ,\quad\quad q_i(\lambda)=\lambda_i-i.
\end{equation}
Then, by definition,
\[S_k \= S_k(\emptyset)+M_{k-1}(\vec{p}),\quad\quad Q_k \= Q_k(\emptyset)+M_{k-1}(\vec{q}).\]
Note that by definition~$M_k(\vec{s})(\emptyset)=0$. As the functions below will not respect the weight grading anyway, we will not include a constant term. 

The sequences~$\vec{a},\vec{c},\vec{h},\vec{x}$ of functions on partitions are of further interest. Define these sequence, indexed by $\xi=(i,j)\in \z_{\geq 0}^2$, by~$0$ if~$\xi \not \in Y_\lambda$ and 
\begin{align}
a_\xi(\lambda)&:\ \text{arm length of } \xi & h_\xi(\lambda)&:\ \text{hook-length of } \xi
\\ 
x_\xi(\lambda)&\= i & c_\xi(\lambda)&:\ \text{content of } \xi,\text{ i.e., } i-j 
\end{align}
if~$\xi \in Y_\lambda$. For~$\vec{h}$ and~$\vec{c}$ it is known that the corresponding moment functions are shifted symmetric, for the latter see \cite[Theorem 4]{KO94}. The moment functions corresponding to~$\vec{a}$ and $\vec{x}$ turn out to be equal and to be elements of~$\SA$. 
\begin{thm}
\[\SA \= \q[M_k(\vec{a}) \mid k\geq 0 \text{ even}] \= \q[M_k(\vec{x}) \mid k\geq 	0 \text{ even}].\]
\end{thm}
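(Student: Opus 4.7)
The plan is to prove the two equalities in three steps. First, I would establish $M_k(\vec{a})=M_k(\vec{x})$ by a row-by-row reindexing: inside row~$j$ of~$Y_\lambda$, the column coordinates and the arm lengths of the cells in that row run through the same finite multiset of values, so their $k$th-power sums agree. Consequently
\[M_k(\vec{x})(\lambda) \= M_k(\vec{a})(\lambda) \= \sum_{j\geq 1} P_k(\lambda_j)\]
for a single polynomial $P_k$ of degree $k+1$ (essentially a Faulhaber polynomial) encoding the row sum.

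Second, I would show that for $k$ even one has $P_k\in\q[x,x^3,x^5,\ldots]$, so that each row contribution involves only odd powers of $\lambda_j$. With the cells indexed so that their $x$-coordinate runs through the half-integers $\{\tfrac12,\tfrac32,\ldots,\lambda_j-\tfrac12\}$ inside row $j$, the polynomial $P_k$ satisfies the telescoping recursion $P_k(n)-P_k(n-1)=(n-\tfrac12)^k$ together with $P_k(0)=0$. Telescoping from $0$ down to $-N$ yields $P_k(-N)=(-1)^{k+1}P_k(N)$ on positive integers, hence the polynomial identity $P_k(-x)=(-1)^{k+1}P_k(x)$. For $k$ even this reads $P_k(-x)=-P_k(x)$, so $P_k$ is odd, and summing over the parts gives
\[M_k(\vec{x}) \= \sum_{r=1}^{(k+2)/2} c_r \sum_{j\geq 1}\lambda_j^{2r-1} \= \sum_{r=1}^{(k+2)/2} c_r\,S_{2r}^0,\]
which lies in $\SA$ since each $S_{2r}^0=S_{2r}+B_{2r}/(4r)$ lies in $\SA$.

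Third, the leading coefficient of $P_{2k}$ equals $\tfrac1{2k+1}$, so $M_{2k}(\vec{x})=\tfrac{1}{2k+1}S_{2k+2}+(\text{elements of }\SA\text{ of strictly smaller weight})$. The change-of-basis matrix from $(S_2,S_4,\ldots,S_{2n+2})$ to $(M_0(\vec{x}),M_2(\vec{x}),\ldots,M_{2n}(\vec{x}))$ is therefore upper triangular with non-zero diagonal; inverting it expresses every $S_{2r}$ as a polynomial in the $M_{2j}(\vec{x})$. The same argument works with $\vec{a}$ in place of $\vec{x}$, and together with the first step this yields both equalities.

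The main obstacle is the parity analysis in the second step: the odd-polynomial conclusion depends on matching the half-integer shift implicit in the cell-indexing with the Bernoulli reflection $B_l(\tfrac12-x)=(-1)^l B_l(\tfrac12+x)$ at the odd index $l=k+1$. A naive integer-only expansion of $\sum_{m=0}^{n-1} m^k$ produces a contaminating $-\tfrac12 \lambda_j^k$ term, so one must be careful to use the convention in which $P_k$ comes out as a genuinely odd polynomial; only then do the power sums of even index disappear and does $M_k(\vec{x})$ land in $\SA$.
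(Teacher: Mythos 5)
Your outline is essentially the paper's own: both proofs reduce $M_k(\vec{a})$ and $M_k(\vec{x})$ row by row to a Faulhaber-type polynomial evaluated at the parts $\lambda_j$, then invoke a parity statement (for $k$ even only odd powers of $\lambda_j$ should survive, so the result lies in $\q[p_1,p_3,p_5,\ldots]=\SA$ where $p_r(\lambda)=\sum_i\lambda_i^r$), and finally use an (upper-)triangular change of basis to get the reverse inclusion --- explicit in your third step, implicit in the paper's appeal to the word ``basis''.

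The genuine problem is that your parity step is carried out for a different function than the one in the statement. The theorem's sequences are integer-valued: $x_\xi=i$ with $\xi=(i,j)\in\z_{\geq 0}^2$, and the arm lengths in row $j$ are $0,1,\ldots,\lambda_j-1$; your half-integer indexing $\{\tfrac12,\tfrac32,\ldots,\lambda_j-\tfrac12\}$ is a change of definition, not a choice of convention. Under the stated definitions the row sum is $\sum_{a=0}^{n-1}a^k$ (resp.\ $\sum_{i=1}^{n}i^k$), which is not an odd polynomial; concretely $M_2(\vec{a})=\tfrac13 p_3-\tfrac12 p_2+\tfrac16 p_1$ (and $+\tfrac12p_2$ for the $1$-based reading of $x_\xi$), and since the power sums are algebraically independent as functions on partitions, $p_2\notin\SA$, so this generator does not lie in $\SA$ and the argument cannot close; the ``contaminating $-\tfrac12\lambda_j^k$ term'' you mention is exactly this obstruction and does not disappear by being careful. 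Note also that your first step ($M_k(\vec{a})=M_k(\vec{x})$) uses integer multisets while your second step uses the shifted ones, so the two steps are not even consistent unless both sequences are shifted by $\tfrac12$. To be fair, the paper's own proof stumbles at the same point: it rests on the claim that the $\Faulhaber_l$ with $l$ odd form a basis of the odd polynomials, which is false ($\Faulhaber_3(x)=\tfrac{x^3}{3}+\tfrac{x^2}{2}+\tfrac{x}{6}$), so the literal statement with integer conventions is itself defective. Your half-integer device (equivalently, replacing the arm length $a(\xi)$ by $a(\xi)+\tfrac12$) is a reasonable repair and your telescoping/Bernoulli-reflection argument for the oddness of $P_k$ and the triangularity step are correct for that modified statement, but as written the proposal proves the repaired theorem, not the one stated.
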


\begin{proof}
As the Faulhaber polynomials~$\Faulhaber_k$ with~$k$ odd form a basis for the space of all odd polynomials, the functions  
\[\sum_{i=1}^\infty \Faulhaber_k(\lambda_i) \= \sum_{i=1}^\infty \sum_{a=1}^{\lambda_i} a^{k-1}\]
generate~$\SA$, which corresponds to the first equality in the statement. By interchanging the sums one obtains
\begin{align}\label{eq:kerov2}
\sum_{i=1}^\infty \Faulhaber_k(\lambda_i) \=\sum_{a=1}^\infty  a^{k-1}\sum_{m=a}^\infty r_m(\lambda)  \= \sum_{(i,j)\in Y_\lambda} i^{k-1}.
\end{align}
Hence, the result is also true for~$\vec{s}=\vec{x}$. 
\end{proof}

\begin{remark}
Note that for a given~$i$ the number of~$(i,j)\in Y_\lambda$ equals~$\lambda_i'$, where~$\lambda'$ is the conjugate partition of~$\lambda$. Hence,~(\ref{eq:kerov2}) can be written as
\[\sum_{i=1}^\infty i^{k-1}\lambda_i'\]
and consequently these functions for~$k$ odd generate~$\SA$. Note that these functions are different from the~$S_k(\lambda')$. In fact, the algebra generated by the~$S_k(\lambda')$ is distinct from the algebra~$\SA$, in contrast to the algebra of shifted symmetric functions, for which~$Q_k(\lambda')=(-1)^kQ_k(\lambda)$. 
\end{remark}

\paragraph*{Acknowledgment}
I am very grateful for the conversations with and feedback received from both my supervisors Gunther Cornelissen and Don Zagier.

\appendix
\section{Table of double moment functions up to weight~\texorpdfstring{$4$}{4}}\label{sec:a}
For all basis elements~$f\in\TA_{\leq 4}$ in the basis provided by \cref{thm:basis}, we compute its representation in the basis consisting of double moment function and the quantities~$\langle f\rangle_{\vec{u}}, \langle f \rangle_q, D(f), \mathfrak{d}(f)$ and~$\moller(f)$.

\subsection*{Weight at most~\texorpdfstring{$2$}{2}} \renewcommand{\arraystretch}{1.3}
\definecolor{orange}{rgb}{1,0.975,0.95}
\definecolor{greenish}{rgb}{0.95,0.97,1}
\rowcolors{1}{orange}{greenish}
$\displaystyle \arraycolsep=9pt
\begin{array}[h!]{l || l | l | l}
f&1 & T_{1,1} &T_{0,2} \\
\langle f \rangle_{\vec{u}} & 1 &  -\frac{1}{24}+\sum_{m,r\geq 1} m u_m^r & -\frac{1}{24}+\sum_{m,r\geq 1} r u_m^r\\
\langle f \rangle_q & 1 &  G_2 & G_2 \\
D(f) & 0 & T_{2,2} & T_{1,3}\\
\mathfrak{d}(f) & 0 &  -\frac{1}{2} &  -\frac{1}{2}\\
\moller(f) & 1 &  X_{1,1}-\frac{1}{24} & X_{0,2}+X_{0,1}-\frac{1}{24}
\end{array}$

\subsection*{Weight \texorpdfstring{$4$}{4}}
$\displaystyle \arraycolsep=9pt
\begin{array}{l || l | l | l | l }
f&T_{3,1} &T_{2,2} &  T_{1,3} & T_{0,4} \\
\langle f \rangle_{\vec{u}} & \frac{1}{240}+\sum m^3 u_m^r & \sum m^2r u_m^r & \sum m r^2 u_m^r &  \frac{1}{240}+\sum r^3 u_m^r \\
\langle f \rangle_q & G_4 & \frac{5}{6}G_4-2G_2^2 & \frac{5}{6}G_4-2G_2^2 & G_4 \\
D(f) & T_{4,2} & T_{3,3} & T_{2,4} & T_{1,5}\\
\mathfrak{d}(f) & 0& 2T_{1,1} & 2T_{0,2} & 0\\
\moller(f) &X_{3,1} +\frac{1}{240} &  X_{2,2} + X_{2,1} & 2X_{1,3} +3X_{1,2}+X_{1,1} & 6 X_{0,4} + 12 X_{0,3} + \\    \rowcolor{greenish}
		& 						&	 					& 							& 7X_{0,2}+ X_{0,1}+\frac{1}{240}
\end{array}$\vspace{5pt}

\noindent
$\displaystyle \arraycolsep=9pt
\begin{array}{l || l | l }  \rowcolor{orange}
f &  T_{1,1}\blok  T_{1,1} = &  T_{1,1} \blok T_{0,2} = \\ 
 & T_{1,1}^2-T_{2,2} & T_{1,1}T_{0,2}-T_{1,3}  \\
\langle f \rangle_{\vec{u}} & \sum m_1m_2 u_{m_1}^{r_1}u_{m_2}^{r_2}-\frac{1}{12}\sum mu_m^r+\frac{1}{576} & \sum m_1r_2 u_{m_1}^{r_1}u_{m_2}^{r_2}-\frac{1}{24}\sum(m+r)u_m^r+\frac{1}{576}\\
\langle f \rangle_q  & G_2^2 & G_2^2 \\
D(f) & 2\cdot T_{2,2}\blok T_{1,1} & T_{2,2}\blok T_{0,2}+T_{1,1}\blok T_{1,3} \\
\mathfrak{d}(f) & -T_{1,1} & -\frac{1}{2}T_{1,1}-\frac{1}{2}T_{0,2} \\
\moller(f) & 
X_{(1,1),(1,1)}-\frac{1}{12}X_{1,1}+ \frac{1}{576} 
& X_{(1,0),(1,2)}+X_{(1,0),(1,1)} + \\  \rowcolor{greenish}
& & -\frac{1}{24}\left(X_{1,1}+X_{0,2}\right)+\frac{1}{576} 
\end{array}$\vspace{5pt}

\noindent
$\displaystyle \arraycolsep=9pt
\begin{array}{l || l }  \rowcolor{orange}
f &  T_{0,2} \blok T_{0,2} = \\ 
& T_{0,2}^2-\frac{5}{6}T_{0,4}-\frac{1}{6}T_{0,2}-\frac{1}{288}	 \\
\langle f \rangle_{\vec{u}} & \sum r_1r_2 u_{m_1}^{r_1}u_{m_2}^{r_2}-\frac{1}{12}\sum ru_m^r+\frac{1}{576}\\
\langle f \rangle_q &  G_2^2 \\
D(f) & 2\cdot T_{1,3}\blok T_{0,2}\\
\mathfrak{d}(f)  & -T_{0,2} \\
\moller(f) & X_{(0,0),(2,2)}+X_{(0,0),(2,1)}+X_{(0,0),(1,2)}+\\  \rowcolor{greenish}
& X_{(0,0),(1,1)}-\frac{1}{12}X_{0,2}-\frac{1}{12}X_{0,1}+ \frac{1}{576} 
\end{array}$


\small

\end{document}